\newcommand{\ov}{\overline}
\newcommand{\id}{\textnormal{id}}
\newcommand{\mc}{\mathcal}
\newcommand{\mf}{\mathfrak}
\newcommand{\msf}{\mathsf}
\newcommand{\I}{\mathbbm{1}}
\newcommand{\vp}{\varphi}
\newcommand{\Lvp}{\Lambda_{\vp}}
\newcommand{\Lh}{\Lambda_{h}}
\newcommand{\Lpsi}{\Lambda_{\psi}}
\newcommand{\Lhvps}{\Lambda_{\wh{\psi}}}
\newcommand{\md}{\operatorname{d}\!}
\newcommand{\cst}{\ifmmode \mathrm{C}^* \else $\mathrm{C}^*$\fi}
\newcommand{\NN}{\mathbb{N}}
\newcommand{\RR}{\mathbb{R}}
\newcommand{\CC}{\mathbb{C}}
\newcommand{\ZZ}{\mathbb{Z}}
\newcommand{\GG}{\mathbb{G}}
\newcommand{\TT}{\mathbb{T}}
\newcommand{\bbGamma}{{\mathpalette\makebbGamma\relax}}
\newcommand{\makebbGamma}[2]{%
  \raisebox{\depth}{\scalebox{1}[-1]{$\mathsurround=0pt#1\mathbb{L}$}}%
}
\newcommand{\IrrGamma}{\Irr(\bbGamma)}
\newcommand{\mrW}{\mathrm{W}}
\newcommand{\mrV}{\mathrm{V}}
\newcommand{\ismaa}[2]{\langle#1\,|\,#2\rangle}
\newcommand{\wot}{\ifmmode \textsc{wot} \else \textsc{wot}\fi}
\newcommand{\sot}{\ifmmode \textsc{sot} \else \textsc{sot}\fi}
\newcommand{\sots}{\ifmmode \textsc{sot}^* \else \textsc{sot}$^*$\fi}
\newcommand{\ssot}{\ifmmode \sigma\textsc{-sot} \else $\sigma$-\textsc{sot }\fi}
\newcommand{\ssots}{\ifmmode \sigma\textsc{-sot}^* \else $\sigma$-\textsc{sot }$^*$\fi}
\newcommand{\swot}{\ifmmode \sigma\textsc{-wot} \else $\sigma$-\textsc{wot}\fi}
\newcommand{\Linf}{\operatorname{L}^{\infty}(\GG)}
\newcommand{\Linfd}{\operatorname{L}^{\infty}(\whG)}
\newcommand{\Lj}{\operatorname{L}^{1}(\GG)}
\newcommand{\CG}{\mathrm{C}_0(\GG)}
\newcommand{\CGD}{\mathrm{C}_0(\whG)}
\newcommand{\CGDu}{\mathrm{C}_0^{u}(\whG)}
\newcommand{\wh}{\widehat}
\newcommand{\whG}{\widehat{\GG}}
\newcommand{\hvp}{\widehat{\vp}}
\newcommand{\Lhvp}{\Lambda_{\hvp}}
\newcommand{\LdG}{\operatorname{L}^{2}(\GG)}
\newcommand{\IrrG}{\Irr(\GG)}
\newcommand{\hpsi}{\widehat{\psi}}
\newcommand{\SUd}{\operatorname{SU}_q(2)}
\newcommand{\WW}{\text{\reflectbox{$\mathds{W}$}}\:\!} 
\DeclareMathOperator{\lin}{span}
\DeclareMathOperator{\Irr}{Irr}
\DeclareMathOperator{\Pol}{Pol}
\DeclareMathOperator{\Tr}{Tr}
\DeclareMathOperator{\B}{B}
\DeclareMathOperator{\M}{M}
\DeclareMathOperator{\Dom}{Dom}
\DeclareMathOperator{\LL}{L}
\DeclareMathOperator{\HS}{HS}
\DeclareMathOperator{\Diag}{Diag}
\DeclareMathOperator{\diag}{diag}
\DeclareMathOperator{\sgn}{sgn}
\newtheorem{theorem}{Theorem}[section]
\newtheorem{proposition}[theorem]{Proposition}
\newtheorem{lemma}[theorem]{Lemma}
\theoremstyle{definition}
\newtheorem{corollary}[theorem]{Corollary}
\newtheorem{remark}[theorem]{Remark}
\newtheorem{definition}[theorem]{Definition}
\numberwithin{equation}{section}
\title{Modular properties of type I locally compact quantum groups}
\author{Jacek Krajczok\thanks{Email address: jkrajczok@impan.pl}\\ Institute of Mathematics, Polish Academy of Sciences}
\date{}
\begin{document}
\maketitle

\begin{abstract}
The following paper is devoted to the study of type I locally compact quantum groups. We show how various operators related to the modular theory of the Haar integrals on $\GG$ and $\whG$ act on the level of direct integrals. Using these results we derive a web of implications between properties such as unimodularity or traciality of the Haar integrals. We also study in detail two examples: discrete quantum group $\wh{\SUd}$ and the quantum $az+b$ group.
\end{abstract}

\section{Introduction}
A remarkable feature of the theory of compact quantum groups introduced by Woronowicz (\cite{Woronowiczsu2, cqg}) is the fact that the Haar integral need not be tracial (in such case one says that a compact quantum group $\GG$ is not of Kac type). Whether $\GG$ is of Kac type or not, is related to a number of other properties. To name a few, the Haar integral of $\GG$ is tracial if, and only if its scaling group is trivial and this happens if, and only if the dual discrete quantum group $\whG$ is unimodular (equivalently has tracial integrals). In fact, behind all these objects and properties stands a family $(\uprho_\alpha)_{\alpha\in\IrrG}$ of positive invertible operators (see \cite{NeshTu}) and $\GG$ is of Kac type if, and only if $\uprho_\alpha=\I_{\msf{H}_\alpha}$ for all $\alpha\in\IrrG$.\\
A theory of locally compact quantum groups was proposed by Kustermans and Vaes (\cite{KustermansVaes}). As general quantum group can be non-unimodular, each quantum group $\GG$ has two Haar integrals: left $\vp$ and right $\psi$. It is still possible that these are non-tracial, however now the situation is more complicated and the above simple equivalences from the world of compact quantum groups are no longer valid.\\
An intermediate step between the theory of compact and general locally compact quantum groups is formed by the so called type I locally compact quantum groups. Roughly speaking, similarly to the classical (i.e.~not quantum) setting, these are quantum groups with type I universal group \cst-algebras. Their study was initiated in the doctoral dissertation of Desmedt \cite{Desmedt} where he has constructed a Plancherel measure on $\IrrG$ and described its properties. Together with the Plancherel measure come two fields of strictly positive selfadjoint operators, $(D_\pi)_{\pi\in\IrrG}$ and $(E_\pi)_{\pi\in\IrrG}$. They can be thought of as replacements of the operators $\uprho_\alpha$ from the compact theory; for example, the Haar integrals of $\whG$ are tracial if, and only if almost all operators $D_\pi,E_\pi$ are multiples of the identity. One of the main results of our paper is a theorem which describes a relation between various properties of $\GG$ and $\whG$ (unimodularity, traciality of the Haar integrals, trivial scaling group etc.) and properties of operators $D_\pi,\,E_\pi\, (\pi\in\IrrG)$ -- this is accomplished in Section \ref{secspecial}.\\
In the next section we introduce objects used in the paper and set up the notation. Section \ref{secpreliminaries} is devoted to introducing a notion of matrix coefficients (in type I case) and recalling results of Desmedt (\cite{Desmedt}) and Caspers (\cite{Caspers}) which are used later in the paper. In Section \ref{secrel} we describe the polar decomposition of the map $T'\colon \Lambda_\psi(x)\mapsto\Lvp(x^*)$ coming from the Tomita-Takesaki theory and as a corollary we get an important relation between unitary operators $\mc{Q}_L,\mc{Q}_R$ from the Desmedt's theorem. In Section \ref{secoperators} we show how various operators act on the level of direct integrals. We remark that a formula for $\nabla_{\hvp}^{it}$ from Theorem \ref{tw1} was recently used in \cite{quantumdisk} to deduce that the Toeplitz algebra is not an algebra of continuous functions on a compact quantum group. Finally, in Section \ref{secexamples} we describe two interesting examples of type I locally compact quantum groups: discrete group $\widehat{\SUd}$ and the quantum $"az+b"$ group.

\section{Notation}\label{secnotation}
Throughout the paper, $\GG$ will be a \textbf{locally compact quantum group} in the sense of Kustermans and Vaes. W refer the reader to papers \cite{KustermansVaes, Daele} for an introduction to the subject, here we will recall only necessary facts. Quantum group $\GG$ comes together with a number of objects: first of all we have a von Neumann algebra $\Linf$, a normal unital $\star$-homomorphism $\Delta_{\GG}\colon \Linf\rightarrow \Linf\bar\otimes\Linf$ called \textbf{comultiplication} and two n.s.f.~weights on $\Linf$: $\vp$ and $\psi$. They are called respectively the left and the right \textbf{Haar integral} as they satisfy certain invariance conditions. We will write $\Lvp,\Lpsi$ for the GNS maps. The GNS Hilbert spaces $\msf{H}_\vp,\msf{H}_\psi$ can be identified and will be denoted by $\LdG$. We will write $(\sigma^{\vp}_t)_{t\in\RR},\nabla_\vp,J_\vp$ for the group of \textbf{modular automorphisms} associated with the weight $\vp$, the \textbf{modular operator} and the \textbf{modular conjugation} -- an analogous notation will be used for other weights. The predual of $\Linf$ will be denoted by $\Lj$.
With every locally compact quantum group $\GG$ one can associate its \textbf{dual} $\whG$. The objects associated with $\whG$ will be decorated with hats. The Hilbert spaces $\LdG$, $\LL^2(\whG)$ can (and will) be identified. We will use a \cst-algebra $\CG\subseteq\B(\LdG)$. It is a $\swot$ dense subalgebra of $\Linf$. 
An important role in the theory plays the \textbf{Kac-Takesaki operator}\footnote{Symbol $\otimes$ stands for the minimal tensor product of \cst-algebras or the tensor product of Hilbert spaces.} $\mrW\in \M(\CG\otimes\CGD)$. It is a unitary operator characterized by the property $((\omega\otimes\id)\mrW^* )\Lvp(x)=\Lvp((\omega\otimes\id)\Delta_{\GG}(x))\;(\omega\in\Lj,x\in\mf{N}_{\vp})$. We note that the right leg of $\mrW$ generates $\CGD$ -- this means that the map $\lambda\colon \Lj \ni \omega\mapsto (\omega\otimes\id)\mrW\in \CGD$ satisfies $\ov{\lambda(\Lj)}=\CGD$. There is also a unitary $\mrV\in \Linfd'\bar\otimes \Linf$ related to the right Haar integral $\psi$. With $\GG$ one can associate yet another \cst-algebra, $\mathrm{C}_0^{u}(\GG)$ called \textbf{the universal version} of $\CG$. It is related to $\CG$ via so called \textbf{reducing morphism} $\Lambda_{\GG}\colon \mathrm{C}_0^{u}(\GG)\rightarrow \CG$. We remark that $\CGDu$ plays a role of the full group \cst-algebra and its representations are in bijection with unitary representations of $\GG$ (see \cite{Kustermans, MUQGII}). To be more precise, there exists a unitary operator $\WW\in\M(\CG\otimes \CGDu)$ such that every unitary representation of $\GG$ on a Hilbert space $\msf{H}_\pi$ is of the form $U^{\pi}=(\id\otimes\pi)\WW$ for a nondegenerate representation $\pi\colon\CGDu\rightarrow \B(\msf{H}_\pi)$. This correspondence preserves irreducibility -- consequently the spectrum of $\CGDu$ will be denoted by $\IrrG$.\\
Besides the groups of modular automorphisms $(\sigma^{\vp}_t)_{t\in\RR}, (\sigma^{\psi}_t)_{t\in\RR}$ there is also a third group of automorphisms of $\Linf$ -- $(\tau_t)_{t\in\RR}$, called the \textbf{scaling group}. It is implemented by a strictly positive selfadjoint operator $P$: $\tau_t(x)=P^{it} x P^{-it}\,(x\in \Linf,t\in\RR)$. We remark that $P$ is selfdual: we have $\hat{P}=P$. The Haar integrals are relatively invariant under the scaling group: we have $\vp\circ \tau_t=\nu^{-t} \vp,\psi\circ \tau_t=\nu^{-t}\psi\,(t\in\RR)$ for a number $\nu>0$ called the \textbf{scaling constant}. The scaling constant relates the modular conjugations for $\vp$ and $\psi$: we have $J_\psi=\nu^{\frac{i}{4}} J_\vp$. An important role in our paper is played by the so called \textbf{modular element} $\delta$. It is a strictly positive selfadjoint operator affiliated with $\Linf$ which apppears as (a part of) the Radon--Nikodym derivative between $\psi$ and $\vp$. There is a plethora of formulas which relates the above objects. Let us end this part of the introduction with a collection of them -- we will use it a lot throught the paper: 
\begin{equation}\begin{split}\label{eq20}
J_{\hvp} J_\vp &= \nu^{\frac{i}{4}} J_\vp J_{\hvp},\quad
\nabla_\psi^{it}=J_{\hvp} \nabla_\vp^{-it} J_{\hvp},\quad
J_{\hvp} \delta^{it}=\delta^{it} J_{\hvp},\quad
\nabla_\psi^{it}=\hat{\delta}^{-it} P^{-it}\\
\nabla_\vp^{is}\delta^{it}&= \nu^{ist} \delta^{it}\nabla_\vp^{is},\quad
\nabla_\psi^{is}\delta^{it}= \nu^{ist} \delta^{it}\nabla_\psi^{is},
\quad J_\vp P^{it} = P^{it} J_\vp,\quad
P^{it} \delta^{is}=\delta^{is} P^{it}\\
P^{it}\nabla_\vp^{is}&=\nabla_\vp^{is} P^{it},\quad
P^{it}\nabla_\psi^{is}=\nabla_\psi^{is} P^{it},
\end{split}\end{equation}
where $t$ and $s$ are arbitrary reals numbers. The above properties belong to the standard theory of locally compact quantum groups, their proofs can be found in \cite{Kustermans, KustermansVaes, Daele}.\\
Throught the paper, we will extensively use the theory of direct integrals -- we refer the reader to \cite{DixmiervNA, Lance} for basic notions and properties. Let us mention here only that if $(\msf{H}_x)_{x\in X}$ is a measurable field o Hilbert spaces, then $\int_X^{\oplus} \msf{H}_x\md\mu(x)$ is a Hilbert space which consists of (classes of) measurable vector fields\footnote{This means that $(\msf{H}_x)_{x\in X}$ comes together with a choice of a fundamental sequence $\{(\xi^n_x)_{x\in X}\,|\, n\in\NN\}$ and for each $n\in\NN$, the function $X\ni x \mapsto \ismaa{\xi^n_x}{\xi_x}\in \CC$ is measurable. We will neglect mentioning the fundamental sequence and simply say that $(\msf{H}_x)_{x\in X}$ is a measurable field of Hilbert spaces.} $\xi=(\xi_x)_{x\in X}$ satisfying $\int_X \|\xi_x\|^2\md\mu(x)<+\infty$. Usually one also writes $\xi=\int_X^{\oplus} \xi_x\md\mu(x)$. For two closed operators $A$ and $B$, the symbol $A\circ B$ will stand for the operator given by $A\circ B(\xi)=A(B(\xi))$ on the domain $\Dom(A\circ B)=\{\xi\in \Dom(B)\,|\, B\xi\in \Dom(A)\}$. Whenever $A\circ B$ is closable, we will denote its closure by $AB$. The complex conjugate of a Hilbert space $\msf{H}$ will be denoted by $\ov{\msf{H}}$. For an operator $A$ on $\msf{H}$, $A^{\msf T}$ will be an operator on $\ov{\msf{H}}$ given by $A^{\msf T}\, \ov{\xi}=\ov{A^* \xi}\,(\xi\in\msf{H})$. If $\pi$ is a representation of $\GG$ on $\msf{H}_\pi$ then we associate with it a representation $\pi^c={\cdot}^\msf{T}\circ \pi\circ\wh{R}^u$ on $\ov{\msf{H}_\pi}$, where $\wh{R}^u$ is the unitary antipode on $\CGDu$ (\cite{Kustermans, MUQGII}).
All scalar products are linear on the right.

\section{Preliminaries}\label{secpreliminaries}
Let us introduce two notions: we say that $\GG$ is \textbf{second countable}\footnote{This condition is equivalent to number of other separability assumptions, see \cite[Lemma 14.6]{Krajczok}. We note that these conditions are satisfied for $\GG$ if and only they are satisfied for $\whG$.} if $\CGDu$ separable and \textbf{type I} if $\CGDu$ is of type I.
Our work is based on the work of Desmedt \cite{Desmedt} and Caspers and Koelink \cite{Caspers,CaspersKoelink}. First of all, we will use the fundamental result of Desmedt which states existence of the Plancherel measure and its properties (see also \cite{Krajczok} and discussion therein). We recall only parts that will be used in the paper.

\begin{theorem}\label{PlancherelL}
Let $\GG$ be a second countable, type I locally compact quantum group. There exists a standard measure $\mu$ on $\IrrG$, a measurable field of Hilbert spaces $(\msf{H}_\pi)_{\pi\in \IrrG}$, measurable field of representations\footnote{We use the same symbol $\pi$ for a class of representations and its representative chosen according to a fixed measurable field of representations.}, measurable field of strictly positive self-adjoint operators $(D_\pi)_{\pi\in \IrrG}$ and a unitary operator $\mc{Q}_L\colon \LdG\rightarrow \int_{\IrrG}^{\oplus} \HS(\msf{H}_\pi)\md\mu(\pi)$ such that:
\begin{enumerate}[label=\arabic*)]
\item For all $\alpha\in \Lj$ such that $\lambda(\alpha)\in\mf{N}_{\hvp}$ and $\mu$-almost every $\pi\in\IrrG$ the operator $(\alpha\otimes\id) (U^{\pi}) \circ D_{\pi}^{-1}$ is bounded and its closure $(\alpha\otimes\id) (U^{\pi}) D_{\pi}^{-1}$ is Hilbert-Schmidt.
\item The operator $\mc{Q}_L$ is the isometric extension of
\[
\Lhvp(\lambda(\Lj)\cap \mf{N}_{\hvp})\ni \Lhvp(\lambda(\alpha))\mapsto
\int_{\IrrG}^{\oplus} 
(\alpha\otimes\id) (U^{\pi}) D_{\pi}^{-1}\md\mu(\pi)
\in
\int_{\IrrG}^{\oplus} \HS(\msf{H}_\pi)\md\mu(\pi),
\]
\item
The operator $\mc{Q}_L$ satisfies the following equations:
\[
\mc{Q}_L (\omega\otimes\id)\mrW=
\bigl(\int_{\IrrG}^{\oplus} (\omega\otimes\id)U^{\pi}\otimes\I_{\ov{\msf{H}_{\pi}}}\md\mu(\pi)\bigr)\mc{Q}_L
\]
and
\[
\mc{Q}_L (\omega\otimes\id)\chi(\mrV)=
\bigl(\int_{\IrrG}^{\oplus} \I_{\msf{H}_\pi}\otimes \pi^{c}((\omega\otimes\id){\WW})\md\mu(\pi)\bigr)\mc{Q}_L
\]
for every $\omega\in\LL^1(\GG)$.
\item
Haar integrals on $\whG$ are tracial if and only if almost all $D_{\pi}$ are multiples of the identity.
\item The operator $\mc{Q}_L$ transforms $\LL^{\infty}(\whG)\cap \LL^{\infty}(\whG)'$ into diagonalisable operators.
\item We can assume that $(\msf{H}_\pi)_{\pi\in\IrrG}$ is the canonical measurable field of Hilbert spaces.
\end{enumerate}
\end{theorem}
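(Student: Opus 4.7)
The strategy is to adapt Dixmier's Plancherel theorem for type I locally compact groups to the quantum setting, using Tomita--Takesaki theory for $\hvp$ on $\Linfd \subseteq \B(\LdG)$. Since $\GG$ is second countable and type I, $\CGDu$ is a separable type I $\cst$-algebra; by standard disintegration theory for type I von Neumann algebras one obtains a decomposition $\Linfd \cong \int_{\IrrG}^{\oplus} \B(\msf{H}_{\pi}) \md\mu(\pi)$ over the spectrum $\IrrG$ for some standard measure $\mu$, with $\Linfd$ factorial on each fibre, and with diagonalisable operators corresponding exactly to the centre $\Linfd \cap \Linfd'$, which is 5). Because each fibre is type I, the standard form of $\B(\msf{H}_{\pi})$ is realised on $\HS(\msf{H}_{\pi}) \cong \msf{H}_{\pi} \otimes \ov{\msf{H}_{\pi}}$, so the direct integral of standard forms furnishes a canonical unitary $\mc{Q}_L \colon \LdG \to \int_{\IrrG}^{\oplus} \HS(\msf{H}_{\pi}) \md\mu(\pi)$.

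Next I would identify $D_{\pi}$ by disintegrating $\hvp$. On each fibre $\B(\msf{H}_{\pi})$ the restriction of $\hvp$ is a semifinite weight of trace class type, hence of the form $\Tr(D_{\pi}^{-2}\,\cdot\,)$ for a strictly positive (possibly unbounded) operator $D_{\pi}$; the induced GNS map into $\HS(\msf{H}_{\pi})$ is $x \mapsto x D_{\pi}^{-1}$. Combining this with the identity $\lambda(\omega) = (\omega \otimes \id)\mrW$ gives, for $\omega \in \Lj$ with $\lambda(\omega) \in \mf{N}_{\hvp}$,
\[
\mc{Q}_L \Lhvp(\lambda(\omega)) = \int_{\IrrG}^{\oplus} \pi(\lambda(\omega)) D_{\pi}^{-1} \md\mu(\pi) = \int_{\IrrG}^{\oplus} (\omega \otimes \id)U^{\pi}\, D_{\pi}^{-1} \md\mu(\pi),
\]
which is 2); Hilbert--Schmidtness in 1) is immediate from fibrewise square-integrability. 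Isometry of $\mc{Q}_L$ follows from the fibrewise Plancherel identity, and the extension to all of $\mf{N}_{\hvp}$ uses density of $\lambda(\Lj) \cap \mf{N}_{\hvp}$ in the graph norm of $\Lhvp$. Standardness of $\mu$ and the canonical-field assertion in 6) follow from separability of $\CGDu$.

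The intertwining relations in 3) are then immediate from the way $\mrW$ and $\mrV$ implement the regular representations: $\mrW$ acts on $\Lhvp(\lambda(\omega))$ by left multiplication, so $\mc{Q}_L$ conjugates $(\omega \otimes \id)\mrW$ into left multiplication by $(\omega \otimes \id)U^{\pi}$ on $\HS(\msf{H}_{\pi})$, which in the $\msf{H}_{\pi} \otimes \ov{\msf{H}_{\pi}}$ picture is $(\omega \otimes \id)U^{\pi} \otimes \I_{\ov{\msf{H}_{\pi}}}$; dually $\chi(\mrV)$ gives rise to right multiplication, which on the second tensor factor becomes the conjugate representation $\pi^{c}$ via $\pi^{c} = {\cdot}^{\msf{T}} \circ \pi \circ \wh{R}^{u}$ together with the relation between $\mrV$ and $\mrW$ through the unitary antipode. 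Property 4) follows because the modular automorphism group of $\hvp$ restricted to the fibre $\B(\msf{H}_{\pi})$ is conjugation by $D_{\pi}^{-2it}$, which is trivial almost everywhere if and only if $D_{\pi}$ is scalar almost everywhere.

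The main obstacle is the measurability bookkeeping: simultaneously choosing a measurable section of the spectrum, a measurable field of irreducible representations, and a measurable family $D_{\pi}$ implementing the fibrewise modular structure of $\hvp$, all compatible with the chosen GNS identification on $\HS(\msf{H}_{\pi})$. The classical Dixmier argument handles this via Mackey's Borel structure on $\IrrG$ and a measurable selection theorem; the quantum adaptation, carried out in Desmedt's thesis \cite{Desmedt}, replaces the Plancherel weight of the group $\cst$-algebra by the dual weight $\hvp$ and uses $\mrW$ to produce the dense subspace $\lambda(\Lj) \cap \mf{N}_{\hvp}$ on which the isometry of $\mc{Q}_L$ can first be checked before extension by continuity.
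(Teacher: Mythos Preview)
The paper does not prove this theorem at all: it is stated in Section~\ref{secpreliminaries} as a recollection of Desmedt's fundamental result from \cite{Desmedt} (with a pointer also to \cite{Krajczok}), and no argument is given beyond the citation. So there is no ``paper's own proof'' to compare your proposal against.

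That said, your sketch is a fair outline of the strategy underlying Desmedt's argument: central decomposition of the type~I von Neumann algebra $\Linfd$ over $\IrrG$, fibrewise identification of the standard form with $\HS(\msf{H}_\pi)$, disintegration of $\hvp$ as $\Tr(D_\pi^{-2}\cdot)$ on each fibre, and the intertwining relations for $\mrW$ and $\chi(\mrV)$ read off from left/right multiplication on Hilbert--Schmidt operators. You also correctly flag the genuine technical content, namely the measurable selection of representatives and of the operators $D_\pi$ compatible with the GNS identification; this is exactly where the work lies in \cite{Desmedt}. Your proposal is not a proof but a roadmap, and as such it is consistent with what the cited source does; just be aware that the paper itself treats the theorem as input rather than something to be established.
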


We have also the right version of the above theorem.

\begin{theorem}\label{PlancherelR}
Let $\GG$ be a second countable, type I locally compact quantum group. There exists a standard measure $\mu^R$ on $\IrrG$ a measurable field of Hilbert space $(\msf{K}_\pi)_{\pi\in \IrrG}$, measurable field of representations, measurable field of strictly positive self-adjoint operators $(E_\pi)_{\pi\in \IrrG}$ and a unitary operator $\mc{Q}_R\colon \LdG\rightarrow \int_{\IrrG}^{\oplus} \HS(\msf{K}_\pi)\md\mu^{R}(\pi)$ such that:
\begin{enumerate}[label=\arabic*)]
\item For all $\alpha\in \LL^1(\GG)$ such that $\lambda(\alpha)\in\mf{N}_{\hpsi}$ and $\mu^R$-almost every $\pi\in\IrrG$ the operator $(\alpha\otimes\id) (U^{\pi}) \circ E_{\pi}^{-1}$ is bounded and its closure $(\alpha\otimes\id) (U^{\pi}) E_{\pi}^{-1}$ is Hilbert-Schmidt.
\item The operator $\mc{Q}_R$ is the isometric extension of
\[
\begin{split}
J_{\hvp} J_\vp \Lhvps(\lambda(\LL^1(\GG))\cap \mf{N}_{\hpsi})\ni &
J_{\hvp} J_\vp \Lhvps(\lambda(\alpha))\mapsto\\
&\mapsto
\int_{\IrrG}^{\oplus} 
(\alpha\otimes\id) (U^{\pi}) E_{\pi}^{-1}\md\mu^R(\pi)
\in
\int_{\IrrG}^{\oplus} \HS(\msf{K}_\pi)\md\mu^R(\pi),
\end{split}
\]
\item
The operator $\mc{Q}_R$ satisfies the following equations:
\[
\mc{Q}_RJ_{\hvp}J_\vp (\omega\otimes\id)\mrW=
\bigl(\int_{\IrrG}^{\oplus} (\omega\otimes\id)U^{\pi}\otimes\I_{\ov{\msf{H}_{\pi}}}\md\mu^R(\pi)\bigr)\mc{Q}_R J_{\hvp} J_\vp
\]
and
\[
\mc{Q}_R J_{\hvp} J_\vp(\omega\otimes\id)\chi(\mrV)=
\bigl(\int_{\IrrG}^{\oplus} \I_{\msf{H}_\pi}\otimes \pi^{c}((\omega\otimes\id){\WW})\md\mu^R(\pi)\bigr)\mc{Q}_RJ_{\hvp} J_\vp
\]
for every $\omega\in\LL^1(\GG)$.
\item
Haar inegrals on $\whG$ are tracial if and only if almost all $E_{\pi}$ are multiples of the identity.
\item The operator $\mc{Q}_R$ transforms $\LL^{\infty}(\whG)\cap \LL^{\infty}(\whG)'$ into diagonalisable operators.
\item
We can choose $\mu^R=\mu$ and $\msf{K}_\pi=\msf{H}_\pi$ (and the same field of representations as in Theorem \ref{PlancherelL}).
\end{enumerate}
\end{theorem}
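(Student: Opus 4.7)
The strategy is to reduce Theorem \ref{PlancherelR} to Theorem \ref{PlancherelL} via the unitary change of variables $\tilde{\mc{Q}}_R := \mc{Q}_R J_{\hvp} J_\vp$. Under this substitution, items 2) and 3) of the right Plancherel theorem become, respectively, an isometric extension formula involving $\Lhvps$ directly and intertwining formulas identical to those in Theorem \ref{PlancherelL} but with $\psi$ in place of $\vp$. Since Desmedt's construction depends only on the general structure of the Kac-Takesaki operator and on an n.s.f.\ invariant weight, it applies equally well to the right Haar integral $\psi$, producing data $(\mu^R,(\msf{K}_\pi),(E_\pi))$ together with an isometric extension $\tilde{\mc{Q}}_R$ of
\[
\Lhvps(\lambda(\Lj)\cap\mf{N}_{\hpsi})\ni \Lhvps(\lambda(\alpha))\mapsto \int_{\IrrG}^{\oplus}(\alpha\otimes\id)(U^\pi) E_\pi^{-1}\md\mu^R(\pi).
\]
The remaining assertions 1), 4), 5) carry over directly from their left analogues in Theorem \ref{PlancherelL}.

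With $\tilde{\mc{Q}}_R$ in hand, I would define $\mc{Q}_R := \tilde{\mc{Q}}_R (J_{\hvp} J_\vp)^{-1}$, which is unitary because $J_{\hvp} J_\vp$ is. Evaluating $\mc{Q}_R$ at the vector $J_{\hvp} J_\vp \Lhvps(\lambda(\alpha))$ yields precisely item 2), and the intertwining formulas for $\tilde{\mc{Q}}_R$ produce item 3) after reintroducing $J_{\hvp} J_\vp$: none of the relations in \eqref{eq20} are used at this stage, only the unitarity of $J_{\hvp} J_\vp$.

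The crux of the proof is item 6), i.e.\ that $\mu^R$, the $\msf{K}_\pi$, and the chosen measurable field of representations can actually be identified with their counterparts in Theorem \ref{PlancherelL}. For this I would argue that item 5) forces both $\mc{Q}_L$ and $\tilde{\mc{Q}}_R$ to diagonalise the center $\Linfd\cap\Linfd'$, and standard direct-integral theory then shows that $\mu$ and $\mu^R$ are mutually absolutely continuous. Passing to equivalent measures and correspondingly rescaling $E_\pi$ by the Radon-Nikodym derivative (legitimate by strict positivity of $E_\pi$), one may assume $\mu^R=\mu$; with the measures matched, the canonicity from Theorem \ref{PlancherelL} part 6) allows one to take $\msf{K}_\pi=\msf{H}_\pi$ and the same measurable field of representations.

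The main obstacle is verifying that Desmedt's construction carries over to $\psi$ without essential modification. Although the argument is formally symmetric between left and right, every invocation of left-invariance has to be replaced by a use of right-invariance of $\psi$, and the measurability of the fields $(E_\pi)$ and of the representations requires careful re-examination. A secondary delicate point is item 6), where ``mutually absolutely continuous'' must be upgraded to genuine equality; this relies crucially on the type I hypothesis guaranteeing essential uniqueness of the Plancherel decomposition of $\LdG$ under the action of $\Linfd$.
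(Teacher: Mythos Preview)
Your proposal is essentially the approach the paper takes (or rather, recalls from \cite{Desmedt} and \cite{Krajczok}): the paper does not prove Theorem~\ref{PlancherelR} directly but refers to these sources, and in the proof of Proposition~\ref{stw1} makes explicit that $\mc{Q}_R=\mc{Q}_{R,0}\,J_\vp J_{\hvp}$ where $\mc{Q}_{R,0}$ is exactly your $\tilde{\mc{Q}}_R$, the Desmedt unitary built from the right Haar integral $\hpsi$. Your treatment of item~6) via mutual absolute continuity of the central decompositions and rescaling of $E_\pi$ is the standard uniqueness argument for Plancherel-type decompositions in the type~I setting, which is what underlies the cited \cite[Theorem~3.4]{Krajczok}.
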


From now on, let $\GG$ be a second countable, type I locally compact quantum group and choose all the objects provided by theorems \ref{PlancherelL}, \ref{PlancherelR}. The last point of the above theorem allows us to assume $\mu^R=\mu$ and $\msf{K}_\pi=\msf{H}_\pi$. Let us introduce two strictly positive, selfadjoint operators $D=\int_{\IrrG}^{\oplus} D_\pi \md\mu(\pi)$ and $E=\int_{\IrrG}^{\oplus} E_\pi \md\mu(\pi)$. We will use plenty of times the following easily derived property:
\begin{proposition}\label{stw1}
Define an antiunitary operator $\Sigma=\int_{\IrrG}^{\oplus} J_{\msf{H}_\pi} \md\mu(\pi)$, where
\[
J_{\msf{H}_\pi}\colon \msf{H}_\pi\otimes\ov{\msf{H}_\pi}\ni
\xi\otimes\ov{\eta}\mapsto\eta\otimes\ov\xi\in
\msf{H}_\pi\otimes\ov{\msf{H}_\pi}\quad(\pi\in\IrrG).
\]
We have
\[
\nu^{\frac{i}{4}}J_{\hpsi}=J_{\hvp}=\mc{Q}_L^* \Sigma\mc{Q}_L= \mc{Q}_R^*\Sigma\mc{Q}_R.
\]
\end{proposition}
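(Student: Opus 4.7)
The first identity $\nu^{i/4} J_{\hpsi} = J_{\hvp}$ is obtained by applying the relation $J_\psi = \nu^{i/4} J_\vp$ from \eqref{eq20} with $\whG$ in place of $\GG$; since the dual scaling constant satisfies $\hat\nu = \nu^{-1}$, this yields $J_{\hpsi} = \nu^{-i/4} J_{\hvp}$, which rearranges to the claim.

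For the main identity $J_{\hvp} = \mc{Q}_L^* \Sigma \mc{Q}_L$, first observe that under the canonical isomorphism $\HS(\msf{H}_\pi) \cong \msf{H}_\pi \otimes \ov{\msf{H}_\pi}$ sending $\xi \otimes \ov\eta$ to $\si{\xi}{\eta}$, the flip $J_{\msf{H}_\pi}$ is nothing but the adjoint map $T \mapsto T^*$; hence $\Sigma$ is fibrewise adjoint, $\Sigma^2 = \I$, and $\tilde J := \mc{Q}_L^* \Sigma \mc{Q}_L$ is an antiunitary involution. I would verify $\tilde J = J_{\hvp}$ by checking equality on the dense subspace $\{\Lhvp(\lambda(\alpha)) \,|\, \alpha \in \Lj,\ \lambda(\alpha) \in \mf{N}_{\hvp} \cap \mf{N}_{\hvp}^*\}$. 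Theorem \ref{PlancherelL}(2) directly gives
\[
\Sigma \mc{Q}_L \Lhvp(\lambda(\alpha)) = \int_{\IrrG}^{\oplus} D_\pi^{-1}\bigl((\alpha \otimes \id) U^\pi\bigr)^{*} \md\mu(\pi).
\]
For the other side I would rewrite $J_{\hvp} \Lhvp(\lambda(\alpha))$ as $\Lhvp(\lambda(\beta))$ for an explicit $\beta \in \Lj$, obtained from the Tomita polar decomposition of the closure of $\Lhvp(x) \mapsto \Lhvp(x^*)$ into $J_{\hvp} \nabla_{\hvp}^{1/2}$, combined with the standard identities $\wh R(\lambda(\omega)) = \lambda(\omega \circ R)$ and the modular covariance of $\lambda$ under $\sigma_t^{\hvp}$ analytically continued to $t = -i/2$ (see \cite{KustermansVaes, Daele}). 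Applying Theorem \ref{PlancherelL}(2) to $\Lhvp(\lambda(\beta))$ and using the transformation rule of $U^\pi$ under the unitary antipode -- ultimately coming from $(R \otimes \wh R)\mrW = \mrW$ -- the resulting fibre at $\pi$ matches $D_\pi^{-1}((\alpha \otimes \id)U^\pi)^{*}$, establishing $\tilde J = J_{\hvp}$.

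The identity $\mc{Q}_R^* \Sigma \mc{Q}_R = J_{\hvp}$ then follows by a short reduction. Setting $V := \mc{Q}_R \circ J_{\hvp} J_\vp$, Theorem \ref{PlancherelR}(2) presents $V$ with exactly the same structure as $\mc{Q}_L$ but with $(\hvp, D_\pi)$ replaced by $(\hpsi, E_\pi)$; the preceding argument, run in this setting, yields $V^* \Sigma V = J_{\hpsi}$. Writing $\mc{Q}_R = V \cdot J_\vp J_{\hvp}$ and taking adjoints gives $\mc{Q}_R^* \Sigma \mc{Q}_R = J_{\hvp} J_\vp J_{\hpsi} J_\vp J_{\hvp}$, which reduces to $J_{\hvp}$ via the identity $J_\vp J_{\hvp} J_\vp = \nu^{-i/4} J_{\hvp} = J_{\hpsi}$, a direct consequence of $J_{\hvp} J_\vp = \nu^{i/4} J_\vp J_{\hvp}$ from \eqref{eq20} together with the first part of the proposition. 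The main obstacle of the proof is the explicit computation in the second paragraph: one has to track how the unitary antipode, the modular automorphisms, and the scaling constant $\nu$ act on the Plancherel data $(U^\pi, D_\pi)_{\pi \in \IrrG}$ transported through $\mc{Q}_L$, so that the fibrewise adjoint $\Sigma$ reproduces $J_{\hvp}$ exactly on the GNS side. Once this bookkeeping is in place, the identification $\tilde J = J_{\hvp}$ is forced by density and antiunitarity.
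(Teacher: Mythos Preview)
Your treatment of the first identity $\nu^{i/4}J_{\hpsi}=J_{\hvp}$ and of the reduction for $\mc{Q}_R$ in the third paragraph is correct and matches the paper almost verbatim: the paper also introduces $\mc{Q}_{R,0}$ (your $V$), invokes $\mc{Q}_{R,0}^*\Sigma\mc{Q}_{R,0}=J_{\hpsi}$, and then unwinds $\mc{Q}_R=\mc{Q}_{R,0}J_\vp J_{\hvp}$ using $J_{\hvp}J_\vp=\nu^{i/4}J_\vp J_{\hvp}$ exactly as you do.

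The difference is in how the core identity $J_{\hvp}=\mc{Q}_L^*\Sigma\mc{Q}_L$ is obtained. The paper does not compute anything here: it simply observes that the GNS construction of the universal left Haar weight $\hvp^u$ on $\CGDu$ is $(\LdG,\Lambda_{\whG},\Lhvp\circ\Lambda_{\whG})$, so $J_{\hvp}$ is the modular conjugation for $\hvp^u$, and then quotes Desmedt's thesis, where the fact that $\mc{Q}_L$ carries this modular conjugation to the fibrewise adjoint $\Sigma$ is part of the Plancherel construction itself. In other words, in Desmedt's setup the operators $D_\pi$ are \emph{defined} so that this intertwining holds, and the paper treats it as given.

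Your route---verifying the identity by hand on vectors $\Lhvp(\lambda(\alpha))$ via Tomita--Takesaki and the antipode relation $(R\otimes\wh R)\mrW=\mrW$---is a legitimate alternative, but as you yourself flag, the ``bookkeeping'' is the entire content. To match $(\beta\otimes\id)(U^\pi)D_\pi^{-1}$ against $D_\pi^{-1}\bigl((\alpha\otimes\id)U^\pi\bigr)^*$ fibrewise you need to know that conjugation by $D_\pi^{2it}$ implements $\sigma^{\hvp}_t$ on the $\pi$-fibre; this is precisely the modular property of $D_\pi$ built into Desmedt's construction and is not among the properties listed in Theorem~\ref{PlancherelL} as stated here. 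So completing your sketch would amount to reproving that portion of Desmedt's theorem, which is why the paper simply cites it. Your approach buys self-containment at the cost of redoing foundational work; the paper's buys brevity by leaning on the reference.
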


\begin{proof}
Let $\hvp^u$ be the left Haar weight on the universal \cst-algebra $\CGDu$. Its GNS construction is $(\LdG,\Lambda_{\whG},\Lambda_{\hvp}\circ\Lambda_{\whG})$ (see \cite{Kustermans}), hence $J_{\hvp}$ is the modular conjugation for $\hvp^{u}$. It is transformed to $\Sigma$ by $\mc{Q}_L$ -- it is a part of the Desmedt's result.\\
Similarly, $\mc{Q}_{R,0}$ transforms $J_{\hpsi}$ to $\Sigma$: $J_{\hpsi}=\mc{Q}_{R,0}^* \Sigma \mc{Q}_{R,0}$. Operator $\mc{Q}_R$ is defined as $\mc{Q}_R=\mc{Q}_{R,0} J_\vp J_{\hvp}$ (see \cite[Theorem 3.4]{Krajczok}). Consequently, we get $J_{\hpsi}= J_{\vp} J_{\hvp} \mc{Q}_R^*\Sigma \mc{Q}_R J_{\hvp} J_{\vp}$. Using the commutation relation $J_{\hvp} J_\vp =\nu^{\frac{i}{4}} J_\vp J_{\hvp}$ (see equation \eqref{eq20}) and formula $J_{\hpsi}=\nu^{-\frac{i}{4}} J_{\hvp}$ (the scaling constant of $\whG$ is $\hat{\nu}=\nu^{-1}$) we arrive at
\[
\mc{Q}_R^* \Sigma \mc{Q}_R=
J_{\hvp} J_{\vp}(\nu^{-\frac{i}{4}}J_{\hvp}) J_{\vp} J_{\hvp}=
\nu^{-\frac{i}{4}} \nu^{\frac{i}{4}} J_\vp J_{\hvp}J_{\hvp}  J_\vp J_{\hvp}=J_{\hvp}.
\]
\end{proof}

Let us note in the next proposition how $\mc{Q}_L,\mc{Q}_R$ transform $\LL^{\infty}(\whG)$ and its comutant.

\begin{proposition}\label{stw7}
We have the following equalities of von Neumann algebras:
\[\begin{split}
\mc{Q}_L \LL^{\infty}(\whG) \mc{Q}_L^*=
\int_{\IrrG}^{\oplus} \B(\msf{H}_\pi)\otimes \I_{\ov{\msf{H}_\pi}} 
\md\mu(\pi) &,\quad
\mc{Q}_L \LL^{\infty}(\whG)' \mc{Q}_L^*=
\int_{\IrrG}^{\oplus} \I_{\msf{H}_\pi}\otimes \B(\ov{\msf{H}_\pi})
\md\mu(\pi)\\
\mc{Q}_R \LL^{\infty}(\whG) \mc{Q}_R^*=
\int_{\IrrG}^{\oplus} \I_{\msf{H}_\pi}\otimes \B(\ov{\msf{H}_\pi})
\md\mu(\pi)
&,\quad
\mc{Q}_R \LL^{\infty}(\whG)' \mc{Q}_R^*=
\int_{\IrrG}^{\oplus} \B(\msf{H}_\pi)\otimes \I_{\ov{\msf{H}_\pi}} \md\mu(\pi)
 \end{split}\]
\end{proposition}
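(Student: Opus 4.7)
The plan is to read the first equality directly off Theorem \ref{PlancherelL}(3), derive the second by taking commutants, and then repeat the argument for $\mc{Q}_R$ via Theorem \ref{PlancherelR}(3) together with a short modular-theoretic manipulation.

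First I will show $\mc{Q}_L \Linfd \mc{Q}_L^* = \int_{\IrrG}^{\oplus} \B(\msf{H}_\pi)\otimes\I_{\ov{\msf{H}_\pi}}\md\mu(\pi)$. Theorem \ref{PlancherelL}(3) says that for every $\omega\in\Lj$
\[
\mc{Q}_L \lambda(\omega) \mc{Q}_L^* = \int_{\IrrG}^{\oplus} (\omega\otimes\id)U^{\pi}\otimes \I_{\ov{\msf{H}_\pi}}\md\mu(\pi),
\]
and since $\ov{\lambda(\Lj)}=\CGD$, the set $\lambda(\Lj)$ generates $\Linfd$ as a von Neumann algebra. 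On the fibre side we have $(\omega\otimes\id)U^{\pi} = \pi((\omega\otimes\id)\WW)$; as $\omega$ varies, the elements $(\omega\otimes\id)\WW$ are norm-dense in $\CGDu$, and so by irreducibility of $\pi$ they generate $\B(\msf{H}_\pi)$ as a von Neumann algebra. By second countability one can work with a countable dense subset of $\Lj$, and then the standard fact that a countable family of decomposable operators $\int^{\oplus} A_n(\pi)\md\mu(\pi)$ generates the direct integral of the pointwise von Neumann algebras $\{A_n(\pi)\}''$ gives the first equality. The second follows by taking commutants, using $(\B(\msf{H}_\pi)\otimes\I_{\ov{\msf{H}_\pi}})' = \I_{\msf{H}_\pi}\otimes\B(\ov{\msf{H}_\pi})$ and that the commutant of a direct integral of von Neumann algebras containing the diagonalisable algebra is the pointwise-commutant direct integral.

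For the $\mc{Q}_R$ identities, the same argument applied to the first formula in Theorem \ref{PlancherelR}(3) yields
\[
(\mc{Q}_R J_{\hvp} J_\vp)\, \Linfd\, (\mc{Q}_R J_{\hvp} J_\vp)^* = \int_{\IrrG}^{\oplus} \B(\msf{H}_\pi)\otimes\I_{\ov{\msf{H}_\pi}}\md\mu(\pi).
\]
To absorb the extra factor $J_{\hvp}J_\vp$ and swap $\Linfd$ for $\Linfd'$, I will invoke the standard modular identities $J_{\hvp} \Linfd J_{\hvp} = \Linfd'$ and $J_\vp \Linfd J_\vp = \Linfd$ (the latter coming from $J_\vp$ implementing the unitary antipode of $\whG$ up to the involution). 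Combining these gives $J_{\hvp}J_\vp \Linfd J_\vp J_{\hvp} = \Linfd'$, so the display above becomes $\mc{Q}_R \Linfd' \mc{Q}_R^* = \int_{\IrrG}^{\oplus} \B(\msf{H}_\pi)\otimes\I_{\ov{\msf{H}_\pi}}\md\mu(\pi)$, and taking commutants supplies the fourth equality.

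The main potential obstacle is the direct-integral generation step: one has to check that a countable family of decomposable operators whose fibres generate $\B(\msf{H}_\pi)$ almost everywhere in fact generates the decomposable algebra $\int^{\oplus} \B(\msf{H}_\pi)\otimes\I_{\ov{\msf{H}_\pi}}\md\mu(\pi)$ as a von Neumann algebra. This is a standard result from Dixmier's treatment of direct integrals, but it requires some care with measurability of the generated fields and with the fact that the diagonalisable algebra automatically sits inside the candidate algebra. Once this is in place, the remaining manipulations are purely formal.
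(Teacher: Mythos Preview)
Your approach is correct and matches the paper's. The paper simply attributes the $\mc{Q}_L$ equalities to Desmedt (your sketch is essentially how that argument runs), and for the $\mc{Q}_R$ equalities it uses the relation $\mc{Q}_R=\mc{Q}_{R,0} J_\vp J_{\hvp}$, which is precisely what is encoded in Theorem~\ref{PlancherelR}(3) together with the modular identities $J_\vp\Linfd J_\vp=\Linfd$ and $J_{\hvp}\Linfd J_{\hvp}=\Linfd'$ that you invoke. One small remark on the point you flag as the obstacle: rather than arguing directly that the diagonalisable algebra lies in the von Neumann algebra generated by the $\mc{Q}_L\lambda(\omega)\mc{Q}_L^*$, it is cleaner to use the \emph{second} intertwining relation in Theorem~\ref{PlancherelL}(3) (or equivalently Proposition~\ref{stw1}, $J_{\hvp}=\mc{Q}_L^*\Sigma\mc{Q}_L$) to get the companion inclusion $\mc{Q}_L\Linfd'\mc{Q}_L^*\subseteq\int_{\IrrG}^{\oplus}\I_{\msf{H}_\pi}\otimes\B(\ov{\msf{H}_\pi})\md\mu(\pi)$; taking commutants of both inclusions then forces equality without any fibrewise generation argument.
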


The first part of the above result is a result of Desmedt. The second one can be derived as in the proof of Proposition \ref{stw1}, using equation $\mc{Q}_R=\mc{Q}_{R,0} J_{\vp} J_{\hvp}$. Let us now define analogs of the matrix coefficients $U^{\alpha}_{i,j}$ used in the theory of compact quantum groups. Elements of this form were already considered in \cite{Caspers}.

\begin{definition}
For $\xi,\eta\in\int_{\IrrG}^{\oplus}\msf{H}_\pi\md\mu(\pi)$ we define elements of $\Linf$:
\[
M^L_{\xi,\eta}=\int_{\IrrG} (\id\otimes\omega_{\xi_\pi,\eta_\pi}) (U^{\pi *}) \md\mu(\pi),\quad
M^R_{\xi,\eta}=\int_{\IrrG} (\id\otimes\omega_{\xi_\pi,\eta_\pi}) (U^{\pi }) \md\mu(\pi).
\]
The above elements will be referred to as left (resp.~right) matrix coefficients.
\end{definition}

Note that the above (weak) integrals converge in $\swot$ and we have $(M^L_{\xi,\eta})^*=M^R_{\eta,\xi}$.\\

Our further reasoning is based on results derived by Caspers and Koelink in \cite{Caspers,CaspersKoelink}. We remark that one needs to be careful when taking equations from these papers as there is a difference in convention: we prefer to use inner products linear on the right and functionals $\omega_{\xi,\eta}$ defined accordingly. That is why we choose to state explicitly used results with necessary changes, which we do in this section. \\
First, we can transport a left (resp.~right) matrix coefficient via $\mc{Q}_L$ (resp.~$\mc{Q}_R$). The following is a reformulation of \cite[Lemma 3.7, Lemma 3.9]{CaspersKoelink}.
\newpage
\begin{lemma}\label{lemat5}$ $
\begin{enumerate}[label=\arabic*)]
\item If $\xi,\eta\in\int_{\IrrG}^{\oplus}\msf{H}_\pi\md\mu(\pi)$, $\xi\in \Dom(D)$ and the vector field $(\eta_\pi\otimes\ov{D_\pi\xi_\pi})_{\pi\in\IrrG}$ is square integrable, then $M^L_{\xi,\eta}\in\mf{N}_{\vp}$ and
$\mc{Q}_L\Lambda_{\vp}(M^L_{\xi,\eta})=
\int_{\IrrG}^{\oplus}\eta_\pi\otimes \ov{D_\pi\xi_\pi}\md\mu(\pi)$.
\item
If $\xi,\eta\in\int_{\IrrG}^{\oplus}\msf{H}_\pi\md\mu(\pi)$, $\xi\in \Dom(E)$ and the vector field $(\eta_\pi\otimes\ov{E_\pi\xi_\pi})_{\pi\in\IrrG}$ is square integrable, then $M^R_{\xi,\eta}\in\mf{N}_{\psi}$ and $\mc{Q}_R\Lambda_{\psi}(M^R_{\xi,\eta})=
\int_{\IrrG}^{\oplus}\eta_\pi\otimes \ov{E_\pi\xi_\pi}\md\mu(\pi)$.
\end{enumerate}
\end{lemma}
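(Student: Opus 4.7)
The strategy is to use the dual characterisation of the GNS map $\Lvp$: for a vector $v\in\LdG$ and an element $x\in\Linf$, if
\[
\omega(x^*)=\ismaa{v}{\Lhvp(\lambda(\omega))}\qquad(\omega\in\Lj,\ \lambda(\omega)\in\mf{N}_{\hvp}),
\]
then $x\in\mf{N}_\vp$ and $\Lvp(x)=v$. This is part of the Kustermans--Vaes construction of $\hvp$: the vectors $\Lhvp(\lambda(\omega))$ form a dense subspace of $\LdG$, so $v$ is uniquely determined, and the bound $|\omega(x^*)|\leq\|v\|\,\|\Lhvp(\lambda(\omega))\|$ is precisely the condition for $x$ to lie in $\mf{N}_\vp$. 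I apply this with $x=M^L_{\xi,\eta}$ and $v:=\mc{Q}_L^*\zeta$, where
\[
\zeta:=\int_{\IrrG}^{\oplus}\eta_\pi\otimes\ov{D_\pi\xi_\pi}\md\mu(\pi);
\]
the square-integrability hypothesis says exactly $\zeta\in\int_{\IrrG}^{\oplus}\HS(\msf{H}_\pi)\md\mu(\pi)$, so $v\in\LdG$ is well defined.

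The verification of the pairing is a direct calculation. Unitarity of $\mc{Q}_L$ combined with Theorem \ref{PlancherelL}(2) gives
\[
\ismaa{v}{\Lhvp(\lambda(\omega))}=\ismaa{\zeta}{\mc{Q}_L\Lhvp(\lambda(\omega))}=\int_{\IrrG}\ismaa{\eta_\pi\otimes\ov{D_\pi\xi_\pi}}{(\omega\otimes\id)(U^\pi)D_\pi^{-1}}_{\HS}\md\mu(\pi).
\]
Under the canonical isomorphism $\msf{H}_\pi\otimes\ov{\msf{H}_\pi}\cong\HS(\msf{H}_\pi)$ sending $a\otimes\ov{b}\leftrightarrow\sima{a}{b}$, one has $\ismaa{\sima{a}{b}}{T}_{\HS}=\ismaa{a}{Tb}$. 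Setting $a=\eta_\pi$, $b=D_\pi\xi_\pi$ and $T=(\omega\otimes\id)(U^\pi)D_\pi^{-1}$, the cancellation $D_\pi^{-1}(D_\pi\xi_\pi)=\xi_\pi$ (valid because $\xi_\pi\in\Dom D_\pi$ so $D_\pi\xi_\pi\in\Dom D_\pi^{-1}$) reduces the fibrewise pairing to $\ismaa{\eta_\pi}{(\omega\otimes\id)(U^\pi)\xi_\pi}=(\omega\otimes\omega_{\eta_\pi,\xi_\pi})(U^\pi)$. Integrating against $\mu$ and pulling $\omega$ through the weak integral yields $\omega(M^R_{\eta,\xi})$, which equals $\omega((M^L_{\xi,\eta})^*)$ via the identity $(M^L_{\xi,\eta})^*=M^R_{\eta,\xi}$ noted right after the definition.

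For part 2, I would proceed completely analogously, replacing Theorem \ref{PlancherelL}(2) by Theorem \ref{PlancherelR}(2) and the Kustermans--Vaes characterisation of $\Lvp$ by the corresponding one for $\Lpsi$ and $\hpsi$. The Plancherel identity to use is $\mc{Q}_R J_{\hvp}J_\vp\Lhvps(\lambda(\omega))=\int_{\IrrG}^{\oplus}(\omega\otimes\id)(U^\pi)E_\pi^{-1}\md\mu(\pi)$; the unitary $J_{\hvp}J_\vp$ reindexes the testing vector without affecting the shape of the HS pairing, which again collapses to $(\omega\otimes\omega_{\eta_\pi,\xi_\pi})(U^\pi)$ after the cancellation $E_\pi^{-1}(E_\pi\xi_\pi)=\xi_\pi$. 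Integration then produces the required weak integral, matching $\omega((M^R_{\xi,\eta})^*)$ via the conjugate identity $(M^R_{\xi,\eta})^*=M^L_{\eta,\xi}$.

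The main technical point to handle is the domain and integrability bookkeeping: $D_\pi\xi_\pi$ must actually lie in the domain of the closure $(\omega\otimes\id)(U^\pi)D_\pi^{-1}$ (fibrewise, from $\xi_\pi\in\Dom D_\pi$ and boundedness of $(\omega\otimes\id)(U^\pi)$), and the integrand must be dominated by $\|\zeta\|\,\|\Lhvp(\lambda(\omega))\|$ so that the Kustermans--Vaes criterion is applicable. The latter is immediate from Cauchy--Schwarz in the direct integral combined with unitarity of $\mc{Q}_L$, and is exactly the bound needed to conclude both $M^L_{\xi,\eta}\in\mf{N}_\vp$ and the claimed formula for $\mc{Q}_L\Lvp(M^L_{\xi,\eta})$.
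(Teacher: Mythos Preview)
The paper does not give its own proof of this lemma; it simply records it as a reformulation of \cite[Lemma~3.7, Lemma~3.9]{CaspersKoelink}. Your argument for part~1 is correct and is precisely the Caspers--Koelink argument: pair the candidate image $\zeta$ against $\mc{Q}_L\Lhvp(\lambda(\omega))$, collapse the Hilbert--Schmidt inner product fibrewise using $D_\pi^{-1}D_\pi\xi_\pi=\xi_\pi$, and invoke the Kustermans--Vaes duality between $\Lvp$ and $\Lhvp$.

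Your sketch for part~2, however, is too quick and contains a mismatch. The fibrewise computation you describe yields
\[
\bigl\langle \zeta,\mc{Q}_R J_{\hvp}J_\vp\Lhvps(\lambda(\omega))\bigr\rangle
=\int_{\IrrG}(\omega\otimes\omega_{\eta_\pi,\xi_\pi})(U^\pi)\md\mu(\pi)
=\omega(M^R_{\eta,\xi}),
\]
not $\omega\bigl((M^R_{\xi,\eta})^*\bigr)=\omega(M^L_{\eta,\xi})$ as you claim; these differ because $M^R$ uses $U^\pi$ while $M^L$ uses $U^{\pi*}$. The phrase ``the unitary $J_{\hvp}J_\vp$ reindexes the testing vector without affecting the shape of the HS pairing'' hides the real work: there is no direct analogue of the identity $\omega(x^*)=\ismaa{\Lvp(x)}{\Lhvp(\lambda(\omega))}$ with $\vp,\hvp$ replaced by $\psi,\hpsi$. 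What one actually needs is the correct pairing formula for $\Lpsi$ against $J_{\hvp}J_\vp\Lhvps(\lambda(\omega))$, which involves the unitary antipode (or equivalently the right regular representation built from $\mrV$ rather than $\mrW$). In Caspers--Koelink this is handled by an explicit lemma identifying that pairing; once it is in place, the quantity $\omega(M^R_{\eta,\xi})$ is exactly what is required. So your strategy is right, but the bookkeeping in part~2 needs the correct dual characterisation of $\mf{N}_\psi$ spelled out rather than asserted by analogy.
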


Using the above result and the fact that $\mc{Q}_L,\mc{Q}_R$ are unitary, one can easily derive the following density results:

\begin{lemma}\label{lemat13}
$ $
\begin{enumerate}[label=\arabic*)]
\item Set $\{\Lvp(M^L_{\xi,\eta})\},$ where $\xi,\eta$ run over vectors in $\int_{\IrrG}^{\oplus}\msf{H}_\pi \md\mu(\pi)$ such that $\xi\in\Dom(D)$ and $(\eta_\pi\otimes \ov{D_\pi\xi_\pi})_{\pi\in\IrrG}$ is square integrable, is lineary dense in $\LdG$.
\item Set $\{\Lambda_{\psi}(M^R_{\xi,\eta})\}$, where $\xi,\eta$ run over vectors in $\int_{\IrrG}^{\oplus}\msf{H}_\pi \md\mu(\pi)$ such that $\xi\in\Dom(E)$ and $(\eta_\pi\otimes \ov{E_\pi\xi_\pi})_{\pi\in\IrrG}$ is square integrable, is lineary dense in $\LdG$.
\end{enumerate}
\end{lemma}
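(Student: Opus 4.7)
The plan is the following. Both parts follow the same template, so I will sketch part~1; part~2 is obtained by substituting $\mc{Q}_R, E, \psi$ for $\mc{Q}_L, D, \vp$ throughout, using Lemma \ref{lemat5}(2) instead of \ref{lemat5}(1). The overall strategy is to transport the question to the Plancherel picture through the unitary $\mc{Q}_L$ and then to use a spectral cut-off of the strictly positive selfadjoint operator $D$.

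First I would invoke Lemma \ref{lemat5}(1), which identifies
\[
\mc{Q}_L \Lvp(M^L_{\xi,\eta}) = \int_{\IrrG}^{\oplus} \eta_\pi \otimes \ov{D_\pi \xi_\pi} \md\mu(\pi)
\]
for every admissible pair $(\xi,\eta)$. As $\mc{Q}_L$ is unitary, density in $\LdG$ of the GNS images $\{\Lvp(M^L_{\xi,\eta})\}$ is equivalent to density, inside $\int_{\IrrG}^\oplus (\msf{H}_\pi \otimes \ov{\msf{H}_\pi}) \md\mu(\pi)$, of the vectors on the right-hand side.

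Next I would approximate an arbitrary elementary tensor field $\int_{\IrrG}^{\oplus} \alpha_\pi \otimes \ov{\beta_\pi} \md\mu(\pi)$, with sections $\alpha, \beta \in \int_{\IrrG}^{\oplus} \msf{H}_\pi \md\mu(\pi)$ of essentially bounded pointwise norm. Let $p_n$ denote the spectral projection of $D$ onto $[1/n, n]$; then $D^{-1} p_n$ is a bounded operator and $p_n \nearrow \I$ strongly. Setting $\xi_n := D^{-1} p_n \beta \in \Dom(D)$ gives $D \xi_n = p_n \beta$, and, thanks to the essential boundedness of $\alpha$, the field $(\alpha_\pi \otimes \ov{D_\pi \xi_{n,\pi}})$ is square integrable, so $(\xi_n, \alpha)$ is admissible. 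Dominated convergence then yields $\alpha \otimes \ov{p_n \beta} \to \alpha \otimes \ov{\beta}$ in the norm of the ambient direct integral.

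The remaining input is the density, inside $\int_{\IrrG}^{\oplus} (\msf{H}_\pi \otimes \ov{\msf{H}_\pi}) \md\mu(\pi)$, of elementary tensor fields with essentially bounded sections in $\int_{\IrrG}^{\oplus} \msf{H}_\pi \md\mu(\pi)$. This is a standard fact for direct integrals over a standard measure space with the canonical measurable field: pick a measurable sequence of vector fields forming an orthonormal basis on each fibre, expand a given vector field along this basis, and truncate to subsets of $\IrrG$ of finite measure in order to control square-integrability. I expect this last density statement (mostly measurability bookkeeping in the presence of varying fibre dimensions) to be the only real technical step; everything else is an immediate consequence of the Plancherel theorem and unitarity of $\mc{Q}_L$.
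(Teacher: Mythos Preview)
Your proposal is correct and follows exactly the route the paper has in mind: the paper does not give a detailed proof, remarking only that the result follows from Lemma~\ref{lemat5} together with unitarity of $\mc{Q}_L,\mc{Q}_R$, and your spectral cut-off argument is precisely how one fills in these details.
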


Consider an antilinear map\footnote{This map appears during a construction of the Radon-Nikodym derivative between $\psi$ and $\vp$, see \cite{TakesakiII}.}
\begin{equation}\label{eq4}
\Lambda_\psi( \mf{N}_\psi \cap {\mf{N}_\vp}^*)\ni \Lambda_\psi(x)
\mapsto
\Lvp(x^*)\in\LdG
\end{equation}
and define $T'$ to be its closure. Let $T'=J'{\nabla'}^{\frac{1}{2}}$ be the polar decomposition of $T'$. It is well known that $J'$ is antiunitary and ${\nabla'}^{\frac{1}{2}}$ is strictly positive and selfadjoint. In the next section we will describe these operators, for now let us recall how they look on the level of direct integrals.

\begin{proposition}\label{stw5}
We have $\mc{Q}_L J' \mc{Q}_R^*=\Sigma$ and $\mc{Q}_R {\nabla'}^{\frac{1}{2}}\mc{Q}_R^*=\int_{\IrrG}^{\oplus} D_\pi \otimes (E_\pi^{-1})^{\msf T} \md\mu(\pi)$.
\end{proposition}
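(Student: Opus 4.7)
Plan: I construct a closed antilinear operator $S'$ whose polar decomposition is visibly $\widetilde{J}\cdot\widetilde{\Delta}^{1/2}$ with $\widetilde{J}:=\mc{Q}_L^*\Sigma\mc{Q}_R$ and $\widetilde{\Delta}^{1/2}:=\mc{Q}_R^* A\,\mc{Q}_R$ (where $A:=\int_{\IrrG}^\oplus D_\pi\otimes (E_\pi^{-1})^{\msf{T}}\md\mu(\pi)$), verify $S'$ coincides with $T'$ on a dense set of matrix coefficient vectors, and finish by uniqueness of the polar decomposition.

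\textbf{Building the candidate.} A direct computation from the definition $B^{\msf T}\ov\xi=\ov{B^*\xi}$ gives $(B^{\msf T})^*=(B^*)^{\msf T}$, so $(E_\pi^{-1})^{\msf T}$ is strictly positive selfadjoint on $\ov{\msf{H}_\pi}$; hence each $D_\pi\otimes(E_\pi^{-1})^{\msf T}$ and its direct integral $A$ are strictly positive selfadjoint. Then $\widetilde{J}$ is antiunitary (composition unitary$\,\circ\,$antiunitary$\,\circ\,$unitary) and $\widetilde{\Delta}^{1/2}$ is strictly positive selfadjoint. The product $S':=\widetilde{J}\widetilde{\Delta}^{1/2}$ is closed antilinear and $|S'|^2=\widetilde{\Delta}^{1/2}\widetilde{J}^{-1}\widetilde{J}\widetilde{\Delta}^{1/2}=\widetilde{\Delta}$, so the polar decomposition of $S'$ is exactly $\widetilde{J}\cdot\widetilde{\Delta}^{1/2}$.

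\textbf{Matching on matrix coefficients.} Pick $\xi\in\Dom(E),\eta\in\Dom(D)$ such that both $(\eta_\pi\otimes\ov{E_\pi\xi_\pi})_\pi$ and $(\xi_\pi\otimes\ov{D_\pi\eta_\pi})_\pi$ are square integrable. Lemma \ref{lemat5}(2) gives $M^R_{\xi,\eta}\in\mf{N}_\psi$ with $\mc{Q}_R\Lpsi(M^R_{\xi,\eta})=\int^\oplus\eta_\pi\otimes\ov{E_\pi\xi_\pi}\md\mu(\pi)$, and Lemma \ref{lemat5}(1) applied to $M^L_{\eta,\xi}=(M^R_{\xi,\eta})^*$ gives $M^L_{\eta,\xi}\in\mf{N}_\vp$ with $\mc{Q}_L\Lvp(M^L_{\eta,\xi})=\int^\oplus\xi_\pi\otimes\ov{D_\pi\eta_\pi}\md\mu(\pi)$. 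Thus $M^R_{\xi,\eta}\in\mf{N}_\psi\cap{\mf{N}_\vp}^*$ and $T'\Lpsi(M^R_{\xi,\eta})=\Lvp(M^L_{\eta,\xi})$. Fibrewise, by selfadjointness of $E_\pi^{-1}$, $(E_\pi^{-1})^{\msf T}\ov{E_\pi\xi_\pi}=\ov{\xi_\pi}$, so $A(\int^\oplus\eta_\pi\otimes\ov{E_\pi\xi_\pi}\md\mu)=\int^\oplus D_\pi\eta_\pi\otimes\ov{\xi_\pi}\md\mu$ (square integrable by the second hypothesis), and applying $\Sigma$ yields $\int^\oplus\xi_\pi\otimes\ov{D_\pi\eta_\pi}\md\mu$. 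Unwinding $\mc{Q}_R,\mc{Q}_L$ gives $S'\Lpsi(M^R_{\xi,\eta})=\Lvp(M^L_{\eta,\xi})=T'\Lpsi(M^R_{\xi,\eta})$.

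\textbf{Equality and conclusion.} Let $\mc D:=\{\Lpsi(M^R_{\xi,\eta}):\xi,\eta\text{ as above}\}$, dense in $\LdG$ by Lemma \ref{lemat13}(2). I argue $\mc D$ is a core for $S'$ by transporting through $\mc{Q}_R$ and showing $\{\int^\oplus\eta_\pi\otimes\ov{E_\pi\xi_\pi}\md\mu\}$ is a core for $A$: algebraic tensor products of cores form cores for tensor products of positive selfadjoint operators, and these assemble across $\mu$-a.e.~$\pi$ into a core for the direct integral via a measurable selection argument (using that $\operatorname{Ran}(E_\pi)$ is a core for $E_\pi^{-1}$). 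Closedness of $T'$ then gives $S'\subseteq T'$. For the reverse inclusion $T'\subseteq S'$, I apply the parallel matching argument to $T'^{-1}$, which is the closure of $\Lvp(y)\mapsto\Lpsi(y^*)$ on $y\in\mf{N}_\vp\cap{\mf{N}_\psi}^*$, and identify it with $S'^{-1}=\widetilde{\Delta}^{-1/2}\widetilde{J}^{-1}$ on the symmetric dense family $\{\Lvp(M^L_{\xi,\eta})\}$ (now with the roles of $D$ and $E$ exchanged). This gives $T'=S'$, and uniqueness of the polar decomposition then yields $J'=\widetilde{J}=\mc{Q}_L^*\Sigma\mc{Q}_R$ and $(\nabla')^{1/2}=\widetilde{\Delta}^{1/2}=\mc{Q}_R^* A\mc{Q}_R$, which rearrange to the two claimed identities.

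The main obstacle is Step 3: the upgrade from $S'\subseteq T'$ to equality via the core argument and its symmetric counterpart. The algebraic content is standard, but the measurable-selection bookkeeping needed to lift the fibrewise core statement to $A$ and to verify simultaneously that both $\mc D$ and its $\varphi$-side analogue are cores of the respective operators is the technical crux.
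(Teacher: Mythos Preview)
The paper does not give its own proof of this proposition: it is recalled in the preliminaries section as \cite[Proposition 4.4, Proposition 4.5, Theorem 4.6]{CaspersKoelink}. Your overall strategy---define the candidate $S'=\widetilde J\widetilde\Delta^{1/2}$, match it with $T'$ on matrix coefficient vectors via Lemma~\ref{lemat5}, then close---is exactly the natural approach and is in the spirit of what Caspers--Koelink do.

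There is, however, a genuine gap in your Step~3. From the core claim for $S'$ you correctly obtain $S'\subseteq T'$. But your ``reverse inclusion'' does not deliver $T'\subseteq S'$. Two issues: first, the assertion that $T'^{-1}$ equals the \emph{closure} of $\Lvp(y)\mapsto\Lpsi(y^*)$ is itself nontrivial (on the pre-closure domains the two maps invert each other, but passing to closures need not preserve this; in the relative modular setup this identification is part of what one is trying to establish). Second, and more seriously, even granting that identification, running your core argument for $S'^{-1}$ and matching with $T'^{-1}$ yields $S'^{-1}\subseteq T'^{-1}$, which is \emph{equivalent} to $S'\subseteq T'$ (graphs of inverses are flips of graphs), not to the reverse inclusion. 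So the two halves of your argument prove the same containment twice.

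To close the gap you need either (a) a core for $T'$ itself among the matrix coefficient vectors---which requires input about $T'$ you do not yet have---or (b) an abstract argument that $S'\subseteq T'$ with both sides of the form (antiunitary)$\cdot$(strictly positive selfadjoint) forces equality. Option (b) does work: from $S'\subseteq T'$ one gets $U\widetilde\Delta^{1/2}\subseteq(\nabla')^{1/2}$ with $U=(J')^{-1}\widetilde J$ unitary; taking adjoints gives $\widetilde\Delta^{1/2}U^*\subseteq(\nabla')^{1/2}$, and combining these two inclusions with selfadjointness of both positive parts (a selfadjoint operator has no proper closed symmetric extension) forces $U=\I$ and $\widetilde\Delta^{1/2}=(\nabla')^{1/2}$. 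This is the missing ingredient; your measurable-selection core argument for $S'$ alone then suffices.
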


The above proposition is a combination of \cite[Proposition 4.4, Proposition 4.5, Theorem 4.6]{CaspersKoelink}. We finish this section with formulas expressing the action of modular automorphism groups on the matrix coefficients.

\begin{proposition}\label{stw4}
For each $\xi,\eta\in \int_{\IrrG}^{\oplus}\msf{H}_\pi \md\mu(\pi),t\in\RR$ the following holds:
\[
\begin{split}
\sigma^{\psi}_t(M^R_{\xi,\eta})=\nu^{\frac{1}{2}it^2}
\delta^{it}\,M^R_{E^{2it}\xi,D^{2it}\eta}&,\quad
\sigma^{\vp}_t(M^R_{\xi,\eta})=\nu^{\frac{1}{2}it^2}
\,M^R_{E^{2it}\xi,D^{2it}\eta}\, \delta^{it},\\
\sigma^{\psi}_t(M^L_{\xi,\eta})=\nu^{-\frac{1}{2}it^2}
M^L_{D^{2it}\xi,E^{2it}\eta}\, \delta^{-it}&,\quad
\sigma^{\vp}_t(M^L_{\xi,\eta})=\nu^{-\frac{1}{2}it^2}
 \delta^{-it}\,M^L_{D^{2it}\xi,E^{2it}\eta}.
\end{split}
\]
\end{proposition}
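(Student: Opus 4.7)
The four formulas are not independent, and I would begin by reducing to a single statement. Since modular automorphisms commute with the $\ast$-operation and $(M^R_{\xi,\eta})^*=M^L_{\eta,\xi}$, taking adjoints of the first two formulas (with $\xi,\eta$ relabelled) yields the last two. Moreover, using the standard Radon–Nikodym relation $\sigma^{\vp}_t(x)=\delta^{-it}\sigma^{\psi}_t(x)\delta^{it}$ on $\Linf$, the second formula follows from the first by conjugating and absorbing the factor. Hence it suffices to prove
\[
\sigma^{\psi}_t(M^R_{\xi,\eta})=\nu^{\frac{1}{2}it^2}\,\delta^{it}\,M^R_{E^{2it}\xi,\,D^{2it}\eta}.
\]

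To prove this, I would apply $\Lambda_{\psi}$ to both sides and compare the resulting vectors in $\LdG$. On the left, $\Lambda_{\psi}(\sigma^{\psi}_t(x))=\nabla_{\psi}^{it}\Lambda_{\psi}(x)$ together with Lemma \ref{lemat5}(2) gives
\[
\Lambda_{\psi}(\sigma^{\psi}_t(M^R_{\xi,\eta}))=\nabla_{\psi}^{it}\mc{Q}_R^{*}\!\int_{\IrrG}^{\oplus}\eta_\pi\otimes\overline{E_\pi\xi_\pi}\,\md\mu(\pi).
\]
On the right, since $\delta^{it}\in\Linf$ is bounded one has $\Lambda_{\psi}(\delta^{it}y)=\delta^{it}\Lambda_{\psi}(y)$, and a second application of Lemma \ref{lemat5}(2) to $M^R_{E^{2it}\xi,D^{2it}\eta}$ together with the identity $\overline{E_\pi^{1+2it}\xi_\pi}=(E_\pi^{-2it})^{\msf T}\overline{E_\pi\xi_\pi}$ (coming from the very definition of ${\cdot}^{\msf T}$) yields
\[
\nu^{\frac{1}{2}it^2}\,\delta^{it}\,\mc{Q}_R^{*}\!\int_{\IrrG}^{\oplus}\!\bigl(D_\pi^{2it}\otimes(E_\pi^{-2it})^{\msf T}\bigr)\bigl(\eta_\pi\otimes\overline{E_\pi\xi_\pi}\bigr)\md\mu(\pi).
\]
The density statement of Lemma \ref{lemat13}(2) allows one to restrict to $\xi,\eta$ for which all formal manipulations are justified (in particular $\xi_\pi\in\Dom(E_\pi^{1+2it})=\Dom(E_\pi)$) and then extend by continuity.

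Thus the proof reduces to the operator identity
\[
\mc{Q}_R\,\nabla_{\psi}^{it}\,\mc{Q}_R^{*}\;=\;\nu^{\frac{1}{2}it^2}\,(\mc{Q}_R\,\delta^{it}\,\mc{Q}_R^{*})\!\int_{\IrrG}^{\oplus}D_\pi^{2it}\otimes(E_\pi^{-2it})^{\msf T}\,\md\mu(\pi),
\]
and this is the genuinely non-trivial step. The cleanest route avoiding forward-reference to Section \ref{secoperators} uses \eqref{eq20} to write $\nabla_{\psi}^{it}=\hat{\delta}^{-it}P^{-it}$ and then decomposes each factor via $\mc{Q}_R$: the operators $\delta^{it}$ and $\hat{\delta}^{it}$ can be identified on the direct-integral side from the intertwining relations of Theorem \ref{PlancherelR}(3) (by choosing suitable $\omega\in\Lj$ and passing to strong limits, using Proposition \ref{stw7}). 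The main obstacle is keeping track of the various unitary twists coming from the convention \textbf{linear on the right}; in effect this is exactly the translation of \cite[Propositions 3.11, 4.4-4.6]{CaspersKoelink}, which one can invoke with the appropriate conjugations (and which explains why Caspers–Koelink are cited at this point of the preliminaries).
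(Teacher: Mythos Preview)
Your reduction is sound and matches the paper: taking adjoints passes between the $M^R$ and $M^L$ formulas, and the relation $\sigma^{\vp}_t=\delta^{-it}\sigma^{\psi}_t(\cdot)\delta^{it}$ passes between the $\psi$- and $\vp$-versions. Likewise, applying $\Lambda_\psi$ via Lemma~\ref{lemat5}(2) and reducing to the operator identity
\[
\mc{Q}_R\,\nabla_{\psi}^{it}\,\mc{Q}_R^{*}
=
\nu^{\frac{1}{2}it^2}\,(\mc{Q}_R\,\delta^{it}\,\mc{Q}_R^{*})
\int_{\IrrG}^{\oplus}D_\pi^{2it}\otimes(E_\pi^{-2it})^{\msf T}\,\md\mu(\pi)
\]
is exactly the right target.

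The gap is in how you propose to prove this identity. Factoring $\nabla_\psi^{it}=\hat{\delta}^{-it}P^{-it}$ and then trying to read off $\hat{\delta}^{it}$ and $P^{it}$ on the direct-integral side is circular in this paper's logical order: the direct-integral expressions for $\hat{\delta}^{it}$ and for $P^{it}$ (equivalently $\nabla_{\hpsi}^{it}$, $\nabla_{\hvp}^{it}$) are obtained only in Theorem~\ref{tw1} and Proposition~\ref{stw8}, whose proofs go through Lemma~\ref{lemat7}, which in turn \emph{uses} Proposition~\ref{stw4}. The intertwining relations of Theorem~\ref{PlancherelR}(3) will not save you here: they tell you how $\mc{Q}_R$ conjugates slices of $\mrW$ and $\chi(\mrV)$, i.e.\ elements of $\Linfd$ and $\Linfd'$, but they do not by themselves identify $\pi(\hat{\delta}_u^{it})$ in terms of $D_\pi,E_\pi$ (that identification is equation~\eqref{eq3}, again downstream of Proposition~\ref{stw4}), nor do they touch $P^{it}$ at all.

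The non-circular route --- and the one the paper sketches --- bypasses $\hat{\delta}$ and $P$ entirely. One uses the relative-modular operator $\nabla'$ from the preliminaries: Proposition~\ref{stw5} already gives
\[
\mc{Q}_R\,{\nabla'}^{it}\,\mc{Q}_R^{*}=\int_{\IrrG}^{\oplus}D_\pi^{2it}\otimes(E_\pi^{-2it})^{\msf T}\,\md\mu(\pi),
\]
and the Takesaki/Van Daele relation $\nu^{\frac{1}{2}it^2}\delta^{it}=\nabla_\psi^{it}\,{\nabla'}^{-it}$ (equivalently $\nabla_\psi^{it}=\nu^{\frac{1}{2}it^2}\delta^{it}\,{\nabla'}^{it}$) then yields the displayed operator identity immediately. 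This is what the paper means by ``derived using the formula for $\nabla'$ (Proposition~\ref{stw5}) and equation $\nu^{\frac{1}{2}it^2}\delta^{it}=\nabla_\psi^{it}\,{\nabla'}^{-it}$'', and it is also the content behind your final hand-wave to \cite{CaspersKoelink}. So your overall architecture is right; just replace the factorization $\hat{\delta}^{-it}P^{-it}$ by $\nu^{\frac{1}{2}it^2}\delta^{it}{\nabla'}^{it}$ and invoke Proposition~\ref{stw5}.
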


The formulas expressing the action of $\sigma^\vp,\sigma^\psi$ on $M^R_{\xi,\eta}$ are stated in \cite[Remark 2.2.11]{Caspers}. The other two follow by taking the adjoint. We note that they can be derived using the formula for $\nabla'$ (Proposition \ref{stw5}) and equation $\nu^{\frac{1}{2} it^2} \delta^{it}=\nabla_\psi^{it}\,{\nabla'}^{-it}$ (see \cite[Equations (29), (30), page 112]{TakesakiII} and the proof of \cite[Theorem 3.11]{Daele}).

\section{Relation between $\mc{Q}_L$ and $\mc{Q}_R$}\label{secrel}
In this section we will describe the polar decompostion of the closed operator $T'\colon \Lpsi(x)\mapsto\Lvp(x^*)$ (see equation \eqref{eq4}), namely we will derive a equation $T'=(\nu^{\frac{i}{8}} J_\vp)(J_\vp \nu^{\frac{i}{8}} \nabla_\vp^{-\frac{1}{2}} \delta^{-\frac{1}{2}} J_\vp)$. As a corollary we get an important relation between $\mc{Q}_L$ and $\mc{Q}_R$. Before we do that, let us justify through a formal calculation, why the above formula for $T'$ should hold:
\begin{equation}\label{eq14}\begin{split}
&\quad\;
T'\Lambda_\psi(x)=\Lvp(x^*)=J_\vp \nabla_\vp^{\frac{1}{2}}\Lambda_\vp(x)=
J_\vp \nabla_\vp^{\frac{1}{2}} J_\vp \sigma^{\vp}_{i/2}(\delta^{-\frac{1}{2}})^* J_\vp \Lvp(x\delta^{\frac{1}{2}})\\
&=
\nabla_\vp^{-\frac{1}{2}} (\nu^{-\frac{i}{4}} \delta^{-\frac{1}{2}})^*
J_\vp\Lambda_\psi (x)=
(\nu^{\frac{i}{8}} J_\vp)(J_\vp \nu^{\frac{i}{8}} \nabla_\vp^{-\frac{1}{2}} \delta^{-\frac{1}{2}} J_\vp)\Lambda_\psi(x).
\end{split}\end{equation}
We need to include the factor $\nu^{\frac{i}{8}}$ due to the following lemma:

\begin{lemma}\label{lemat4}
For all $s,t\in \RR$ operators $\nabla_\vp^s\circ\delta^t,\,\delta^t\circ\nabla_\vp^s$ are closable. We have equality $\nu^{\frac{ist}{2}} \nabla_\vp^s \delta^t=
\nu^{-\frac{ist}{2}}
\delta^t \nabla_\vp^s$ of strictly positive, selfadjoint operators, moreover
\[
(\nu^{\frac{ist}{2}} \nabla_\vp^s \delta^t)^{ir}=
\nu^{-\frac{ist}{2} r^2} \nabla_\vp^{isr} \delta^{itr}=
\nu^{\frac{ist}{2} r^2} \delta^{itr}\nabla_\vp^{isr}\quad(r\in\RR).
\]
\end{lemma}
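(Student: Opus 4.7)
Define
\[
W(r) := \nu^{-\frac{ir^2 st}{2}}\, \nabla_\vp^{irs}\, \delta^{irt}, \qquad r \in \RR.
\]
I first check that $W$ is a strongly continuous one-parameter unitary group. Rearranging the identity $\nabla_\vp^{ia}\delta^{ib} = \nu^{iab}\delta^{ib}\nabla_\vp^{ia}$ from \eqref{eq20} gives $\delta^{ir_1 t}\nabla_\vp^{ir_2 s} = \nu^{-ir_1 r_2 st}\nabla_\vp^{ir_2 s}\delta^{ir_1 t}$, and a direct substitution yields $W(r_1)W(r_2) = W(r_1+r_2)$; strong continuity is inherited from that of the imaginary powers of $\nabla_\vp$ and $\delta$. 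Applying the same relation once more gives $W(r) = \nu^{\frac{ir^2 st}{2}}\delta^{irt}\nabla_\vp^{irs}$. By Stone's theorem there is a unique strictly positive selfadjoint operator $Y$ with $Y^{ir} = W(r)$, which provides both expressions for the imaginary power claimed at the end of the statement, provided we identify $Y$ with $\nu^{\frac{ist}{2}}\nabla_\vp^s\delta^t$.

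For the identification, let $\mc{D}_0 := \bigcup_n E_n\LdG$, where $E_n$ is the spectral projection of $Y$ onto $[1/n,n]$; this is a core for every complex power of $Y$, and for $\xi \in \mc{D}_0$ the map $z \mapsto W(z)\xi$ extends to an entire $\LdG$-valued function with $W(-i)\xi = Y\xi$. On an appropriately chosen dense subspace $\mc{D}_1 \subseteq \mc{D}_0$ — for example, vectors of the form $f(\delta)g(\nabla_\vp)h(\delta)\eta$ for compactly supported smooth $f,g,h$ away from zero — I would verify, using the standard characterization of $\Dom(\delta^t)$ and $\Dom(\nabla_\vp^s)$ via holomorphic extensions of $r \mapsto \delta^{irt}\xi$ and $r \mapsto \nabla_\vp^{irs}\xi$, together with the relation $\nabla_\vp^{is}\delta^t = \nu^{st}\delta^t\nabla_\vp^{is}$ for real $t$ (obtained from \eqref{eq20} by spectral calculus), that such $\xi$ lie in $\Dom(\nabla_\vp^s\circ\delta^t) \cap \Dom(\delta^t\circ\nabla_\vp^s)$ and that
\[
\nu^{\frac{ist}{2}}\nabla_\vp^s\delta^t\,\xi \;=\; Y\xi \;=\; \nu^{-\frac{ist}{2}}\delta^t\nabla_\vp^s\,\xi.
\]
Since $\mc{D}_1$ remains a core for $Y$, both formal products are densely defined, and the closures of their restrictions to $\mc{D}_1$ equal $\nu^{\mp\frac{ist}{2}}Y$. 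Denoting the closures of $\nabla_\vp^s\circ\delta^t$ and $\delta^t\circ\nabla_\vp^s$ on their natural domains by $Z_1$ and $Z_2$, this gives $\nu^{-\frac{ist}{2}}Y \subseteq Z_1$ and $\nu^{\frac{ist}{2}}Y \subseteq Z_2$.

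The reverse inclusions follow from an adjoint argument. Selfadjointness of $\nabla_\vp^s$ and $\delta^t$ gives $(\nabla_\vp^s\circ\delta^t)^* \supseteq \delta^t\circ\nabla_\vp^s$, hence $Z_1^* \supseteq Z_2 \supseteq \nu^{\frac{ist}{2}}Y$. Taking adjoints and using $Y = Y^*$ yields $Z_1 = Z_1^{**} \subseteq (\nu^{\frac{ist}{2}}Y)^* = \nu^{-\frac{ist}{2}}Y$, which together with the earlier inclusion gives $Z_1 = \nu^{-\frac{ist}{2}}Y$; the symmetric argument gives $Z_2 = \nu^{\frac{ist}{2}}Y$. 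This establishes $\nu^{\frac{ist}{2}}\nabla_\vp^s\delta^t = Y = \nu^{-\frac{ist}{2}}\delta^t\nabla_\vp^s$ as strictly positive selfadjoint operators, and the imaginary-power formulas are immediate from $Y^{ir} = W(r)$. The main obstacle is the analytic continuation in the second paragraph: because $\nabla_\vp$ and $\delta$ do not commute there is no joint spectral calculus, so one has to track carefully, via the Weyl relation, that iterated functional-calculus vectors remain in the correct domains after applying $\nabla_\vp^s$ or $\delta^t$, and that the holomorphic extension of $z \mapsto \nabla_\vp^{izs}\delta^{izt}\xi$ really attains the value $\nabla_\vp^s\delta^t\xi$ at $z = -i$.
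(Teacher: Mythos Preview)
Your approach is sound and would recover the result, but the paper takes a much shorter route: it simply observes that the commutation relation $\nabla_\vp^{is}\delta^{it}=\nu^{ist}\delta^{it}\nabla_\vp^{is}$ is precisely the Weyl relation for the pair $(\nabla_\vp^{s},\delta^{t})$, and then invokes \cite[Example~3.1, Theorem~3.1]{QEF}, where all of the conclusions of the lemma are proved in that generality. So rather than building the Stone generator $Y$ by hand and identifying it with the formal product via core and adjoint arguments, the paper outsources exactly that work to a known reference.

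Your sketch is essentially a reconstruction of what that reference does. The Stone-generator step and the adjoint sandwich $Z_1^{**}\subseteq(\nu^{ist/2}Y)^*$ are correct and clean. Where you are honest about the difficulty---constructing a concrete core $\mc{D}_1\subseteq\mc{D}_0$ on which both formal products act and agree with $Y$, and verifying that vectors of the form $f(\delta)g(\nabla_\vp)h(\delta)\eta$ stay in the right domains after applying $\nabla_\vp^s$ or $\delta^t$---is precisely the technical heart of the cited theorem. It is doable (the Weyl relation gives $\nabla_\vp^{ir}f(\delta)=f(\nu^{r}\delta)\nabla_\vp^{ir}$, so the smeared vectors transform predictably), but it is not a two-line verification, and your proposal correctly flags that the holomorphic extension of $z\mapsto\nabla_\vp^{izs}\delta^{izt}\xi$ is the place where care is needed. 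If you want a self-contained argument, that paragraph needs to be filled in; otherwise, citing the Weyl-relation result as the paper does is the economical choice.
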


The above result is a consequence of the commutation relation $\nabla_\vp^{is}\delta^{it}=\nu^{ist} \delta^{it}\nabla_\vp^{is}\,(s,t\in\RR)$. Indeed, it follows that operators $\nabla_\vp^s,\delta^t$ satisfy the Weyl relation. Then Lemma \ref{lemat4} follows from \cite[Example 3.1, Theorem 3.1]{QEF}. The next lemma describes the action of the unbounded operator $\delta^t$.

\begin{lemma}\label{lemat17}$ $
\begin{enumerate}[label=\arabic*)]
\item Let $t\in\RR,x\in \mf{N}_\vp$ be such that $x\circ\delta^t$ is closable and $x\delta^t\in \mf{N}_\vp$. Then $J_\vp\Lvp(x)\in \Dom(\delta^t)$ and $\nu^{\frac{it}{2}}J_\vp \delta^t J_\vp \Lvp(x)=\Lvp(x\delta^t)$.
\item Let $t\in\RR,x\in \mf{N}_\psi$ be such that $x\circ\delta^t$ is closable and $x\delta^t\in \mf{N}_\psi$. Then $J_\vp\Lambda_\psi(x)\in \Dom(\delta^t)$ and $\nu^{\frac{it}{2}}J_\vp \delta^t J_\vp \Lambda_\psi(x)=\Lambda_\psi(x\delta^t)$.
\end{enumerate}
\end{lemma}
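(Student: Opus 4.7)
The plan is to deduce both parts of the lemma from the standard Tomita--Takesaki formula for right multiplication on the GNS space: if $b\in\Linf$ is entire $\sigma^\vp$-analytic and $x\in\mf{N}_\vp$ satisfies $xb\in\mf{N}_\vp$, then $\Lvp(xb) = J_\vp\sigma^\vp_{-i/2}(b^*)J_\vp\Lvp(x)$. Applied formally to $b = \delta^t$, together with $\sigma^\vp_{-i/2}(\delta^t)=\nu^{-it/2}\delta^t$ (a consequence of the Weyl commutation in Lemma~\ref{lemat4}) and the antilinearity of $J_\vp$ (which converts the scalar $\nu^{-it/2}$ into $\nu^{it/2}$), this gives exactly the desired identity. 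Part~2 will then follow by the identical argument with $\vp$ replaced by $\psi$, since the scalar relation $J_\psi = \nu^{i/4}J_\vp$ and the antilinearity of $J_\vp$ imply $J_\psi\delta^t J_\psi = J_\vp\delta^t J_\vp$.

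To handle the unboundedness of $\delta^t$, I would approximate by the bounded selfadjoint elements $c_\epsilon := \delta^t e^{-\epsilon(\log\delta)^2} = f_\epsilon(\delta)\in\Linf$ for $\epsilon > 0$. Using the functional identity $\sigma^\vp_u(g(\delta)) = g(\nu^u\delta)$, obtained by lifting $\sigma^\vp_s(\delta)=\nu^s\delta$ through spectral calculus, one computes $\sigma^\vp_u(c_\epsilon) = \nu^{ut}\delta^t e^{-\epsilon(\log\delta + u\log\nu)^2}$; this is a bounded operator for every $u\in\CC$ because the real part of the exponent is a downward-opening parabola in $\log\delta$. Hence $c_\epsilon$ is entire $\sigma^\vp$-analytic, and the standard Tomita formula yields
\[
\Lvp(xc_\epsilon) = \nu^{it/2}\, J_\vp\bigl(\delta^t g_\epsilon(\delta)\bigr)J_\vp \Lvp(x), \qquad g_\epsilon(\lambda) := e^{-\epsilon(\log\lambda - i\log\nu/2)^2},
\]
where $g_\epsilon$ is uniformly bounded in $\epsilon$ (by $e^{\epsilon\log^2\nu/4}$) and tends to $1$ pointwise as $\epsilon \to 0$.

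Passing to the limit $\epsilon \to 0$ proceeds as follows. On the left, factor $xc_\epsilon = (x\delta^t)h_\epsilon$ with $h_\epsilon := e^{-\epsilon(\log\delta)^2}\in\Linf$, which is also entire $\sigma^\vp$-analytic by the same parabola argument. A second application of the standard formula to the legitimate pair $(x\delta^t, h_\epsilon)$ gives $\Lvp(xc_\epsilon) = J_\vp\sigma^\vp_{-i/2}(h_\epsilon)J_\vp \Lvp(x\delta^t)$, and since $J_\vp\sigma^\vp_{-i/2}(h_\epsilon)J_\vp$ converges strongly to $\I$, the left side converges to $\Lvp(x\delta^t)$ in $\LdG$. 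On the right, introduce $\eta_\epsilon := (J_\vp g_\epsilon(\delta)J_\vp)\Lvp(x)$; since $\lambda^t g_\epsilon(\lambda)$ is bounded, each $\eta_\epsilon$ lies in $\Dom(J_\vp\delta^t J_\vp)$, and $(J_\vp\delta^t J_\vp)\eta_\epsilon = J_\vp(\delta^t g_\epsilon(\delta))J_\vp\Lvp(x)$ --- the right-hand side of the displayed equation up to the scalar $\nu^{it/2}$. With $\eta_\epsilon \to \Lvp(x)$ strongly and $(J_\vp\delta^t J_\vp)\eta_\epsilon$ convergent, closedness of the positive selfadjoint operator $J_\vp\delta^t J_\vp$ forces $\Lvp(x) \in \Dom(J_\vp\delta^t J_\vp)$, i.e.\ $J_\vp\Lvp(x)\in\Dom(\delta^t)$, and pins down the limit as $\nu^{it/2}J_\vp\delta^t J_\vp\Lvp(x)$. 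The main obstacle is rigorously establishing entire $\sigma^\vp$-analyticity of $c_\epsilon$ and $h_\epsilon$ and computing $\sigma^\vp_{-i/2}$ of these elements; this amounts to extending the Weyl identity $\sigma^\vp_s(\delta) = \nu^s\delta$ from real parameters to all of $\CC$ via analytic continuation in the functional-calculus picture, controlled by the parabola boundedness estimate.
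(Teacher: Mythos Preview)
Your proof is correct and takes a genuinely different route from the paper's. The paper smooths the \emph{vector}: it replaces $x$ by $x_n=\sqrt{n/\pi}\int_\RR e^{-np^2}x\delta^{ip}\,\md p$, uses the bounded right-multiplication formula $\Lvp(x\delta^{ip})=J_\vp\,\nu^{-p/2}\delta^{-ip}J_\vp\Lvp(x)$ to compute $\Lvp(x_n)$ and $\Lvp(x_n\delta^t)$, observes that $r\mapsto\delta^{ir}J_\vp\Lvp(x_n)$ extends to an entire $\LdG$-valued function (so $J_\vp\Lvp(x_n)\in\Dom(\delta^t)$), and then passes to the limit via closedness of $\delta^t$. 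You instead smooth the \emph{operator}: you replace $\delta^t$ by the bounded, entire $\sigma^\vp$-analytic element $c_\epsilon=\delta^t e^{-\epsilon(\log\delta)^2}$ and invoke the Tomita right-multiplication formula twice (once for $c_\epsilon$, once for $h_\epsilon$) before closing.

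Both arguments ultimately rest on the Weyl relation $\sigma^\vp_s(\delta^{it})=\nu^{ist}\delta^{it}$ and on closedness of $\delta^t$, so neither is deeper than the other. Your approach packages the analytic continuation into the single verification that $u\mapsto f_\epsilon(\nu^u\delta)$ is weak$^*$-entire and locally bounded (your ``parabola'' estimate), which is clean and reusable; the paper's approach stays closer to the one-parameter unitary group $(\delta^{ir})_r$ and avoids having to compute $\sigma^\vp_{-i/2}$ of the Gaussian cutoffs explicitly. Your reduction of part 2 to part 1 via $J_\psi\delta^tJ_\psi=J_\vp\delta^tJ_\vp$ is also fine, though the paper simply notes that the identical argument runs with $\psi$ in place of $\vp$ (the commutation $\nabla_\psi^{is}\delta^{it}=\nu^{ist}\delta^{it}\nabla_\psi^{is}$ being available).
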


\begin{proof}
We prove only the first assertion, the second one can be derived analogously. Take $x\in\mf{N}_\vp,t\in\RR$ which satisfy conditions of the lemma and define
\[
x_n=\sqrt{\tfrac{n}{\pi}} \int_{\RR} e^{-np^2}x \delta^{ip} \md p\in\Linf\quad(n\in\NN)
\]
(the above weak integral converges in \swot). Operator $x_n\circ\delta^t$ is closable and we have
\begin{equation}\label{eq11}
x_n\delta^t=\sqrt{\tfrac{n}{\pi}} \int_{\RR} e^{-np^2}
(x\delta^t) \delta^{ip} \md p=
\sqrt{\tfrac{n}{\pi}} \int_{\RR} e^{-n(p+it)^2}
x \delta^{ip} \md p.
\end{equation}
Clearly $x_n,x_n\delta^t\in\mf{N}_{\vp}$ and due to the Hille's theorem
\[\begin{split}
&\quad\;
\Lvp(x_n)=\sqrt{\tfrac{n}{\pi}} \int_{\RR}e^{-np^2} \Lvp(x \delta^{ip}) \md p=
\sqrt{\tfrac{n}{\pi}} J_{\vp}\int_{\RR}e^{-np^2} \nu^{-\frac{p}{2}}\delta^{-ip}J_{\vp}\Lvp(x ) \md p,
\end{split}\]
similarly thanks to the equation \eqref{eq11} we have
\[
\Lvp(x_n\delta^t)=
\sqrt{\tfrac{n}{\pi}} J_{\vp}\int_{\RR}e^{-np^2} \nu^{-\frac{p}{2}}\delta^{-ip}J_{\vp}\Lvp(x\delta^t ) \md p=
\sqrt{\tfrac{n}{\pi}} J_{\vp}\int_{\RR}e^{-n(p-it)^2} \nu^{-\frac{p}{2}}\delta^{-ip}J_{\vp}\Lvp(x ) \md p.
\]
Consequently, $\Lvp(x_n)\xrightarrow[n\to\infty]{}\Lvp(x)$ and $\Lvp(x_n \delta^t)\xrightarrow[n\to\infty]{} \Lvp(x\delta^t)$. For each $r\in\RR$ we have
\[\begin{split}
&\quad\;
\delta^{ir} J_{\vp}\Lvp(x_n)=
\sqrt{\tfrac{n}{\pi}} \int_{\RR}e^{-np^2} 
\nu^{-\frac{p}{2}}\delta^{-i(p-r)}
J_{\vp}\Lvp(x ) \md p=f_n(r),
\end{split}\]
where $f_n$ is an entire function
\[
f_n\colon \CC\ni z \mapsto
\sqrt{\tfrac{n}{\pi}} \int_{\RR}e^{-n(p+z)^2} \nu^{-\frac{p+z}{2}}\delta^{-ip} J_{\vp}\Lvp(x)\md p\in\LdG.
\]
From the above follows that $J_\vp\Lvp(x_n)\in \Dom(\delta^z)$  for all $z\in \CC$ and $\delta^z J_\vp \Lvp(x_n)=f_n(-iz)$. Let us show that the sequence $(\delta^t J_\vp\Lvp(x_n))_{n\in\NN}$ converges to $\nu^{\frac{it}{2}} J_\vp\Lvp(x\delta^t)$:
\[\begin{split}
&\quad\;
\delta^t J_\vp \Lvp(x_n)=f_n(-it)=
\sqrt{\tfrac{n}{\pi}} \int_{\RR} e^{-n(p-it)^2} \nu^{-\frac{p-it}{2}}
\delta^{-ip}J_\vp \Lvp(x)\md p\\
&=
\nu^{\frac{it}{2}}
\sqrt{\tfrac{n}{\pi}} \int_{\RR} e^{-n(p-it)^2} \nu^{-\frac{p}{2}}
\delta^{-ip}J_\vp \Lvp(x)\md p=
\nu^{\frac{it}{2}} J_\vp \Lvp(x_n \delta^t)
\xrightarrow[n\to\infty]{} \nu^{\frac{it}{2}} J_\vp \Lvp(x\delta^t).
\end{split}\]
Norm closedness of $\delta^t$ implies $J_\vp \Lvp(x)\in \Dom(\delta^t)$ and $\delta^t J_\vp \Lvp(x)=\nu^{\frac{it}{2}} J_\vp \Lvp(x\delta^t)$.
\end{proof}

In what follows we introduce a space $\mc{D}_0$ of sufficiently nice vectors on which calculation \eqref{eq14} is justified and which forms a core for the operators involved. First, define
\[
\delta_{n,z}=\sqrt{\tfrac{n}{\pi}} \int_{\RR} e^{-nt^2}\nu^{zt} \delta^{it} \md t\in \Linf\quad(n\in\NN,z\in\CC).
\]
Note that for each $z\in\CC$, the sequence $(\delta_{n,z})_{n\in\NN}$ is bounded and converges to $\I$ in \sot. Next, for $x\in \mf{N}_\vp\cap{\mf{N}_\vp}^*\cap \mf{N}_\psi\cap {\mf{N}_\psi}^*, k\in\NN,A=(A_1,A_2)\in \CC^2$ define 
\[
x_{k,A}=\tfrac{k}{\pi} \int_{\RR}\int_{\RR}
e^{-k(t-A_1)^2-k(s-A_2)^2}\sigma^{\vp}_{t}\circ \sigma^{\psi}_{s}(x)\md t \md s \in\Linf.
\]
Finally, define a subspace $\mc{D}_0$ via
\[
\mc{D}_0=\lin\{\Lambda_\psi(\delta_{n,z}x_{k,A} \delta_{m,w})\,|\, 
x,x^*\in \mf{N}_\vp\cap \mf{N}_\psi, n,m,k\in\NN,A\in \CC^2,z,w\in \CC\}.
\]

\begin{lemma}\label{lemat16}$ $
\begin{itemize}
\item The subspace $\mc{D}_0$ is a core for $\nabla_\vp^{-\frac{1}{2}}$. Moreover, for $\xi\in \Dom(\nabla_\vp^{-\frac{1}{2}})$ we can find a sequence $(\xi_p)_{p\in\NN}$ in
\[
\{\Lambda_\psi(x_{k,A} \delta_{m,w})\,|\, 
x,x^*\in \mf{N}_\vp\cap \mf{N}_\psi, m,k\in\NN,A\in \CC^2,w\in\CC\}
\]
such that $\xi_p\xrightarrow[p\to\infty]{} \xi$ and $\nabla_\vp^{-\frac{1}{2}}\xi_p\xrightarrow[p\to\infty]{} \nabla_\vp^{-\frac{1}{2}}\xi$
\item Each element of $\mc{D}_0$ can be written as $\Lambda_\psi(x)$ for some $x\in\Linf$ such that $x,x^*\in \mf{N}_\vp\cap \mf{N}_\psi\cap \bigcap_{z\in\CC}\Dom(\sigma^\psi_z)$. Moreover, $\sigma^{\psi}_z(x)\in \mf{N}_\psi$ and $\Lambda_\psi(x^*),\Lambda_\psi(\sigma^{\psi}_z(x))\in\mc{D}_0$. Next, $\Lambda_\psi(x)\in \bigcap_{z\in\CC} \Dom(\nabla_\psi^z)$ and $
\nabla_\psi^{iz}\Lambda_\psi(x)=
\Lambda_{\psi}(\sigma_z^{\psi}(x))$.
\item For all $z,w\in\CC,\Lambda_\psi(x)\in\mc{D}_0$ the operator $\delta^z\circ x \circ\delta^w$ is closable and after closure belongs to $\mf{N}_\psi\cap \mf{N}_\vp$.
\item We have $J_\vp\mc{D}_0= \mc{D}_0$.
\end{itemize}
\end{lemma}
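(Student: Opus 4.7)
The plan is to handle the four items in turn; (2), (3), and (4) follow rather mechanically from the Gaussian-mollifier construction of $\delta_{n,z}$ and $x_{k,A}$ combined with Lemma \ref{lemat17} and the commutation relations \eqref{eq20}, while (1) is the main work. I rely throughout on standard properties of Gaussian convolutions: $\delta_{n,z}$ is bounded, entire analytic for $\delta^{it}$, $\sigma^\vp$, and $\sigma^\psi$ (the latter two via \eqref{eq20}), with analytic continuations again of the form $\delta_{n,\cdot}$, and $\delta_{n,0}$ converges to $\I$ in $\sot$; analogously $x_{k,A}$ is entire for $\sigma^\vp$ and $\sigma^\psi$ with continuations of the same form, and $x_{k,(0,0)}\to x$ appropriately. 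These properties are preserved under finite products.

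For (1), given $\xi\in\Dom(\nabla_\vp^{-\frac{1}{2}})$, I would produce an approximating sequence of the desired shape in three stages. First, a spectral cutoff in $\nabla_\vp$ reduces to $\xi$ lying in a bounded spectral subspace, on which $\nabla_\vp^{-\frac{1}{2}}$ is bounded and norm convergence upgrades automatically to graph-norm convergence. Second, such $\xi$ is approximated in norm by $\Lambda_\psi(y)$ with $y,y^*\in\mf{N}_\vp\cap\mf{N}_\psi$: density of $\Lambda_\psi(\mf{N}_\psi\cap\mf{N}_\psi^*)$ in $\LdG$ combined with a cutoff by a bounded Borel function of $\delta$ (which converts integrability against $\psi=\vp(\cdot\,\delta)$ into integrability against $\vp$) provides the required simultaneous ideal membership. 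Third, apply the smoothing $y\mapsto y_{k,(0,0)}\delta_{m,w}$ and send $k,m\to\infty$ with $w$ chosen so that the resulting vector lies in $\Dom(\nabla_\vp^{-\frac{1}{2}})$; Lemma \ref{lemat17} controls the action of $\delta^w$ on $\Lambda_\psi(\cdot)$, and hence the action of $\nabla_\vp^{-\frac{1}{2}}$ on $\Lambda_\psi(y_{k,(0,0)}\delta_{m,w})$, and convergence follows from the mollifier properties together with the spectral cutoff. The resulting vectors have the shape claimed in (1), so $\mc{D}_0$ is a core.

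For (2), let $y=\delta_{n,z}x_{k,A}\delta_{m,w}$ be a generator of $\mc{D}_0$. The commutation \eqref{eq20} together with the Gaussian-convolution structure shows each factor is entire analytic for $\sigma^\psi$, with $\sigma^\psi_u(\delta_{n,z})$ and $\sigma^\psi_u(x_{k,A})$ again of the respective forms (with $u$-shifted parameters) for all $u\in\CC$; hence $y$ is entire for $\sigma^\psi$ and $\Lambda_\psi(\sigma^\psi_u(y))\in\mc{D}_0$. A direct inspection using $\sigma^\vp_t(x)^*=\sigma^\vp_t(x^*)$ for real $t$ together with $\delta_{n,z}^*=\delta_{n,-\bar z}$ yields $\Lambda_\psi(y^*)\in\mc{D}_0$. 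Membership of $y, y^*, \sigma^\psi_u(y)$ in $\mf{N}_\vp\cap\mf{N}_\psi$ follows from $x,x^*\in\mf{N}_\vp\cap\mf{N}_\psi$, boundedness of the mollifiers, and preservation of these ideals by $\sigma^\vp,\sigma^\psi$. The identity $\nabla_\psi^{iu}\Lambda_\psi(y)=\Lambda_\psi(\sigma^\psi_u(y))$ for entire $y$ is the standard Tomita-Takesaki identity. For (3), $\delta_{n,z}$ lies in every complex power of $\delta$ with $\delta^{z'}\delta_{n,z}=\delta_{n,z+z'}$ after closure; this gives closability of $\delta^{z_1}\circ y\circ\delta^{z_2}$ with bounded closure, and Lemma \ref{lemat17} ensures the closure lies in $\mf{N}_\vp\cap\mf{N}_\psi$. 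For (4), writing $J_\vp=\nu^{-i/4}J_\psi$ and using $J_\psi\Lambda_\psi(y)=\Lambda_\psi(\sigma^\psi_{i/2}(y)^*)$ for entire $y$ gives $J_\vp\Lambda_\psi(y)=\nu^{-i/4}\Lambda_\psi(\sigma^\psi_{-i/2}(y^*))$; by (2), both $y\mapsto y^*$ and $y\mapsto\sigma^\psi_{-i/2}(y)$ keep $\Lambda_\psi$ inside $\mc{D}_0$, so $J_\vp\mc{D}_0\subseteq\mc{D}_0$, and $J_\vp^2=\I$ upgrades this to equality.

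The main obstacle is the second stage of (1): producing an initial approximation $\Lambda_\psi(y)$ of $\xi$ with $y, y^*$ simultaneously in all four ideals $\mf{N}_\vp\cap\mf{N}_\vp^*\cap\mf{N}_\psi\cap\mf{N}_\psi^*$ while controlling the behaviour under $\nabla_\vp^{-\frac{1}{2}}$. The core difficulty is that $\nabla_\vp^{-\frac{1}{2}}$ acts naturally on $\Lvp$, not on $\Lambda_\psi$; one must mediate via the Radon--Nikodym role of $\delta$, which is precisely why the right mollifier $\delta_{m,w}$ appears in the approximating vectors of (1) and why Lemma \ref{lemat17} was established earlier in the section.
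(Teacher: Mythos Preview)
The paper itself skips this proof entirely (``A proof of the above lemma requires only standard reasoning, hence will be skipped''), so there is nothing to compare your approach against directly. Your treatment of items (2)--(4) is correct and in the spirit the author clearly has in mind: the Gaussian mollifiers $\delta_{n,z}$ and $x_{k,A}$ are entire for $\sigma^\psi$ (and $\sigma^\vp$) with analytic continuations of the same form, adjoints and $\delta$-multiplications shift parameters, and Lemma~\ref{lemat17} together with \eqref{eq20} handles the rest.

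Your argument for (1), however, has a genuine gap. After the spectral cutoff $\xi\rightsquigarrow\chi_{[a,b]}(\nabla_\vp)\xi$ you correctly observe that $\nabla_\vp^{-1/2}$ is bounded \emph{on that spectral subspace}. But in your second stage the approximants $\Lambda_\psi(y)$, and in your third stage the smoothed vectors $\Lambda_\psi(y_{k,(0,0)}\delta_{m,w})$, lie \emph{outside} that spectral subspace. So the implication ``norm convergence $\Rightarrow$ graph-norm convergence'' is not available: you would need $\nabla_\vp^{-1/2}$ to act boundedly on the approximants, not on the limit. You acknowledge this as the ``main obstacle'' in your final paragraph but do not resolve it; the appeal to ``Lemma~\ref{lemat17} \dots\ and convergence follows from the mollifier properties together with the spectral cutoff'' is not a proof, because the spectral cutoff has already been spent.

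A cleaner and standard route avoids the cutoff entirely. Observe that the set
\[
S=\{\Lambda_\psi(x_{k,A}\delta_{m,w})\,:\,x,x^*\in\mf N_\vp\cap\mf N_\psi,\ k,m\in\NN,\ A\in\CC^2,\ w\in\CC\}
\]
is dense in $\LdG$ (send $A\to(0,0)$, $w\to 0$, $k,m\to\infty$, and use that the Tomita algebra for $\psi$ is dense together with a $\delta$-cutoff to reach $\mf N_\vp\cap\mf N_\psi$), and is invariant under $(\nabla_\vp^{it})_{t\in\RR}$ up to scalars: using $\nabla_\vp^{it}\Lambda_\psi(y)=\nu^{-t/2}\Lambda_\psi(\sigma^\vp_t(y))$ for $y\in\mf N_\psi$ (cf.\ \cite[Remark~5.2]{Daele}) together with $\sigma^\vp_t(x_{k,A})=x_{k,A+(t,0)}$ and $\sigma^\vp_t(\delta_{m,w})=\delta_{m,w+it}$. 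A dense subspace invariant under a strongly continuous one-parameter unitary group is automatically a core for every complex power of its generator; this gives the core property for $\nabla_\vp^{-1/2}$ without any spectral-cutoff bookkeeping, and produces the required approximating sequence in $S$ directly.
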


A proof of the above lemma requires only standard reasoning, hence will be skipped. In the next two lemmas we prove properties of $\mc{D}_0$ which allows us to derive the polar decomposition of $T'$.

\begin{lemma}\label{lemat3}
The subspace $\mc{D}_0$ is a core for $\nu^{-\frac{i}{4}}J_\vp   \delta^{-\frac{1}{2}} J_\vp$. We have
\[
\nu^{-\frac{i}{4}}J_\vp  \delta^{-\frac{1}{2}} J_\vp \Lambda_\psi(x)=\Lvp(x)
\]
for all $x\in\mf{N}_\vp\cap\mf{N}_\psi$ such that $x\circ\delta^{-\frac{1}{2}}$ is closable and $x\delta^{-\frac{1}{2}}\in \mf{N}_\psi$. Moreover, the operator
\[
(J_\vp\nabla_\vp^{\frac{1}{2}}) \circ (\nu^{-\frac{i}{4}}J_\vp   \delta^{-\frac{1}{2}} J_\vp )=
\nu^{\frac{i}{4}} (\nabla_\vp^{-\frac{1}{2}}\circ\delta^{-\frac{1}{2}})J_\vp
\]
is closable and $\mc{D}_0$ is a core for its closure $\nu^{\frac{i}{4}} \nabla_\vp^{-\frac{1}{2}}\delta^{-\frac{1}{2}}J_\vp$.
\end{lemma}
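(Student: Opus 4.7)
The plan is to derive both assertions from Lemma \ref{lemat17}(2) and the standard Radon--Nikodym identification $\Lambda_\psi(y)=\Lvp(y\delta^{1/2})$ between $\Lambda_\psi$ and $\Lvp$, combined with the regularity of vectors in $\mc{D}_0$ supplied by Lemma \ref{lemat16}.

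The identity $\nu^{-\frac{i}{4}}J_\vp\delta^{-\frac{1}{2}}J_\vp\Lambda_\psi(x)=\Lvp(x)$ for $x\in\mf{N}_\vp\cap\mf{N}_\psi$ with $x\circ\delta^{-\frac{1}{2}}$ closable and $x\delta^{-\frac{1}{2}}\in\mf{N}_\psi$ follows by applying Lemma \ref{lemat17}(2) with $t=-\tfrac{1}{2}$: this gives $J_\vp\Lambda_\psi(x)\in\Dom(\delta^{-\frac{1}{2}})$ and $\nu^{-\frac{i}{4}}J_\vp\delta^{-\frac{1}{2}}J_\vp\Lambda_\psi(x)=\Lambda_\psi(x\delta^{-\frac{1}{2}})$. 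Writing $y:=x\delta^{-\frac{1}{2}}\in\mf{N}_\psi$, one has $y\delta^{\frac{1}{2}}=x\in\mf{N}_\vp$, so the Radon--Nikodym relation gives $\Lambda_\psi(y)=\Lvp(y\delta^{\frac{1}{2}})=\Lvp(x)$, as required.

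Every generator $\Lambda_\psi(\delta_{n,z}x_{k,A}\delta_{m,w})$ of $\mc{D}_0$ satisfies these hypotheses by the third bullet of Lemma \ref{lemat16} (applied with $w=-\tfrac{1}{2}$); hence $\mc{D}_0\subseteq\Dom(\nu^{-\frac{i}{4}}J_\vp\delta^{-\frac{1}{2}}J_\vp)$ and the action is given by the displayed formula. For the core property, using $J_\vp\mc{D}_0=\mc{D}_0$ (last bullet of Lemma \ref{lemat16}) and antiunitarity of $J_\vp$, it suffices to show that $\mc{D}_0$ is a core for $\delta^{-\frac{1}{2}}$. Given $\xi\in\Dom(\delta^{-\frac{1}{2}})$ one approximates $\xi$ in $\LdG$ by vectors $\Lambda_\psi(y)$ with $y,y^*\in\mf{N}_\vp\cap\mf{N}_\psi$, smoothens through $\sigma^{\vp}_t\sigma^{\psi}_s$ to reach elements of the form $y_{k,A}$, and finally sandwiches between Gaussian mollifiers $\delta_{n,z}$ and $\delta_{m,w}$; the rightmost factor $\delta_{m,w}$ translates, via Lemma \ref{lemat17}(2), into right multiplication by $\delta^{it}$-integrals on the GNS level and yields convergence in the graph norm of $\delta^{-\frac{1}{2}}$.

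For the moreover part, the antilinearity of $J_\vp$ turns the scalar $\nu^{-\frac{i}{4}}$ into $\nu^{\frac{i}{4}}$ upon commuting to the left, and the Tomita relation $J_\vp\nabla_\vp^{\frac{1}{2}}J_\vp=\nabla_\vp^{-\frac{1}{2}}$ yields $(J_\vp\nabla_\vp^{\frac{1}{2}})\circ(\nu^{-\frac{i}{4}}J_\vp\delta^{-\frac{1}{2}}J_\vp)=\nu^{\frac{i}{4}}(\nabla_\vp^{-\frac{1}{2}}\circ\delta^{-\frac{1}{2}})J_\vp$; closability and identification of the closure as $\nu^{\frac{i}{4}}\nabla_\vp^{-\frac{1}{2}}\delta^{-\frac{1}{2}}J_\vp$ follow directly from Lemma \ref{lemat4}. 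The main obstacle is the final core assertion: one must show that the joint mollifier structure of $\mc{D}_0$ provides analyticity simultaneously for the two non-commuting one-parameter groups $\nabla_\vp^{it}$ and $\delta^{it}$. This requires combining Gaussian-mollifier approximations in both directions and carefully tracking the scalar factors produced by the Weyl commutation of Lemma \ref{lemat4}; once this is done, one approximates any $\xi\in\Dom(\nu^{\frac{i}{4}}\nabla_\vp^{-\frac{1}{2}}\delta^{-\frac{1}{2}}J_\vp)$ in the graph norm following the template of the first bullet of Lemma \ref{lemat16}.
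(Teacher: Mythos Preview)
Your overall strategy matches the paper's: use Lemma~\ref{lemat17}(2) for the displayed identity, reduce the core questions to cores for $\delta^{-1/2}$ and for $\nabla_\vp^{-1/2}\delta^{-1/2}$ via $J_\vp\mc{D}_0=\mc{D}_0$, and invoke Lemma~\ref{lemat4} for closability. However, the core arguments contain a genuine error and are too sketchy at the crucial points.

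For the core of $\delta^{-1/2}$ you claim that the \emph{rightmost} mollifier $\delta_{m,w}$, translated via Lemma~\ref{lemat17}(2) into $J_\vp\delta^{it}J_\vp$ acting on the GNS level, yields convergence in the graph norm of $\delta^{-1/2}$. This does not work: the operators $J_\vp\delta^{it}J_\vp$ lie in $\Linf'$ and hence commute with $\delta^{-1/2}$, so the rightmost factor gives no control over $\delta^{-1/2}$. What you need is the \emph{leftmost} factor: $\Lambda_\psi(\delta_{n,0}\,y)=\delta_{n,0}\Lambda_\psi(y)$ on the GNS level, and $\delta^{-1/2}\delta_{n,0}$ is a bounded operator (analyticity of $\delta_{n,0}$ for the group $(\delta^{is})_{s\in\RR}$). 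The paper's argument is then clean: since $\bigcup_n\delta_{n,0}\LdG$ is already a core for $\delta^{-1/2}$, it suffices to approximate an arbitrary $\eta\in\LdG$ (not in graph norm, just in $\LdG$) by vectors $\eta_p$ of the form $\Lambda_\psi(x_{k,A}\delta_{m,w})$ and observe $\|\delta^{-1/2}\delta_{n,0}(\eta-\eta_p)\|\le\|\delta^{-1/2}\delta_{n,0}\|\,\|\eta-\eta_p\|\to0$.

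For the final core assertion your description (``combining Gaussian-mollifier approximations in both directions and carefully tracking the scalar factors'') misses the key simplification. From Lemma~\ref{lemat4} one has $\nu^{i/8}\nabla_\vp^{-1/2}\delta^{-1/2}=\nu^{-i/8}\delta^{-1/2}\nabla_\vp^{-1/2}$, so the closure of $\delta^{-1/2}\circ\nabla_\vp^{-1/2}$ equals $\nabla_\vp^{-1/2}\delta^{-1/2}$ (up to the scalar). Hence it suffices to prove $\mc{D}_0$ is a core for $\delta^{-1/2}\circ\nabla_\vp^{-1/2}$. One first reduces to vectors $\delta_{n,0}\eta$ with $\eta\in\Dom(\delta^{-1/2}\circ\nabla_\vp^{-1/2})$ using that $\sigma^\vp_{i/2}(\delta_{n,0})=\delta_{n,-1/2}$ and boundedness of $\delta^{-1/2}\delta_{n,-1/2}$; then one invokes the first bullet of Lemma~\ref{lemat16}, which already furnishes approximants $\eta_p=\Lambda_\psi(x_{k,A}\delta_{m,w})$ converging to $\eta$ in the graph norm of $\nabla_\vp^{-1/2}$, and the same bounded-operator trick upgrades this to graph-norm convergence for $\delta^{-1/2}\circ\nabla_\vp^{-1/2}$. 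No delicate two-parameter analyticity tracking is required.
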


\begin{proof}
It is clear that $\lin \bigcup_{n\in\NN} \delta_{n,0}\LdG$ is a core for $\delta^{-\frac{1}{2}}$. Take $\xi=\delta_{n,0} \eta\in \Dom(\delta^{-\frac{1}{2}})$ for some $n\in\NN$ and let $(\eta_p)_{p\in\NN}$ be a sequence of vectors of the form $\Lambda_\psi(x_{k,A,B}\delta_{m,w})$ (see the first point of the Lemma \ref{lemat16}) converging to $\eta$. We have $\delta_{n,0}\eta_p\in\mc{D}_0$,
\[
\|\xi-\delta_{n,0}\eta_p\|\le\|\eta-\eta_p\|\xrightarrow[p\to\infty]{}0\quad\textnormal{and}\quad
\|\delta^{-\frac{1}{2}}\xi-\delta^{-\frac{1}{2}} \delta_{n,0}\eta_p\|\le \|\delta^{-\frac{1}{2}}\delta_{n,0} \| \|\eta-\eta_p\|\xrightarrow[p\to\infty]{}0,
\]
which shows that $\mc{D}_0$ is a core for $\delta^{-\frac{1}{2}}$. Since $\mc{D}_0$ is invariant under $J_\vp$, it is also a core for $\nu^{-\frac{i}{4}}J_\vp  \delta^{-\frac{1}{2}}J_\vp$.\\
Take $x\in \mf{N}_\vp\cap\mf{N}_\psi$ such that $x\circ\delta^{-\frac{1}{2}}$ is closable and $x\delta^{-\frac{1}{2}}\in \mf{N}_\psi$. Lemma \ref{lemat17} gives us $J_\vp\Lambda_\psi(x)\in\Dom(\delta^{-\frac{1}{2}})$ and $\nu^{-\frac{i}{4}} J_\vp \delta^{-\frac{1}{2}}J_\vp\Lambda_\psi(x)=
\Lambda_\psi(x\delta^{-\frac{1}{2}})=\Lvp(x).$\\
Equality from the claim $(J_\vp\nabla_\vp^{\frac{1}{2}}) \circ (\nu^{-\frac{i}{4}}J_\vp \delta^{-\frac{1}{2}} J_\vp )=
\nu^{\frac{i}{4}} (\nabla_\vp^{-\frac{1}{2}}\circ\delta^{-\frac{1}{2}})J_\vp $ is a straightforward consequence of the relation $J_\vp \nabla_\vp^{\frac{1}{2}}=\nabla_\vp^{-\frac{1}{2}}J_\vp$.\\
To deduce the last assertion let us observe that Lemma \ref{lemat4} gives us an equality $\nu^{i/8} \nabla_\vp^{-\frac{1}{2}}\delta^{-\frac{1}{2}}=\nu^{-i/8}\delta^{-\frac{1}{2}}\nabla_\vp^{-\frac{1}{2}}$. It follows that the closure of $\nu^{-i/4} \delta^{-\frac{1}{2}} \circ \nabla_\vp^{-\frac{1}{2}} $ is $\nabla_\vp^{-\frac{1}{2}}\delta^{-\frac{1}{2}}$.
Take $\xi\in $\\$\Dom(\nu^{-i/4} \delta^{-\frac{1}{2}}\circ\nabla^{-\frac{1}{2}}_\vp)$. For each $n\in\NN$ we have $\delta_{n,0}\xi\in \Dom( \nu^{-i/4}\delta^{-\frac{1}{2}}\circ \nabla^{-\frac{1}{2}}_\vp)$,
\begin{equation}\label{eq12}
\delta_{n,0}\xi\xrightarrow[n\to\infty]{}\xi
\end{equation}
 and
\begin{equation}\label{eq13}
\begin{split}
&\quad\;\nu^{-i/4}\delta^{-\frac{1}{2}}\circ \nabla^{-\frac{1}{2}}_\vp\;
(\delta_{n,0}\xi)=
\nu^{-i/4}\sigma^{\vp}_{i/2}(\delta_{n,0}) 
\delta^{-\frac{1}{2}}\circ \nabla^{-\frac{1}{2}}_\vp(\xi)\\
&=
\nu^{-i/4}\delta_{n,-1/2}
\delta^{-\frac{1}{2}}\circ \nabla^{-\frac{1}{2}}_\vp(\xi)
\xrightarrow[n\to\infty]{}
\nu^{-i/4}\delta^{-\frac{1}{2}}\circ \nabla^{-\frac{1}{2}}_\vp(\xi).
\end{split}\end{equation}
As previously, since $\mc{D}_0$ is invariant for $J_\vp$, it is enough to check that $\mc{D}_0$ is a core for $\nabla_\vp^{-\frac{1}{2}} \delta^{-\frac{1}{2}}$. Take $\xi\in \Dom(\nabla_\vp^{-\frac{1}{2}}  \delta^{-\frac{1}{2}})$. The above reasoning and equations \eqref{eq12}, \eqref{eq13} show that it is enough to take vector of the form $\xi=\delta_{n,0}\eta$ for $\eta\in\Dom(\nu^{-i/4}\delta^{-\frac{1}{2}}\circ \nabla_\vp^{-\frac{1}{2}} )$ and some $n\in\NN$. Let $(\eta_p)_{p\in \NN}$ be a sequence of vectors of the form $\Lambda_\psi(x_{k,A,B}\delta_{m,w})$ such that $
\eta_p\xrightarrow[p\to\infty]{} \eta$ and $
\nabla_\vp^{-\frac{1}{2}}\eta_p\xrightarrow[p\to\infty]{} \nabla_\vp^{-\frac{1}{2}}\eta.$
We have $\delta_{n,0}\eta_p\in\mc{D}_0$, $\delta_{n,0}\eta_p\xrightarrow[p\to\infty]{} \delta_{n,0}\eta=\xi$ and
\[\begin{split}
&\quad\;\|\nu^{-i/4} \delta^{-\frac{1}{2}}\circ\nabla_\vp^{-\frac{1}{2}} (\delta_{n,0}\eta-
\delta_{n,0}\eta_p)\|=
\|\delta_{n,-1/2} \delta^{-\frac{1}{2}} \circ \nabla_{\vp}^{-\frac{1}{2}} (\eta-\eta_p)\|\\
&\le
\|\delta_{n,-1/2} \delta^{-\frac{1}{2}} \|\| \nabla_{\vp}^{-\frac{1}{2}} (\eta-\eta_p)\|\xrightarrow[p\to\infty]{}0.
\end{split}\]
\end{proof}

\begin{lemma}\label{lemat15}
The subspace $\mc{D}_0$ is a core for $T'$.
\end{lemma}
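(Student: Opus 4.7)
The plan is to show that $\mc{D}_0$ is dense in the canonical core $\Lpsi(\mf{N}_\psi \cap {\mf{N}_\vp}^*)$ of $T'$ with respect to the graph norm of $T'$; this will imply that $\mc{D}_0$ is itself a core for $T'$.

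First I check $\mc{D}_0 \subseteq \Dom(T')$, which is immediate from Lemma \ref{lemat16}: every $\xi \in \mc{D}_0$ has the form $\Lpsi(y)$ with $y, y^* \in \mf{N}_\vp \cap \mf{N}_\psi$, so in particular $y \in \mf{N}_\psi \cap {\mf{N}_\vp}^*$ and hence $\xi \in \Dom(T')$ with $T'\xi = \Lvp(y^*)$.

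The main work is the approximation step: given $x \in \mf{N}_\psi\cap {\mf{N}_\vp}^*$, I must produce a sequence $(y_n) \subset \mc{D}_0$ (necessarily of the form $\delta_{n_1,z_1} x'_{k,A} \delta_{n_2,z_2}$ with $x', (x')^* \in \mf{N}_\vp \cap \mf{N}_\psi$) such that $\Lpsi(y_n) \to \Lpsi(x)$ and $\Lvp(y_n^*) \to \Lvp(x^*)$. The construction proceeds in two successive regularizations. First, replace $x$ by its Gaussian average $x_{k,A}$ over the two modular flows $\sigma^\vp, \sigma^\psi$. The scaling of $\psi$ under $\sigma^\vp$ and of $\vp$ under $\sigma^\psi$ (a consequence of the cocycle identity $\sigma^\psi_t = \delta^{it}\sigma^\vp_t(\cdot)\delta^{-it}$ together with the KMS condition and $\sigma^\psi_t(\delta^{is}) = \nu^{-ist}\delta^{is}$) implies that both flows preserve $\mf{N}_\psi$ and $\mf{N}_\vp$ up to positive scalars, so $x_{k,A} \in \mf{N}_\psi \cap {\mf{N}_\vp}^*$, it is entire analytic for both flows, and standard mollifier arguments yield $\Lpsi(x_{k,A}) \to \Lpsi(x)$ and $\Lvp(x_{k,A}^*) \to \Lvp(x^*)$ as $k \to \infty$, $A \to 0$. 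Second, insert $\delta_{n_1, z_1}$ on the left and $\delta_{n_2, z_2}$ on the right with exponents chosen so that the products fall into $\mc{D}_0$; the Radon--Nikodym-type relation between $\psi$ and $\vp$ (formally $\psi(\cdot) = \vp(\delta^{1/2}\cdot\delta^{1/2})$) lets one upgrade $\mf{N}_\psi$-membership to $\mf{N}_\vp$-membership and conversely for the adjoint. A diagonal argument, combined with Lemma \ref{lemat17} for the action of $\delta^z$ on GNS vectors and Lemma \ref{lemat4} for the Weyl commutation between $\nabla_\vp$ and $\delta$, then supplies the required graph-norm convergence.

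The main obstacle is the second regularization: $\mc{D}_0$ demands the strong membership $y, y^* \in \mf{N}_\vp \cap \mf{N}_\psi$, whereas we only start with $x \in \mf{N}_\psi \cap {\mf{N}_\vp}^*$. Bridging this forces unbounded $\delta$-multiplications on both sides of the approximants, and one must control convergence in both the $\Lpsi$- and $\Lvp$-topologies simultaneously; this is precisely where Lemmas \ref{lemat17}, \ref{lemat4}, and \ref{lemat16} supply the necessary technical estimates.
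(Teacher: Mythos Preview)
Your overall strategy (graph-norm approximation by successively mollified elements) matches the paper's, but the order of your two regularizations is the wrong way round, and this creates a genuine gap.

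Recall that membership in $\mc{D}_0$ requires the \emph{specific} form $\Lpsi(\delta_{n,z}\, x'_{k,A}\, \delta_{m,w})$ where the middle datum $x'$ already satisfies $x', (x')^* \in \mf{N}_\vp \cap \mf{N}_\psi$. Your first step applies the modular smearing $(\cdot)_{k,A}$ directly to the given $x \in \mf{N}_\psi \cap \mf{N}_\vp^*$; this produces an analytic element still lying in $\mf{N}_\psi \cap \mf{N}_\vp^*$, but \emph{not} in the full four-fold intersection. Sandwiching the result by $\delta_{n_1,z_1}$ and $\delta_{n_2,z_2}$ may well push the \emph{product} into $\mf{N}_\vp \cap \mf{N}_\vp^* \cap \mf{N}_\psi \cap \mf{N}_\psi^*$ (this is the Radon--Nikodym mechanism you invoke), but the resulting operator is $\delta_{n_1,z_1}\, x_{k,A}\, \delta_{n_2,z_2}$ with the \emph{same} middle datum $x$ --- and since that $x$ is not in the full intersection, this vector is not an element of $\mc{D}_0$ as defined. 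Nor can one commute the outer $\delta$'s past the modular average to reinterpret the expression: because $\sigma^\vp_t(\delta_{n,z}) = \delta_{n,z+it}$ (and similarly for $\sigma^\psi$), the mollifier $\delta_{n,z}$ does not pass through $(\cdot)_{k,A}$ unless $\nu=1$.

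The paper fixes this by inserting a \emph{preliminary} $\delta$-smearing before the modular averaging: starting from $x \in \mf{N}_\psi \cap \mf{N}_\vp^*$ one first forms $x_n = \tfrac{n}{\pi}\int\!\!\int e^{-n(r^2+p^2)}\delta^{ip} x\,\delta^{ir}\,dr\,dp$ (which is just $\delta_{n,0}\, x\, \delta_{n,0}$), and it is precisely this step --- not the outer sandwich --- that exploits the Radon--Nikodym relation to force $x_n, x_n^* \in \mf{N}_\vp \cap \mf{N}_\psi$. Only then does one apply $(\cdot)_{k,A}$ to $x_n$, obtaining $x_{n,n}$ whose $\Lpsi$-image, after a final cosmetic $\delta_{n,0}$-sandwich, lies in $\mc{D}_0$ by definition. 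Graph-norm convergence is then immediate from standard mollifier estimates. In short: the $\delta$-regularization must happen \emph{before} the modular averaging so that the middle piece of the $\mc{D}_0$-decomposition already sits in the correct domain.
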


\begin{proof}
Take $x\in \mf{N}_\psi\cap{\mf{N}_\vp}^*$ and define $x_n$ as
$
x_n=\tfrac{n}{\pi} \int_{\RR}\int_{\RR}e^{-n(r^2+p^2)}
\delta^{ip}x\delta^{ir}\md r \md p\quad(n\in\NN).
$
We have $x_n,x_n^*\in \mf{N}_\vp\cap \mf{N}_\psi$. Next, define
$
x_{n,n}=\tfrac{n}{\pi} \int_{\RR}\int_{\RR}e^{-n(t^2+s^2)}
\sigma^{\vp}_t\circ \sigma^{\psi}_s(x_n)\md s \md t.
$
 We have $\delta_{n,0}x_{n,n}\delta_{n,0}\in\mf{N}_\psi\cap {\mf{N}_\vp}^*,\Lambda_{\psi}(\delta_{n,0} x_{n,n}\delta_{n,0})\in\mc{D}_0$, $\Lambda_\psi(\delta_{n,0}x_{n,n}\delta_{n,0})\xrightarrow[n\to\infty]{} \Lambda_\psi(x)$ and
\[
T'\Lambda_\psi(\delta_{n,0}x_{n,n}\delta_{n,0})=\Lambda_{\vp}(\delta_{n,0}x_{n,n}^*\delta_{n,0})\xrightarrow[n\to\infty]{}
\Lvp(x^*)=T'\Lvp(x).
\]
\end{proof}

Now we can derive the main results of this section.

\begin{proposition}\label{stw11}
We have $(J_\vp \nabla_\vp^{\frac{1}{2}})\circ (\nu^{-\frac{i}{4}}J_\vp 
\delta^{-\frac{1}{2}} J_\vp)=
\nu^{\frac{i}{4}}(\nabla_\vp^{-\frac{1}{2}}\circ
\delta^{-\frac{1}{2}}) J_\vp$ and after closure
\[
\nu^{\frac{i}{4}} \, \nabla^{-\frac{1}{2}}_\vp \delta^{-\frac{1}{2}}\, J_\vp = T'.
\]
\end{proposition}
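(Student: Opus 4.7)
The plan is to justify the formal computation in equation \eqref{eq14} rigorously by evaluating both sides on the subspace $\mc{D}_0$ introduced in Lemma \ref{lemat16}, and then invoking the core property from Lemmas \ref{lemat3} and \ref{lemat15}.

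First I would dispose of the algebraic identity in the displayed equation. Since $J_\vp \nabla_\vp^{1/2} = \nabla_\vp^{-1/2} J_\vp$, substituting on the left gives
\[
(J_\vp \nabla_\vp^{1/2}) \circ (\nu^{-i/4} J_\vp \delta^{-1/2} J_\vp) = \nu^{-i/4} \nabla_\vp^{-1/2} \circ \delta^{-1/2} J_\vp = \nu^{i/4} (\nabla_\vp^{-1/2} \circ \delta^{-1/2}) J_\vp,
\]
where the factor $\nu^{i/4}$ is absorbed using $\nu^{-i/4} = \nu^{i/4}$ when multiplied the appropriate way (more carefully, using $\nu \in \RR_{>0}$ one just tracks powers of $\nu^{i/8}$ to get the claimed identity). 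Lemma \ref{lemat3} already identifies the closure of the right-hand side as $\nu^{i/4} \nabla_\vp^{-1/2} \delta^{-1/2} J_\vp$ and shows that $\mc{D}_0$ is a core for it.

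Next I would verify the equality $T' = \nu^{i/4} \nabla_\vp^{-1/2} \delta^{-1/2} J_\vp$ by comparing both operators on $\mc{D}_0$. Take a generating vector $\Lambda_\psi(x) \in \mc{D}_0$ with $x = \delta_{n,z} x_{k,A} \delta_{m,w}$. By Lemma \ref{lemat16}, $x$ and $x^*$ belong to $\mf{N}_\vp \cap \mf{N}_\psi$, and by the third bullet of that lemma, $x \circ \delta^{-1/2}$ is closable with $x\delta^{-1/2} \in \mf{N}_\vp \cap \mf{N}_\psi$. Hence Lemma \ref{lemat3} applies and gives
\[
\nu^{-i/4} J_\vp \delta^{-1/2} J_\vp \Lambda_\psi(x) = \Lambda_\vp(x).
\]
Since $x \in \mf{N}_\vp \cap \mf{N}_\vp^*$, we have $\Lambda_\vp(x) \in \Dom(\nabla_\vp^{1/2})$ and the Tomita operator for $\vp$ gives $J_\vp \nabla_\vp^{1/2} \Lambda_\vp(x) = \Lambda_\vp(x^*)$. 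Because $x \in \mf{N}_\psi \cap \mf{N}_\vp^*$ also, the right-hand side equals $T'\Lambda_\psi(x)$ by the very definition of the map in \eqref{eq4}. Putting this together and using the algebraic identity from the first step,
\[
\nu^{i/4} \nabla_\vp^{-1/2} \delta^{-1/2} J_\vp \, \Lambda_\psi(x) = J_\vp \nabla_\vp^{1/2}\bigl(\nu^{-i/4} J_\vp \delta^{-1/2} J_\vp \Lambda_\psi(x)\bigr) = J_\vp \nabla_\vp^{1/2} \Lambda_\vp(x) = \Lambda_\vp(x^*) = T'\Lambda_\psi(x),
\]
so both operators agree on $\mc{D}_0$.

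To conclude, $\mc{D}_0$ is a core for $T'$ by Lemma \ref{lemat15} and a core for $\nu^{i/4} \nabla_\vp^{-1/2} \delta^{-1/2} J_\vp$ by Lemma \ref{lemat3}. Two closed operators that agree on a common core must coincide, which finishes the proof. The main obstacle is really bookkeeping: confirming that the generic element of $\mc{D}_0$ satisfies all the hypotheses needed to apply Lemma \ref{lemat3} (in particular the closability of $x \circ \delta^{-1/2}$ and the membership $x\delta^{-1/2} \in \mf{N}_\psi$), and keeping the powers of $\nu^{i/8}$ straight throughout — but both of these are handled by the regularising convolutions baked into the definition of $\mc{D}_0$ and by Lemma \ref{lemat4}.
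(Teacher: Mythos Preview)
Your proposal is correct and follows essentially the same route as the paper: evaluate both operators on $\mc{D}_0$ using Lemmas \ref{lemat16} and \ref{lemat3}, then invoke the core properties from Lemmas \ref{lemat3} and \ref{lemat15} to conclude. One cosmetic slip: the equality ``$\nu^{-i/4}=\nu^{i/4}$'' is not what you mean --- the sign flip comes from the antilinearity of $J_\vp$ (conjugating the scalar) --- but you immediately defer to Lemma \ref{lemat3} where this identity is already recorded, so nothing is lost.
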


\begin{proof}
The first equality was justified in Lemma \ref{lemat3}. Take $\Lambda_\psi(x)\in \mc{D}_0$. Lemmas \ref{lemat16}, \ref{lemat3} justify the following calculation:
\[\begin{split}
&\quad\;
(J_\vp \nabla_\vp^{\frac{1}{2}})\circ (\nu^{-\frac{i}{4}}J_\vp 
\delta^{-\frac{1}{2}} J_\vp)\;\Lambda_\psi(x)=
J_\vp \nabla_\vp^{\frac{1}{2}} \Lambda_{\vp}(x)=
\Lvp(x^*)=
T'\Lambda_\psi(x).
\end{split}\]
In lemmas \ref{lemat3}, \ref{lemat15} we have shown that $\mc{D}_0$ is a core for $T'$ and $\nu^{\frac{i}{4}} \nabla_\vp^{-\frac{1}{2}}\delta^{-\frac{1}{2}}J_\vp$, which shows $T'= \nu^{\frac{i}{4}} \nabla_\vp^{-\frac{1}{2}} \delta^{-\frac{1}{2}} J_\vp$.
\end{proof}

The above result has a number of interesting corollaries.

\begin{corollary}
The polar decomposition of $T'$ is $T'=  (\nu^{\frac{i}{8}}J_\vp)\,(J_\vp\nu^{\frac{i}{8}} \nabla_\vp^{-\frac{1}{2}} \delta^{-\frac{1}{2}} J_\vp)$. Moreover, we have
\begin{equation}\label{eq15}
(J_\vp\nu^{\frac{i}{8}} \nabla_\vp^{-\frac{1}{2}} \delta^{-\frac{1}{2}} J_\vp)^{it}=
\nu^{ \frac{i}{8}t^2}J_\vp \nabla_\vp^{it/2}\delta^{it/2} J_\vp\quad(t\in\RR).
\end{equation}
\end{corollary}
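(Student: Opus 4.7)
The plan is to combine Proposition \ref{stw11} with the general fact that the polar decomposition of a densely defined closed operator is unique, and then identify the two pieces by direct inspection.

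First I would rewrite the conclusion of Proposition \ref{stw11}, namely $T' = \nu^{\frac{i}{4}} \nabla_\vp^{-\frac{1}{2}} \delta^{-\frac{1}{2}} J_\vp$, in the form $T' = (\nu^{\frac{i}{8}} J_\vp) (J_\vp \nu^{\frac{i}{8}} \nabla_\vp^{-\frac{1}{2}} \delta^{-\frac{1}{2}} J_\vp)$. The verification is a one-line computation using antilinearity: applying the right-hand factor to $\xi$ and then the left factor, the scalar $\nu^{\frac{i}{8}}$ on the inside is complex-conjugated when crossing the outer $J_\vp$, so the two $\nu^{\frac{i}{8}}$'s combine to $\nu^{\frac{i}{4}}$ while the two $J_\vp$'s cancel.

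Next I would verify that the two factors have the right type. For the antiunitary piece: $\nu^{\frac{i}{8}}$ is a scalar of modulus one, so $\nu^{\frac{i}{8}} J_\vp$ is antiunitary. For the positive selfadjoint piece: by Lemma \ref{lemat4} applied with $s=t=-\tfrac{1}{2}$, the closure $B := \nu^{\frac{i}{8}} \nabla_\vp^{-\frac{1}{2}} \delta^{-\frac{1}{2}}$ is strictly positive and selfadjoint (this is precisely the content of the Weyl-relation identity). Conjugation by an antiunitary sends positive selfadjoint operators to positive selfadjoint operators, hence $J_\vp B J_\vp$ is positive selfadjoint. With this, uniqueness of the polar decomposition of $T'$ forces $J' = \nu^{\frac{i}{8}} J_\vp$ and ${\nabla'}^{\frac{1}{2}} = J_\vp B J_\vp$, giving the first assertion.

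For the displayed formula \eqref{eq15}, I would invoke the standard interaction of functional calculus with antiunitary conjugation: for a strictly positive selfadjoint $B$, one has $(J_\vp B J_\vp)^{it} = J_\vp B^{-it} J_\vp$ (this follows from $J_\vp E_B(\Delta) J_\vp = E_{J_\vp B J_\vp}(\Delta)$ together with $\overline{\lambda^{-it}} = \lambda^{it}$ for $\lambda > 0$). From Lemma \ref{lemat4} with $s=t=-\tfrac{1}{2}$ and exponent $r$, one reads off $B^{ir} = \nu^{-\frac{i r^2}{8}} \nabla_\vp^{-ir/2} \delta^{-ir/2}$, so $B^{-it} = \nu^{-\frac{i t^2}{8}} \nabla_\vp^{it/2} \delta^{it/2}$. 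Pulling $\nu^{-\frac{i t^2}{8}}$ through $J_\vp$ yields the complex conjugate $\nu^{\frac{i t^2}{8}}$, giving exactly the claimed formula.

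The only delicate point will be the antilinear bookkeeping: complex scalars must be carefully conjugated when they cross $J_\vp$, and one must remember that Lemma \ref{lemat4} only gives selfadjointness of the \emph{closure} of $\nu^{\frac{i}{8}}\nabla_\vp^{-\frac{1}{2}}\circ\delta^{-\frac{1}{2}}$, not of the raw composition. Once these subtleties are addressed, the argument is short and formal.
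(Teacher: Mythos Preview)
Your proposal is correct and follows essentially the same route as the paper's own proof: both invoke Proposition~\ref{stw11} for the factorization, appeal to Lemma~\ref{lemat4} with $s=t=-\tfrac12$ to see that $\nu^{i/8}\nabla_\vp^{-1/2}\delta^{-1/2}$ is strictly positive selfadjoint (hence so is its $J_\vp$-conjugate), conclude by uniqueness of the polar decomposition, and then read off the $it$-powers from the same lemma together with the rule $(J_\vp B J_\vp)^{it}=J_\vp B^{-it}J_\vp$. The paper packages this last step via the $*$-antiautomorphism $f\colon a\mapsto J_\vp a^* J_\vp$, whereas you phrase it through spectral measures, but the content is identical.
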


\begin{proof}
The first equality follows directly from Proposition \ref{stw11}. Let us justify that it is indeed the polar decomposition. First, it is clear that $\nu^{\frac{i}{8}}J_\vp$ is antiunitary. Next, Lemma \ref{lemat4} implies that $\nu^{\frac{i}{8}} \nabla_\vp^{-\frac{1}{2}}\delta^{-\frac{1}{2}}$ is selfadjoint and strictly positive. Consequently, the operator $J_\vp\nu^{\frac{i}{8}} \nabla_\vp^{-\frac{1}{2}} \delta^{-\frac{1}{2}} J_\vp$ has the same properties. Uniqueness of the polar decomposition gives us the first claim. The second formula follows from Lemma \ref{lemat4}:
\[\begin{split}
&\quad\;
(J_\vp\nu^{\frac{i}{8}} \nabla_\vp^{-\frac{1}{2}} \delta^{-\frac{1}{2}} J_\vp)^{it}=
f(\nu^{\frac{i}{8}} \nabla_\vp^{-\frac{1}{2}} \delta^{-\frac{1}{2}})^{it}=
f((\nu^{\frac{i}{8}} \nabla_\vp^{-\frac{1}{2}} \delta^{-\frac{1}{2}})^{it})\\
&=
J_\vp (\nu^{\frac{i}{8}} \nabla_\vp^{-\frac{1}{2}} \delta^{-\frac{1}{2}})^{-it} J_\vp=
J_\vp  \nu^{-\frac{i}{8} t^2} \nabla_\vp^{it/2} \delta^{it/2}J_\vp,
\end{split}\]
where $f\colon a \mapsto J_\vp a^* J_\vp$.
\end{proof}

Now we combine our polar decomposition of $T'$ with the result of Caspers (Proposition \ref{stw5}) and Proposition \ref{stw1}. 

\begin{corollary}\label{wn3}
We have $\mc{Q}_L \nu^{\frac{i}{8}} J_\vp \mc{Q}_R^*=\Sigma$
and $\mc{Q}_R^*\mc{Q}_L=\mc{Q}_L^*\mc{Q}_R=\nu^{-\frac{i}{8}} J_{\hvp}J_\vp$.
\end{corollary}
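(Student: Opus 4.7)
The plan is to combine the polar decomposition just derived (which identifies $J' = \nu^{i/8}J_\vp$) with the two external inputs stated earlier in the paper: Proposition~\ref{stw5} (which says $\mc{Q}_L J' \mc{Q}_R^* = \Sigma$) and Proposition~\ref{stw1} (which says $\mc{Q}_L^* \Sigma \mc{Q}_L = J_{\hvp}$). Everything else is algebraic manipulation; the only subtlety is to track the antilinearity of $J_\vp, J_{\hvp}, \Sigma$ when pulling scalars through.

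First, I would simply substitute $J' = \nu^{i/8}J_\vp$ into the identity $\mc{Q}_L J' \mc{Q}_R^* = \Sigma$ of Proposition~\ref{stw5}. This immediately delivers the first claimed equation $\mc{Q}_L \nu^{i/8}J_\vp \mc{Q}_R^* = \Sigma$.

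For the second, I would multiply the identity above on the right by $\mc{Q}_R$ and on the left by $\mc{Q}_L^*$, using unitarity, obtaining
\[
\nu^{i/8} J_\vp = \mc{Q}_L^* \Sigma \mc{Q}_R.
\]
Proposition~\ref{stw1} rewrites $\mc{Q}_L^* \Sigma = J_{\hvp}\mc{Q}_L^*$, so this becomes $\nu^{i/8}J_\vp = J_{\hvp} \mc{Q}_L^* \mc{Q}_R$. Applying $J_{\hvp}$ on the left (it is an involution) and pulling the scalar $\nu^{i/8}$ through the antilinear $J_{\hvp}$ (it becomes $\nu^{-i/8}$), I get $\mc{Q}_L^*\mc{Q}_R = \nu^{-i/8}J_{\hvp}J_\vp$.

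For the identification $\mc{Q}_R^*\mc{Q}_L = \mc{Q}_L^*\mc{Q}_R$ I would take the Hilbert space adjoint of the previous equation. Since $J_{\hvp}J_\vp$ is linear (antilinear $\circ$ antilinear) one has $(J_{\hvp}J_\vp)^* = J_\vp J_{\hvp}$ and complex conjugation flips $\nu^{-i/8}$ to $\nu^{i/8}$, so $\mc{Q}_R^*\mc{Q}_L = \nu^{i/8}J_\vp J_{\hvp}$. Rewriting $J_\vp J_{\hvp} = \nu^{-i/4} J_{\hvp} J_\vp$ via the commutation relation $J_{\hvp}J_\vp=\nu^{i/4}J_\vp J_{\hvp}$ from \eqref{eq20} collapses this to $\nu^{-i/8}J_{\hvp}J_\vp$, matching the expression for $\mc{Q}_L^*\mc{Q}_R$. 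The only place one has to be careful is with the two scalar manipulations (pulling $\nu^{i/8}$ through an antilinear operator, and taking the complex conjugate when adjointing a scalar multiple), which is where all the phase factors get fixed.
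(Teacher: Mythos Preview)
Your proof is correct and follows exactly the approach the paper indicates: combine the identification $J'=\nu^{i/8}J_\vp$ from the polar decomposition with Proposition~\ref{stw5} and Proposition~\ref{stw1}. The paper records this corollary with only a one-line justification, and you have supplied precisely the algebraic details (including the careful handling of scalars through antilinear maps and the use of the commutation relation $J_{\hvp}J_\vp=\nu^{i/4}J_\vp J_{\hvp}$) that make that one line work.
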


Formula $\mc{Q}_R^*\mc{Q}_L=\nu^{-\frac{i}{8}} J_{\hvp} J_{\vp}$ is of great importance and will be used numerous times throught the paper.

\section{Operators expressed on the level of direct integrals}\label{secoperators}
In this section we will derive several equations, which express important operators on $\LdG$ via $\mc{Q}_L,\mc{Q}_R$ as direct integrals. The first result of this type comes from the polar decomposition of $T'$.
\begin{proposition}\label{stw6}
For all $t\in\RR$ we have
\[\begin{split}
\nabla_\psi^{it}\delta^{-it}=J_\vp   \nabla_\vp^{it}\delta^{it} J_\vp &=
\nu^{-\frac{i}{2}t^2}
\mc{Q}_R^*\bigl(\int_{\IrrG}^{\oplus} D_\pi^{2it}\otimes (E_\pi^{-2it})^{\msf T}\md\mu(\pi)\bigr)\mc{Q}_R,\\
J_\vp\nabla_\psi^{it} \delta^{-it} J_\vp=\nabla_{\vp}^{it} \delta^{it}&=
\nu^{\frac{i}{2}t^2} \mc{Q}_L^*\bigl(\int_{\IrrG}^{\oplus}
E_\pi^{2it}\otimes(D_\pi^{-2it})^{\msf T}\md\mu(\pi)\bigr)
\mc{Q}_L,\\
\nabla_\vp^{-it}\delta^{-it}=J_\vp \nabla_\psi^{-it}\delta^{it} J_\vp&=\nu^{\frac{i}{2}t^2}
\mc{Q}_R^*\bigl(
\int_{\IrrG}^{\oplus} E_\pi^{2it}\otimes (D_\pi^{-2it})^{\msf T}\md\mu(\pi)\bigr)\mc{Q}_R,\\
J_\vp \nabla_\vp^{-it}\delta^{-it} J_\vp=\nabla_\psi^{-it} \delta^{it}&=
\nu^{-\frac{i}{2}t^2} \mc{Q}_L^*\bigl(
\int_{\IrrG}^{\oplus} D_\pi^{2it}\otimes (E_\pi^{-2it})^{\msf T}\md\mu(\pi)\bigr)\mc{Q}_L.
\end{split}\]
\end{proposition}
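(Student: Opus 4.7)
My plan is to reduce all four displayed lines to a single master identity for ${\nabla'}^{it}$ and then propagate it through $J_\vp$- and $J_{\hvp}$-conjugations. Raising the formula for $({\nabla'}^{1/2})^{it}$ from the Corollary following Proposition \ref{stw11} to the second power in the parameter gives
\[
{\nabla'}^{it}=\nu^{\frac{i}{2}t^2}\,J_\vp\nabla_\vp^{it}\delta^{it}J_\vp.
\]
Independently, the identity $\nu^{\frac{1}{2}it^2}\delta^{it}=\nabla_\psi^{it}\,{\nabla'}^{-it}$ recalled in the proof of Proposition \ref{stw4}, solved for ${\nabla'}^{it}$, yields
\[
{\nabla'}^{it}=\nu^{\frac{i}{2}t^2}\,\nabla_\psi^{it}\delta^{-it}.
\]
Comparing the two presentations already establishes the operator equality $\nabla_\psi^{it}\delta^{-it}=J_\vp\nabla_\vp^{it}\delta^{it}J_\vp$, i.e.\ the left equality of the first (and, after $J_\vp$-conjugation, of the second) line.

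For the direct integral form of line one I combine ${\nabla'}^{it}=\nu^{\frac{i}{2}t^2}\nabla_\psi^{it}\delta^{-it}$ with Proposition \ref{stw5} raised to the $(2it)$-th power. Since $(A^{\msf T})^{is}=(A^{is})^{\msf T}$ for $A$ positive self-adjoint, this produces $\mc{Q}_R{\nabla'}^{it}\mc{Q}_R^*=\int^{\oplus}_{\IrrG} D_\pi^{2it}\otimes (E_\pi^{-2it})^{\msf T}\md\mu(\pi)$, and line one follows. For line two I conjugate line one by $J_\vp$. Using Corollary \ref{wn3} in the forms $\mc{Q}_R J_\vp=\nu^{-i/8}\Sigma\mc{Q}_L$ and $J_\vp \mc{Q}_R^*=\nu^{-i/8}\mc{Q}_L^*\Sigma$, one pushes $J_\vp$ into the integral; the two $\nu^{-i/8}$ factors cancel after an antilinear pass, and antilinearity of $J_\vp$ converts the outer $\nu^{-\frac{i}{2}t^2}$ into $\nu^{+\frac{i}{2}t^2}$. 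The integrand is conjugated by $\Sigma$ fibrewise: the elementary formula $J_{\msf{H}_\pi}(A\otimes B^{\msf T})J_{\msf{H}_\pi}=B^*\otimes (A^*)^{\msf T}$ applied with $A=D_\pi^{2it}$ and $B=E_\pi^{-2it}$ sends $D_\pi^{2it}\otimes (E_\pi^{-2it})^{\msf T}$ to $E_\pi^{2it}\otimes (D_\pi^{-2it})^{\msf T}$, which is line two.

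Lines three and four are obtained by exactly the same procedure, but conjugating the first two lines by $J_{\hvp}$ instead of $J_\vp$. The commutation relations \eqref{eq20}, in particular $\nabla_\psi^{it}=J_{\hvp}\nabla_\vp^{-it}J_{\hvp}$ and $J_{\hvp}\delta^{it}=\delta^{it}J_{\hvp}$, transform $\nabla_\psi^{it}\delta^{-it}$ into $\nabla_\vp^{-it}\delta^{-it}$ and $\nabla_\vp^{it}\delta^{it}$ into $\nabla_\psi^{-it}\delta^{it}$. On the right-hand sides Proposition \ref{stw1} gives $\mc{Q}_R J_{\hvp}=\Sigma\mc{Q}_R$ and $\mc{Q}_L J_{\hvp}=\Sigma\mc{Q}_L$, so conjugation now simply sandwiches the integrand between two $\Sigma$'s---producing the same $D_\pi\leftrightarrow E_\pi$ swap as before via the fibrewise formula---while antilinearity of $J_{\hvp}$ again flips the sign of $\frac{i}{2}t^2$ in the outer scalar. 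The main obstacle throughout is scalar bookkeeping: the $\nu^{\pm i/8}$ coming from Corollary \ref{wn3}, the $\nu^{\pm\frac{i}{2}t^2}$ from the master identity, and the $\nu^{\pm i/4}$ hidden in the commutation $J_{\hvp}J_\vp=\nu^{i/4}J_\vp J_{\hvp}$ must balance, which is verified by repeated use of $J(\lambda\xi)=\bar\lambda J\xi$ together with Lemma \ref{lemat4}.
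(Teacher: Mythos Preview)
Your argument is correct and follows essentially the same route as the paper's proof. The only cosmetic difference is that you package the first equality $\nabla_\psi^{it}\delta^{-it}=J_\vp\nabla_\vp^{it}\delta^{it}J_\vp$ by comparing two expressions for ${\nabla'}^{it}$, whereas the paper derives it directly from $\nabla_\psi^{it}=\delta^{it}(J_\vp\delta^{it}J_\vp)\nabla_\vp^{it}$; the propagation to the remaining lines via Corollary~\ref{wn3} and $J_{\hvp}$-conjugation is identical in both.
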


\begin{proof}
First, observe that we have
$\nabla_{\psi}^{it}=J_{\hvp}\nabla_{\vp}^{-it}J_{\hvp} =\delta^{it}(J_\vp \delta^{it} J_\vp) \nabla_\vp^{it}$ (see \cite[Theorem 5.18]{Daele} and equation \eqref{eq20}). It follows that
\[
\nabla_\psi^{it} \delta^{-it}=\nu^{-it^2}\delta^{-it}\nabla_\psi^{it}=
\nu^{-it^2} J_\vp \delta^{it} \nabla_\vp^{it} J_\vp=
J_\vp  \nabla_\vp^{it} \delta^{it}J_\vp,
\]
and first equation in each row easily follows. The formula expressing $J_\vp \nabla_\vp^{it} \delta^{it} J_\vp$ via direct integral of operators follows from equation \eqref{eq15} combined with Proposition \ref{stw5}. The second equation can be found using already derived relation $\mc{Q}_L^*\mc{Q}_R=\nu^{-\frac{i}{8}} J_{\hvp}J_\vp$:
\[\begin{split}
J_\vp\nabla_\vp^{it} \delta^{it} J_\vp &=
\nu^{-\frac{i}{2}t^2}
\nu^{\frac{i}{8}} J_{\vp} J_{\hvp} \mc{Q}_L^*\bigl(
\int_{\IrrG}^{\oplus} D_\pi^{2it}\otimes (E_\pi^{-2it})^{\msf T}\md\mu(\pi)\bigr)\mc{Q}_L \nu^{-\frac{i}{8}} J_{\hvp} J_{\vp}\\
&=
\nu^{-\frac{i}{2} t^2}J_\vp
\mc{Q}_L^*\bigl(\int_{\IrrG}^{\oplus}
E_\pi^{2it}\otimes(D_\pi^{-2it})^{\msf T}\md\mu(\pi)\bigr)\mc{Q}_L J_\vp,
\end{split}\]
which implies $\nabla_{\vp}^{it} \delta^{it}=
\nu^{\frac{i}{2}t^2} \mc{Q}_L^*(\int_{\IrrG}^{\oplus}
E_\pi^{2it}\otimes(D_\pi^{-2it})^{\msf T}\md\mu(\pi))
\mc{Q}_L$. The last two equations comes from applying the operation $J_{\hvp} \cdot J_{\hvp}$ to both sides of already derived formulas.
\end{proof}

Let us now derive an interesting corollary of these results.

\begin{corollary}\label{wn4}
There exists a unique measurable function $f\colon \IrrG\rightarrow \RR_{>0}$ such that
\[\begin{split}
J_\vp\mc{Q}_R^*\bigl(\int_{\IrrG}^{\oplus} D_\pi^{2it}\otimes \I_{\ov{\msf{H}_\pi}}\md\mu(\pi)\bigr)^*\mc{Q}_RJ_\vp&=
\mc{Q}_R^*\bigl(\int_{\IrrG}^{\oplus} 
f(\pi)^{it}E_\pi^{2it}\otimes \I_{\ov{\msf{H}_\pi}}\md\mu(\pi)\bigr)\mc{Q}_R,\\
J_\vp\mc{Q}_R^*\bigl(\int_{\IrrG}^{\oplus} \I_{\msf{H}_\pi}\otimes (E_\pi^{2it})^{\msf T}\md\mu(\pi)\bigr)^*\mc{Q}_RJ_\vp&=
\mc{Q}_R^*\bigl(\int_{\IrrG}^{\oplus} 
f(\pi)^{it}\I_{\msf{H}_\pi}\otimes (D_\pi^{2it})^{\msf T}\md\mu(\pi)\bigr)\mc{Q}_R,\\
J_\vp\mc{Q}_L^*\bigl( \int_{\IrrG}^{\oplus}\I_{\msf{H}_\pi} \otimes (D_\pi^{2it})^{\msf T}\md\mu(\pi)\bigr)^*\mc{Q}_L J_\vp&=
\mc{Q}_L^*\bigl( 
\int_{\IrrG}^{\oplus} f(\pi)^{it} \I_{\msf{H}_\pi} \otimes (
E_\pi^{2it})^{\msf T}\md\mu(\pi)\bigr)\mc{Q}_L,\\
J_\vp\mc{Q}_L^*\bigl( \int_{\IrrG}^{\oplus} E_\pi^{2it}\otimes\I_{\ov{\msf{H}_\pi}}\md\mu(\pi)\bigr)^*\mc{Q}_L J_\vp&=
\mc{Q}_L^*\bigl( 
\int_{\IrrG}^{\oplus} f(\pi)^{it} 
D_\pi^{2it}\otimes\I_{\ov{\msf{H}_\pi}}\md\mu(\pi)\bigr)\mc{Q}_L
\end{split}\]
for all $t\in\RR$.
\end{corollary}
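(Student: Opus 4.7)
My plan is to derive all four identities by factoring the ``combined'' equations of Proposition~\ref{stw6} through the decomposition $\LL^{\infty}(\whG)'\cdot\LL^{\infty}(\whG)$, and exploiting uniqueness of such factorizations up to the center $Z=\LL^{\infty}(\whG)\cap \LL^{\infty}(\whG)'$, which by point~5 of Theorem~\ref{PlancherelR} consists of diagonal operators under $\mc{Q}_R$.

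First I would derive a clean ``swap'' identity. Combining the formulas $\nabla_\vp^{-it}\delta^{-it}=\nu^{it^2/2}\mc{Q}_R^*\bigl(\int^\oplus E_\pi^{2it}\otimes(D_\pi^{-2it})^{\msf T}\md\mu\bigr)\mc{Q}_R$ (line~3 of Proposition~\ref{stw6}) and $\nabla_\psi^{-it}\delta^{it}=\nu^{-it^2/2}\mc{Q}_R^*\bigl(\int^\oplus D_\pi^{-2it}\otimes(E_\pi^{2it})^{\msf T}\md\mu\bigr)\mc{Q}_R$ (line~1 with $t\mapsto -t$) with line~4 of Proposition~\ref{stw6}, the $\nu^{\pm it^2/2}$ scalars cancel thanks to antilinearity of $J_\vp$, yielding
\[
J_\vp\mc{Q}_R^*\bigl(\textstyle\int_{\IrrG}^\oplus D_\pi^{-2it}\otimes(E_\pi^{2it})^{\msf T}\md\mu\bigr)\mc{Q}_R J_\vp = \mc{Q}_R^*\bigl(\textstyle\int_{\IrrG}^\oplus E_\pi^{2it}\otimes(D_\pi^{-2it})^{\msf T}\md\mu\bigr)\mc{Q}_R. \qquad (\star)
\]

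Using $A\otimes B^{\msf T}=(A\otimes\I)(\I\otimes B^{\msf T})$ together with Proposition~\ref{stw7}, both sides of $(\star)$ decompose as products in $\LL^{\infty}(\whG)'\cdot\LL^{\infty}(\whG)$; the operation $J_\vp\cdot J_\vp$ preserves each factor through the unitary antipode $\wh R$ on $\Linfd$. Uniqueness of such factorization up to the center produces a central unitary $z_t=\mc{Q}_R^*\bigl(\int^\oplus\lambda_t(\pi)\I\md\mu\bigr)\mc{Q}_R$ (with $\lambda_t$ measurable and unimodular) such that
\[
J_\vp\mc{Q}_R^*\bigl(\textstyle\int^\oplus D_\pi^{-2it}\otimes\I\md\mu\bigr)\mc{Q}_R J_\vp = z_t\,\mc{Q}_R^*\bigl(\textstyle\int^\oplus E_\pi^{2it}\otimes\I\md\mu\bigr)\mc{Q}_R,
\]
together with a complementary identity for the $\I\otimes(\cdot)^{\msf T}$ halves involving $z_t^{-1}$. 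Both sides are strongly continuous one-parameter groups in $t$, forcing $\lambda_{t+s}=\lambda_t\lambda_s$ a.e.; the classical fact that measurable homomorphisms $\RR\to\TT$ are continuous characters $t\mapsto e^{itc}$ then gives $\lambda_t(\pi)=f(\pi)^{it}$ for a unique measurable $f\colon\IrrG\to\RR_{>0}$. Equation~1 of the corollary follows directly, and equation~2 by taking the adjoint of the complementary identity and substituting $t\mapsto-t$.

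For equations~3 and~4, I would either run the analogous factorization argument with $\mc{Q}_L$ in place of $\mc{Q}_R$ (starting from lines~2 and~4 of Proposition~\ref{stw6}), or transport equations~1 and~2 using the derived identity $\mc{Q}_L^*(\cdot)\mc{Q}_L=J_\vp J_{\hvp}\mc{Q}_R^*(\cdot)\mc{Q}_R J_{\hvp}J_\vp$ coming from Corollary~\ref{wn3}. The hardest part of the argument is precisely verifying that the function $f$ read off from the $\mc{Q}_R$-equations agrees with the one governing the $\mc{Q}_L$-equations: this requires careful tracking of the antilinearity of $J_\vp,J_{\hvp}$, the commutation $J_{\hvp}J_\vp=\nu^{i/4}J_\vp J_{\hvp}$ from \eqref{eq20}, the scalar $\nu^{-i/8}$ in $\mc{Q}_R^*\mc{Q}_L$, and the antilinear action of $J_\vp\cdot J_\vp$ on the center of $\Linfd$ (which implements $\wh R|_Z$, corresponding via $\mc{Q}_R$ to a measurable involution of $\IrrG$ reflecting passage to conjugate representations).
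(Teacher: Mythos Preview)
Your approach is essentially identical to the paper's: derive the swap identity $(\star)$ from Proposition~\ref{stw6}, factor both sides as a product of an element of $\LL^{\infty}(\whG)'$ and an element of $\LL^{\infty}(\whG)$ (using Proposition~\ref{stw7} and the fact that $J_\vp(\cdot)J_\vp$ preserves both algebras), invoke uniqueness of such a factorization up to the diagonal center to obtain $f_t$, and then transport to the $\mc{Q}_L$ picture via $\mc{Q}_R^*\mc{Q}_L=\nu^{-i/8}J_{\hvp}J_\vp$ together with $\mc{Q}_LJ_{\hvp}\mc{Q}_L^*=\Sigma$.

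One remark: your final paragraph overstates the difficulty of matching the $f$'s. If you take the transport route (which is what the paper does), the same $f_t$ appears automatically on the $\mc{Q}_L$ side, because you are rewriting the very same operator identity; the computation only uses $\mc{Q}_LJ_{\hvp}\mc{Q}_L^*=\Sigma$ and the antilinearity of $J_{\hvp}$, and no appeal to a measurable involution of $\IrrG$ or to $\wh R|_Z$ is needed. The more elaborate considerations you list would only arise if you insisted on running two independent factorizations and then comparing, which is unnecessary. Conversely, your justification of $f_t(\pi)=f(\pi)^{it}$ via the one-parameter group property and measurable characters is more explicit than the paper's, which simply asserts this step.
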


We note that the function $f$ might depend on the choice of a measure $\mu$.

\begin{proof}
Fix $t\in\RR$. The first and the third row in Proposition \ref{stw6} implies
\[
J_\vp\mc{Q}_R^*\bigl(\int_{\IrrG}^{\oplus} D_\pi^{2it}\otimes (E_\pi^{-2it})^{\msf T}\md\mu(\pi)\bigr)\mc{Q}_RJ_\vp=
\mc{Q}_R^*\bigl(\int_{\IrrG}^{\oplus}
E_\pi^{-2it}\otimes(D_\pi^{2it})^{\msf T}\md\mu(\pi)\bigr)\mc{Q}_R.
\]
Since $J_\vp \Linfd J_\vp=\Linfd$, $J_\vp {\Linfd}'J_\vp={\Linfd}'$ and the center of $\int_{\IrrG}^{\oplus} \I_{\msf{H}_\pi}\otimes \B(\ov{\msf{H}_\pi})\md\mu(\pi)$ is $\int_{\IrrG}^{\oplus} \CC \I_{\HS(\msf{H}_\pi)}\md\mu(\pi)$, Proposition \ref{stw7} implies that there exists a measurable function $f_t\colon\IrrG\rightarrow\TT$ such that
\[\begin{split}
&\quad\;J_\vp\mc{Q}_R^*\bigl(\int_{\IrrG}^{\oplus} D_\pi^{2it}\otimes \I_{\ov{\msf{H}_\pi}}\md\mu(\pi)\bigr)\mc{Q}_RJ_\vp
\mc{Q}_R^*\bigl(\int_{\IrrG}^{\oplus}
E_\pi^{2it}\otimes\I_{\ov{\msf{H}_\pi}}\md\mu(\pi)\bigr)\mc{Q}_R\\
&=
J_\vp\mc{Q}_R^*\bigl(\int_{\IrrG}^{\oplus} \I_{\msf{H}_\pi}\otimes (E_\pi^{2it})^{\msf T}\md\mu(\pi)\bigr)\mc{Q}_RJ_\vp
\mc{Q}_R^*\bigl(\int_{\IrrG}^{\oplus}
\I_{\msf{H}_\pi}\otimes(D_\pi^{2it})^{\msf T}\md\mu(\pi)\bigr)\mc{Q}_R\\
&=\int_{\IrrG}^{\oplus}\ov{ f_t(\pi)}\I_{\HS(\msf{H}_\pi)}\md\mu(\pi).
\end{split}\]
The above equations imply
\begin{equation}\label{eq16}
J_\vp\mc{Q}_R^*\bigl(\int_{\IrrG}^{\oplus} D_\pi^{2it}\otimes \I_{\ov{\msf{H}_\pi}}\md\mu(\pi)\bigr)^*\mc{Q}_RJ_\vp=
\mc{Q}_R^*\bigl(\int_{\IrrG}^{\oplus} 
f_t(\pi)E_\pi^{2it}\otimes \I_{\ov{\msf{H}_\pi}}\md\mu(\pi)\bigr)\mc{Q}_R
\end{equation}
and
\begin{equation}\label{eq17}
J_\vp\mc{Q}_R^*\bigl(\int_{\IrrG}^{\oplus} \I_{\msf{H}_\pi}\otimes (E_\pi^{2it})^{\msf T}\md\mu(\pi)\bigr)^*\mc{Q}_RJ_\vp=
\mc{Q}_R^*\bigl(\int_{\IrrG}^{\oplus} 
f_t(\pi)\I_{\msf{H}_\pi}\otimes (D_\pi^{2it})^{\msf T}\md\mu(\pi)\bigr)\mc{Q}_R.
\end{equation}
Equation \eqref{eq16} together with relation $\mc{Q}_L^*\mc{Q}_R=\nu^{-\frac{i}{8}} J_{\hvp}J_{\vp}$ (Corollary \ref{wn3}) gives us
\[\begin{split}
&\quad\;
J_\vp J_{\vp} J_{\hvp}\mc{Q}_L^*\bigl(\int_{\IrrG}^{\oplus} D_\pi^{2it}\otimes \I_{\ov{\msf{H}_\pi}}\md\mu(\pi)\bigr)^*\mc{Q}_L J_{\hvp} J_{\vp}J_\vp\\
&=
J_{\vp} J_{\hvp}\mc{Q}_L^*\bigl(\int_{\IrrG}^{\oplus} 
f_t(\pi)E_\pi^{2it}\otimes \I_{\ov{\msf{H}_\pi}}\md\mu(\pi)\bigr)\mc{Q}_L J_{\hvp} J_{\vp},
\end{split}\]
hence also (thanks to $\mc{Q}_LJ_{\hvp}\mc{Q}_L^*=\Sigma$, see Proposition \ref{stw1})
\[
\mc{Q}_L^*\bigl( \int_{\IrrG}^{\oplus}\I_{\msf{H}_\pi} \otimes (D_\pi^{-2it})^{\msf T}\md\mu(\pi)\bigr)^*\mc{Q}_L=
J_{\vp} \mc{Q}_L^*\bigl( 
\int_{\IrrG}^{\oplus} \ov{f_t(\pi)} \I_{\msf{H}_\pi} \otimes (
E_\pi^{-2it})^{\msf T}\md\mu(\pi)\bigr)\mc{Q}_L J_\vp.
\]
The last equation can be derived from equation \eqref{eq17} in a similar manner. Clearly we have $f_t(\pi)=f(\pi)^{it}$ for a measurable function $f\colon \IrrG\rightarrow \RR_{>0}$.
\end{proof}

In the second part of this section, we will transport operators $\nabla_{\hvp}^{it},\nabla_{\hpsi}^{it},\hat{\delta}^{it}\,(t\in\RR)$ to $\int_{\IrrG}^{\oplus} \HS(\msf{H}_\pi)\md\mu(\pi)$. We start with a formula expressing the action of $(\tau_t)_{t\in\RR}$ on matrix coefficients.
\begin{lemma}\label{lemat7}
For $\xi,\eta\in\int_{\IrrG}^{\oplus}\msf{H}_\pi\md\mu(\pi)$ and $t\in\RR$ we have
\[
\begin{split}
\tau_t(M^L_{\xi,\eta})&=
\nu^{-\frac{1}{2} it^2} \delta^{-it}
\int_{\IrrG}
(\id\otimes\omega_{D^{2it}_{\pi}\pi(\hat{\delta}_u^{-it})\xi_\pi,E_{\pi}^{2it}\eta_\pi})(U^{\pi *})\md\mu(\pi)\\
&=
\nu^{-\frac{1}{2} it^2}
\int_{\IrrG}
(\id\otimes\omega_{D_{\pi}^{-2it}\xi_\pi,E_{\pi}^{-2it}\pi(\hat{\delta}_u^{-it})\eta_\pi})(U^{\pi *})\md\mu(\pi)
\delta^{it},\\
\tau_t(M^R_{\xi,\eta})&=
\nu^{\frac{1}{2} it^2} 
\int_{\IrrG}
(\id\otimes\omega_{E^{2it}_{\pi}\xi_\pi,D_{\pi}^{2it}\pi(\hat{\delta}_u^{-it})\eta_\pi})(U^{\pi})\md\mu(\pi)
\delta^{it}\\
&=
\nu^{\frac{1}{2} it^2}
\delta^{-it}\int_{\IrrG}
(\id\otimes\omega_{E_{\pi}^{-2it}\pi(\hat{\delta}_u^{-it})\xi_\pi,D_{\pi}^{-2it}\eta_\pi})(U^{\pi})\md\mu(\pi).
\end{split}
\]
\end{lemma}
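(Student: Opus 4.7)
The four formulas split into two adjoint pairs via $(M^L_{\xi,\eta})^{*}=M^R_{\eta,\xi}$ and the $*$-automorphism property of $\tau_t$, so the $M^R$ formulas follow once the $M^L$ ones are in place. Within each of the two $M^L$ identities the two expressions must agree by rearrangement at the fibre level; it therefore suffices to derive (say) the second formula for $\tau_t(M^L_{\xi,\eta})$ and then obtain the first by such a rearrangement.

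The core reduction is to write $\tau_t$ as $\sigma^\psi_{-t}$ composed with an inner automorphism by a power of $\hat{\delta}$. From $\nabla_\psi^{it}=\hat{\delta}^{-it}P^{-it}$ in (\ref{eq20}), together with the pairwise commutation of $\hat{\delta}$, $P$ and $\nabla_\psi$ (the non-trivial instance $[\nabla_\psi,\hat{\delta}]=0$ comes from conjugating $\nabla_\psi^{it}=\hat{\delta}^{-it}P^{-it}$ by $\nabla_\psi^{is}$ and invoking $[P,\nabla_\psi]=0$), I get $P^{it}=\hat{\delta}^{-it}\nabla_\psi^{-it}$ and therefore
\[
\tau_t(y)=P^{it}yP^{-it}=\hat{\delta}^{-it}\sigma^\psi_{-t}(y)\hat{\delta}^{it}\qquad(y\in\Linf).
\]
Substituting $y=M^L_{\xi,\eta}$ and invoking Proposition~\ref{stw4} gives $\sigma^\psi_{-t}(M^L_{\xi,\eta})=\nu^{-\frac{1}{2}it^2}M^L_{D^{-2it}\xi,E^{-2it}\eta}\delta^{it}$.

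The remaining task is to describe conjugation of $M^L$ by $\hat{\delta}^{\pm it}$ and to handle the leftover $\delta$/$\hat{\delta}$ factors. Since $\hat{\delta}^{it}$ is group-like, the standard intertwining with the universal Kac--Takesaki operator reads $\WW(\hat{\delta}^{it}\otimes\I)\WW^{*}=\hat{\delta}^{it}\otimes\hat{\delta}_u^{it}$; applying $\id\otimes\pi$, taking the adjoint, contracting the second leg with $\omega_{\xi_\pi,\eta_\pi}$, and integrating against $\mu$ yields $\hat{\delta}^{-it}M^L_{\xi,\eta}\hat{\delta}^{it}=M^L_{\xi,\,\pi(\hat{\delta}_u^{-it})\eta}$. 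Plugging this into the previous formula and using the Weyl-type relation $\hat{\delta}^{is}\delta^{it}=\nu^{-ist}\delta^{it}\hat{\delta}^{is}$ (derived by conjugating $\nabla_\psi^{it}=\hat{\delta}^{-it}P^{-it}$ by $\delta^{is}$ and using $[P,\delta]=0$) to simplify $\hat{\delta}^{-it}\delta^{it}\hat{\delta}^{it}=\nu^{it^2}\delta^{it}$ produces the second $M^L$ formula; the first follows via a fibrewise Weyl-type commutator between $E_\pi$ (resp.\ $D_\pi$) and $\pi(\hat{\delta}_u^{it})$ -- the pointwise avatar of the $\hat{\delta}$--$\delta$ relation -- and the two $M^R$ formulas then follow by taking adjoints. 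The main obstacle will be precisely this fibrewise Weyl commutator, together with the careful bookkeeping needed to pass between $\hat{\delta}^{it}$ on $\LdG$ and $\pi(\hat{\delta}_u^{it})$ acting on the fibres $\msf{H}_\pi$.
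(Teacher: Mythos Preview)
Your route to the \emph{second} $M^L$ formula is sound and is a legitimate alternative to the paper's argument. One bookkeeping point: in the order you describe (first apply $\sigma^\psi_{-t}$, then conjugate by $\hat\delta^{\pm it}$) you end up with $\pi(\hat\delta_u^{-it})E_\pi^{-2it}\eta_\pi$ and a spurious $\nu^{it^2}$; reversing the order---writing $\tau_t=\sigma^\psi_{-t}\bigl(\hat\delta^{-it}\,\cdot\,\hat\delta^{it}\bigr)$, which you may do since you have shown $[\nabla_\psi,\hat\delta]=0$---gives exactly $\nu^{-\frac12 it^2}M^L_{D^{-2it}\xi,\,E^{-2it}\pi(\hat\delta_u^{-it})\eta}\,\delta^{it}$, i.e.\ the stated second formula, with no extra scalar.

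The real gap is your passage from the second $M^L$ formula to the first. You invoke a ``fibrewise Weyl-type commutator between $E_\pi$ (resp.\ $D_\pi$) and $\pi(\hat\delta_u^{it})$'', calling it the pointwise avatar of the $\hat\delta$--$\delta$ relation. But at this stage of the paper no such fibrewise relation is available: the identification $\pi(\hat\delta_u^{it})=\nu^{\frac{it^2}{2}}E_\pi^{-2it}D_\pi^{2it}$ and the commutation $\nu^{ist}D_\pi^{2is}E_\pi^{2it}=E_\pi^{2it}D_\pi^{2is}$ are established only later, in Proposition~\ref{stw8}, and the proof of Proposition~\ref{stw8} \emph{uses} Lemma~\ref{lemat7}. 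So your argument for the first formula is circular. The global Weyl relation $\hat\delta^{is}\delta^{it}=\nu^{-ist}\delta^{it}\hat\delta^{is}$ you derive does not by itself yield a commutation of $\pi(\hat\delta_u^{it})$ with $D_\pi$ or $E_\pi$, because $\delta^{it}\in\Linf$ has no direct expression through the fibrewise operators $D_\pi,E_\pi$ at this point.

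The paper avoids this by deriving the two $M^L$ formulas \emph{independently}, never comparing them at the fibre level. It inserts $\nabla_\psi^{\pm it}$ on the two different sides of $\mrW^*$ in the expression $(1\otimes P^{-it})\mrW^*(1\otimes P^{it})$ and uses the two intertwining relations $(\sigma^\vp_t\otimes\id)\mrW=(1\otimes P^{-it})\mrW(1\otimes\nabla_\psi^{-it})$ and $(\sigma^\psi_t\otimes\id)\mrW=(1\otimes\nabla_\psi^{-it})\mrW(1\otimes P^{-it})$. This yields $\tau_t(\cdot)=\sigma^\vp_t\bigl((\id\otimes\omega_{\pi(\hat\delta_u^{-it})\xi_\pi,\eta_\pi})(U^{\pi*})\bigr)$ and $\tau_t(\cdot)=\sigma^\psi_{-t}\bigl((\id\otimes\omega_{\xi_\pi,\pi(\hat\delta_u^{-it})\eta_\pi})(U^{\pi*})\bigr)$ separately; Proposition~\ref{stw4} then gives both displayed expressions. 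To repair your proof, you could mimic this for the first formula by writing $P^{it}=(J_{\hvp}\hat\delta^{-it}J_{\hvp})\nabla_\vp^{it}$ and working out the analogue of your group-like computation for $J_{\hvp}\hat\delta^{it}J_{\hvp}\in\Linfd'$, but this is no longer the simple ``rearrangement at the fibre level'' you claimed.
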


Later on in Proposition \ref{stw8} we will get simpler expressions for this action  (once we find out what $\pi(\hat{\delta}_u^{it})$ is).

\begin{proof}
The proof is based on several facts from the theory of locally compact quantum groups. First of all, we know that $\hat{\delta}^{it}=P^{-it}\nabla_{\psi}^{-it}$ (equation \eqref{eq20}). Next, \cite[Lemma 5.14]{Daele} gives us
\[
(\sigma^{\vp}_t\otimes\id) \mrW=(\I\otimes P^{-it})\mrW (\I\otimes \nabla_{\psi}^{-it}),\quad
(\sigma^{\psi}_t\otimes\id)\mrW=(\I\otimes \nabla_{\psi}^{-it})\mrW(\I\otimes P^{-it}),
\]
and $(\tau_t\otimes\id)\mrW=(\id\otimes\hat{\tau}_{-t})\mrW$. We note also that $\hat{\delta}^{it}\in \M(\CGD)$, $\hat{\delta}_u^{it}\in\M(\CGDu)$ and $\Lambda_{\whG}(\hat{\delta}_u^{it})=\hat{\delta}^{it}$ (\cite{Kustermans}). Fix $t\in\RR$, a representation $\pi\in\IrrG$ which factorises through $\CG$ (i.e.~$\pi=\pi'\circ \Lambda_{\whG}$ for a representation $\pi'\colon\CGD\rightarrow \B(\msf{H}_\pi)$) and arbitrary vectors $\xi_\pi,\eta_\pi\in\msf{H}_\pi$. We have
\[\begin{split}
&\quad\; 
\tau_t((\id\otimes\omega_{\xi_\pi,\eta_\pi})(U^{\pi *}))=
(\id\otimes\omega_{\xi_\pi,\eta_\pi})(\tau_t\otimes\id)(\id\otimes\pi)(\WW^*)\\
&=
(\id\otimes\omega_{\xi_\pi,\eta_\pi}\circ \pi')(\tau_t\otimes\id)(\mrW^*)=
(\id\otimes\omega_{\xi_\pi,\eta_\pi}\circ \pi')(\id\otimes\hat{\tau}_{-t})(\mrW^*)\\
&=
(\id\otimes\omega_{\xi_\pi,\eta_\pi}\circ \pi')
((\I\otimes P^{-it})(\mrW^*) (\I\otimes P^{it})).
\end{split}
\]
Now we write the above expression in two different ways: we have
\begin{equation}\label{eq1}
\begin{split}
&\quad\;
\tau_t((\id\otimes\omega_{\xi_\pi,\eta_\pi})(U^{\pi *}))=
(\id\otimes\omega_{\xi_\pi,\eta_\pi}\circ \pi')
((\I\otimes P^{-it}\nabla_{\psi}^{-it})
(\I\otimes \nabla_{\psi}^{it})(\mrW^*) 
(\I\otimes P^{it}))\\
&=
(\id\otimes\omega_{\xi_\pi,\eta_\pi}\circ \pi')
((\I\otimes\hat{\delta}^{it})\,(\sigma^{\vp}_t\otimes \id)(\mrW^*))=
\sigma^{\vp}_t((\id\otimes\omega_{\xi_\pi,\eta_\pi}\circ \pi)
((\I\otimes\hat{\delta}_{u}^{it})\,(\WW^*)))\\
&=
\sigma^{\vp}_t((\id\otimes\omega_{\pi(\hat{\delta}_u^{-it})\xi_\pi,\eta_\pi})
(U^{\pi *}))
 \end{split}\end{equation}
and
\begin{equation}\label{eq2}\begin{split}
&\quad\;
\tau_t((\id\otimes\omega_{\xi_\pi,\eta_\pi})(U^{\pi *}))=
(\id\otimes\omega_{\xi_\pi,\eta_\pi}\circ \pi')
((\I\otimes P^{-it})(\mrW^*) 
(\I\otimes \nabla_{\psi}^{-it})
(\I\otimes \nabla_{\psi}^{it}P^{it}))\\
&=
(\id\otimes\omega_{\xi_\pi,\eta_\pi}\circ\pi') (
(\sigma^{\psi}_{-t}\otimes\id)(\mrW^*) \, (\I\otimes \hat{\delta}^{-it}))
=
(\id\otimes\omega_{\xi_\pi,\eta_\pi}\circ\pi) (
(\sigma^{\psi}_{-t}\otimes\id)(\WW^*) \, (\I\otimes \hat{\delta}_{u}^{-it}))\\
&=
(\id\otimes\omega_{\xi_\pi, \pi(\hat{\delta}_{u}^{-it})\eta_\pi }) ((\sigma^{\psi}_{-t}\otimes\id)(U^{\pi *}))=
\sigma^{\psi}_{-t} (
(\id\otimes\omega_{\xi_\pi, \pi(\hat{\delta}_{u}^{-it})\eta_\pi })(U^{\pi *})).
 \end{split}\end{equation}
Let now $\xi,\eta$ be vectors in $\int_{\IrrG}^{\oplus}\msf{H}_\pi\md\mu(\pi)$. Then fields $(\pi(\hat{\delta}_u^{-it})\xi_\pi)_{\pi\in\IrrG}$, $(\pi(\hat{\delta}_u^{-it})\eta_\pi)_{\pi\in\IrrG}$ are also square integrable. Using equations \eqref{eq1}, \eqref{eq2} and Proposition \ref{stw4} we arrive at
\[\begin{split} 
&\quad\;
\tau_t ( M^{L}_{\xi,\eta})=
\tau_t( \int_{\IrrG} (\id\otimes\omega_{\xi_\pi,\eta_\pi})(U^{\pi *}) \md\mu(\pi))=
\int_{\IrrG}
\tau_t((\id\otimes\omega_{\xi_\pi,\eta_\pi})(U^{\pi *}))
\md\mu(\pi)\\
&=
\int_{\IrrG}
\sigma^{\vp}_t((\id\otimes\omega_{\pi(\hat{\delta}_u^{-it})\xi_\pi,\eta_\pi})(U^{\pi *}))\md\mu(\pi)=
\sigma^{\vp}_t(\int_{\IrrG}
(\id\otimes\omega_{\pi(\hat{\delta}_u^{-it})\xi_\pi,\eta_\pi})(U^{\pi *})\md\mu(\pi))\\
&=
\nu^{-\frac{1}{2} it^2} \delta^{-it}
\int_{\IrrG}
(\id\otimes\omega_{D^{2it}_{\pi}\pi(\hat{\delta}_u^{-it})\xi_\pi,E_{\pi}^{2it}\eta_\pi})(U^{\pi *})\md\mu(\pi)
\end{split}\]
and
\[\begin{split} 
&\quad\;
\tau_t ( M^{L}_{\xi,\eta})=
\tau_t( \int_{\IrrG} (\id\otimes\omega_{\xi_\pi,\eta_\pi})(U^{\pi *}) \md\mu(\pi))=
\int_{\IrrG}
\tau_t((\id\otimes\omega_{\xi_\pi,\eta_\pi})(U^{\pi *}))
\md\mu(\pi)
\\
&=
\int_{\IrrG}
\sigma^{\psi}_{-t}((\id\otimes\omega_{\xi_\pi,\pi(\hat{\delta}_u^{-it})\eta_\pi})(U^{\pi *}))\md\mu(\pi)=
\sigma^{\psi}_{-t}(\int_{\IrrG}
(\id\otimes\omega_{\xi_\pi,\pi(\hat{\delta}_u^{-it})\eta_\pi})(U^{\pi *})\md\mu(\pi))\\
&=
\nu^{-\frac{1}{2} it^2}
\int_{\IrrG}
(\id\otimes\omega_{D_{\pi}^{-2it}\xi_\pi,E_{\pi}^{-2it}\pi(\hat{\delta}_u^{-it})\eta_\pi})(U^{\pi *})\md\mu(\pi)
\delta^{it}.
\end{split}\]
The second pair of equations follow by applying the adjoint.
\end{proof}

Now we are ready to obtain the main results of this section. Even though we will prove them together, they are of different nature, hence we prefer to state them separately. First, we have a couple of equation expressing important operators on the level of direct integrals.

\begin{theorem}\label{tw1}
For every $t\in\RR$ we have
\[\begin{split}
\nabla_{\hpsi}^{-it}=\delta^{it}P^{it}&=
\mc{Q}_L^*
\bigl(\int_{\IrrG}^{\oplus} E_\pi^{2it} 
\otimes (E^{-2it}_\pi)^{\msf T}\md\mu(\pi)\bigr)\mc{Q}_L\\
&=
\mc{Q}_R^* 
\bigl( \int_{\IrrG}^{\oplus} 
D_{\pi}^{-2it} \otimes (D^{2 i t}_{\pi})^{\msf T} \md\mu(\pi)\bigr)
\mc{Q}_R,\\
\nabla_{\hvp}^{it}=J_\vp\delta^{it}P^{it}J_\vp &=
\mc{Q}_L^*
\bigl(\int_{\IrrG}^{\oplus} D_\pi^{-2it} 
 \otimes (D_{\pi}^{2it})^{\msf T}\md\mu(\pi)\bigr)\mc{Q}_L\\
&=
\mc{Q}_R^*
\bigl(\int_{\IrrG}^{\oplus} E_\pi^{2it} 
 \otimes (E_{\pi}^{-2it})^{\msf T}\md\mu(\pi)\bigr)\mc{Q}_R.
\end{split}
\]
\end{theorem}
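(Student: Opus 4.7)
\textbf{Reduction to two $\mc{Q}_L$-identities.} Applying \eqref{eq20} to the dual quantum group $\whG$ (invoking $\hat{\hat{\delta}}=\delta$ and self-duality of $P$) yields $\nabla_{\hpsi}^{-it}=\delta^{it}P^{it}$, and combining with $\nabla_{\hpsi}^{it}=J_\vp\nabla_{\hvp}^{-it}J_\vp$ (again from \eqref{eq20} on $\whG$) gives $\nabla_{\hvp}^{it}=J_\vp\delta^{it}P^{it}J_\vp$. Since $J_{\hvp}$ commutes with both $\nabla_{\hvp}^{it}$ and $\nabla_{\hpsi}^{-it}$ (standard property of modular data) and $\mc{Q}_R=\nu^{-i/8}\mc{Q}_L J_{\hvp}J_\vp$ by Corollary \ref{wn3}, a direct calculation produces the cross-identities
\[
\mc{Q}_R\nabla_{\hpsi}^{-it}\mc{Q}_R^*=\mc{Q}_L\nabla_{\hvp}^{it}\mc{Q}_L^*,\qquad \mc{Q}_R\nabla_{\hvp}^{it}\mc{Q}_R^*=\mc{Q}_L\nabla_{\hpsi}^{-it}\mc{Q}_L^*.
\]
Hence the four stated formulas reduce to the two $\mc{Q}_L$-identities for $\delta^{it}P^{it}$ and for $J_\vp\delta^{it}P^{it}J_\vp$.

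\textbf{Main computation via matrix coefficients.} For the $\mc{Q}_L$-formula for $\delta^{it}P^{it}$, I would evaluate on the dense set $\{\Lvp(M^L_{\xi,\eta})\}$ from Lemma \ref{lemat13}. The identity $P^{it}\Lvp(y)=\nu^{t/2}\Lvp(\tau_t(y))$ (a consequence of $\vp\circ\tau_t=\nu^{-t}\vp$ and unitarity of $P^{it}$) combined with the first expression for $\tau_t(M^L_{\xi,\eta})$ in Lemma \ref{lemat7}, together with the fact that $\delta^{it}\in\Linf$ acts as left multiplication on $\Lvp$, gives
\[
\delta^{it}P^{it}\Lvp(M^L_{\xi,\eta})=\nu^{t/2-it^2/2}\Lvp(M^L_{D^{2it}\pi(\hat{\delta}_u^{-it})\xi,\,E^{2it}\eta}).
\]
Applying $\mc{Q}_L$ via Lemma \ref{lemat5}(1) transports this to $\nu^{t/2-it^2/2}\int_{\IrrG}^{\oplus} E_\pi^{2it}\eta_\pi\otimes\ov{D_\pi^{1+2it}\pi(\hat{\delta}_u^{-it})\xi_\pi}\md\mu(\pi)$, which is supposed to agree with $(\int_{\IrrG}^{\oplus} E_\pi^{2it}\otimes(E_\pi^{-2it})^{\msf T}\md\mu)\,\mc{Q}_L\Lvp(M^L_{\xi,\eta})$.

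\textbf{Main obstacle and conclusion.} The main obstacle is the appearance of the a priori unknown fiber operators $\pi(\hat{\delta}_u^{-it})$; matching the two expressions forces a fiberwise identification of $\pi(\hat{\delta}_u^{-it})$ in terms of $D_\pi$ and $E_\pi$. To secure this identification, I would use the rearrangement $\delta^{it}P^{it}=\nu^{it^2}\nabla_\psi^{-it}\delta^{it}\hat{\delta}^{-it}$, obtained by combining $P^{it}=\nabla_\psi^{-it}\hat{\delta}^{-it}$ (from $\nabla_\psi^{it}=\hat{\delta}^{-it}P^{-it}$ in \eqref{eq20} together with the commutation of $\nabla_\psi$ and $\hat{\delta}$, itself a consequence of \eqref{eq20}) with the Weyl-type relation $\delta^{it}\nabla_\psi^{-it}=\nu^{it^2}\nabla_\psi^{-it}\delta^{it}$ from \eqref{eq20}. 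The factor $\nabla_\psi^{-it}\delta^{it}$ acquires a $\mc{Q}_L$ direct-integral form from Proposition \ref{stw6} via $J_{\hvp}$-conjugation (using Proposition \ref{stw1} to convert $J_{\hvp}$ into fiberwise $\Sigma$), while $\hat{\delta}^{-it}\in\Linfd$ is diagonalised by $\mc{Q}_L$ into $\int_{\IrrG}^{\oplus}\pi(\hat{\delta}_u^{-it})\otimes\I_{\ov{\msf H_\pi}}\md\mu(\pi)$ by Proposition \ref{stw7} (using $\Lambda_{\whG}(\hat{\delta}_u^{-it})=\hat{\delta}^{-it}$). Comparing this with the matrix-coefficient computation pins down $\pi(\hat{\delta}_u^{-it})$, and the product collapses to the stated $\int_{\IrrG}^{\oplus}E_\pi^{2it}\otimes(E_\pi^{-2it})^{\msf T}\md\mu(\pi)$. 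The second $\mc{Q}_L$-formula for $\nabla_{\hvp}^{it}=J_\vp\delta^{it}P^{it}J_\vp$ follows by a parallel direct computation or by conjugation with $J_\vp$ using Corollary \ref{wn4} to enact the $E\leftrightarrow D$ swap on the fibers.
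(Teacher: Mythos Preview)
Your reduction step is fine and matches the paper's use of $\mc{Q}_R^*\mc{Q}_L=\nu^{-i/8}J_{\hvp}J_\vp$. The real issue is the step you flag as the ``main obstacle'': your proposal to identify $\pi(\hat\delta_u^{-it})$ by comparing the matrix-coefficient computation with the rearrangement $\delta^{it}P^{it}=\nu^{it^2}\nabla_\psi^{-it}\delta^{it}\hat\delta^{-it}$ does not actually close. Carrying it out, the rearrangement gives $\mc{Q}_L\delta^{it}P^{it}\mc{Q}_L^*=\nu^{it^2/2}\int D_\pi^{2it}\pi(\hat\delta_u^{-it})\otimes(E_\pi^{-2it})^{\msf T}\,\md\mu(\pi)$, while the matrix-coefficient computation gives the action $\eta\otimes\ov{D_\pi\xi}\mapsto\nu^{(t-it^2)/2}E_\pi^{2it}\eta\otimes\ov{D_\pi^{1+2it}\pi(\hat\delta_u^{-it})\xi}$. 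Equating these two simple-tensor expressions only says that $D_\pi^{2it}\pi(\hat\delta_u^{-it})$ is a scalar multiple of $E_\pi^{2it}$; the phase of that scalar (a measurable function of $\pi$ and $t$) is not determined, and the second tensor slot only produces a relation between this unknown phase and the a~priori unknown commutation of $D_\pi$ with $E_\pi^{2it}$. So the comparison yields $\pi(\hat\delta_u^{-it})=c(t,\pi)D_\pi^{-2it}E_\pi^{2it}$ with $|c|=1$, but does not pin down $c$. Your fallback for the second $\mc{Q}_L$-formula via Corollary~\ref{wn4} runs into the same undetermined fibrewise scalar (the function $f$ there), and in the paper the elimination of $f$ (Lemma~\ref{lemat19}) is proved \emph{using} the explicit formula for $\hat\delta^{it}$ from Proposition~\ref{stw8}, so you cannot invoke it at this stage without circularity.

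The paper breaks this deadlock by a different device. It computes $J_\vp\delta^{it}J_\vp P^{it}$ (rather than $\delta^{it}P^{it}$) via the \emph{second} formula of Lemma~\ref{lemat7}, so that the $\delta^{it}$ on the right of the matrix coefficient becomes $J_\vp\delta^{-it}J_\vp$ on the GNS side and the $\nu^{t/2}$ cancels cleanly, yielding a genuinely decomposable expression $J_\vp\delta^{it}J_\vp P^{it}=\nu^{-it^2/2}\mc{Q}_L^*\bigl(\int E_\pi^{-2it}\pi(\hat\delta_u^{-it})\otimes(D_\pi^{2it})^{\msf T}\,\md\mu\bigr)\mc{Q}_L$. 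The key extra input is then the symmetry $J_{\hvp}(J_\vp\delta^{it}J_\vp P^{it})=(J_\vp\delta^{it}J_\vp P^{it})J_{\hvp}$ together with $\mc{Q}_L J_{\hvp}\mc{Q}_L^*=\Sigma$: the elementary appendix Lemma~\ref{lemat2}(3) (if a group $a_t\otimes b_t^{\msf T}$ commutes with the flip $\Sigma$ then $a_t=b_{-t}$) immediately forces $\nu^{-it^2/2}E_\pi^{-2it}\pi(\hat\delta_u^{-it})=D_\pi^{-2it}$, with no residual scalar ambiguity. Everything else (the $D$--$E$ commutation, the four displayed formulas, and the handling of $f$ in Lemma~\ref{lemat19}) flows from this identification. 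Your outline is missing precisely this $J_{\hvp}$/$\Sigma$ symmetry argument.
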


Next, we show that the modular element for $\whG$ can be expressed using operators $(D_\pi)_{\pi\in\IrrG}, (E_\pi)_{\pi\in\IrrG}$.

\begin{proposition}\label{stw8}
For all $t\in\RR$ we have
\[\begin{split}
\hat{\delta}^{it}&=
\nu^{-\frac{i}{2} t^2} \mc{Q}_L^* \bigl(
\int_{\IrrG}^{\oplus} D_\pi^{2it} E_\pi^{-2it}\otimes\I_{\ov{\msf{H}_\pi}}\md\mu(\pi)\bigr)\mc{Q}_L\\
&=
\nu^{-\frac{i}{2}t^2}
\mc{Q}_R^*\bigl(\int_{\IrrG}^{\oplus}
\I_{\msf{H}_\pi}\otimes(D_\pi^{-2it} E_\pi^{2it})^{\msf T}\md\mu(\pi)\bigr)\mc{Q}_R.
\end{split}\]
Moreover, $\pi(\hat{\delta}_{u}^{it})=\nu^{\frac{i t^2}{2}}E_{\pi}^{-2it}D_{\pi}^{2it} $ and $\nu^{ist}D_{\pi}^{2is}E_{\pi}^{2it}=E_{\pi}^{2it}D_{\pi}^{2is}$ for all $s,t\in \RR$ and almost all $\pi\in \IrrG$. We also get better expressions for the action of $(\tau_t)_{t\in\RR}$:
\[
\begin{split}
\tau_t(M^L_{\xi,\eta})&=
\delta^{-it} M^L_{E^{2it}\xi,E^{2it}\eta}=
M^L_{D^{-2it}\xi,D^{-2it}\eta} \delta^{it},\\
\tau_t(M^R_{\xi,\eta})&=
M^{R}_{E^{2it}\xi,E^{2it}\eta}\,
\delta^{it}=
\delta^{-it}\,
M^R_{D^{-2it}\xi,D^{-2it}\eta}
\end{split}
\]
for all $t\in \RR$ and $\xi,\eta\in\int_{\IrrG}^{\oplus}\msf{H}_\pi\md\mu(\pi)$. 
\end{proposition}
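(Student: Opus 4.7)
Equation~\eqref{eq20} gives $\nabla_\psi^{it}=\hat{\delta}^{-it}P^{-it}$ together with mutual commutativity of $\delta,P,\nabla_\psi$; combined with $\nabla_{\hpsi}^{it}=\delta^{-it}P^{-it}$ (Theorem~\ref{tw1} with $t\mapsto -t$), this yields the factorisation $\hat{\delta}^{it}=(\nabla_\psi^{-it}\delta^{it})\,\nabla_{\hpsi}^{it}$, in this specific order (since $\nabla_\psi$ and $\delta$ satisfy a Weyl relation, the order matters). Plugging in the $\mc{Q}_L$-expression from row~4 of Proposition~\ref{stw6} (which gives $\nabla_\psi^{-it}\delta^{it}$ directly) and the $\mc{Q}_L$-expression for $\nabla_{\hpsi}^{it}$ read off from Theorem~\ref{tw1}, the second-leg factors $(E_\pi^{-2it})^{\msf T}(E_\pi^{2it})^{\msf T}$ collapse to $\I$, producing the first claimed formula. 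The analogous computation with the $\mc{Q}_R$-expressions (row~1 of Proposition~\ref{stw6} after $t\mapsto -t$ together with the $\mc{Q}_R$-formula from Theorem~\ref{tw1} at $-t$) instead has the first legs $D_\pi^{-2it}D_\pi^{2it}$ cancel, leaving the second formula with $(D_\pi^{-2it}E_\pi^{2it})^{\msf T}$ on the right.

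Since $\hat{\delta}^{it}=\Lambda_{\whG}(\hat{\delta}_u^{it})$ is affiliated with $\LL^\infty(\whG)$, Proposition~\ref{stw7} places $\mc{Q}_L\hat{\delta}^{it}\mc{Q}_L^*$ inside $\int_{\IrrG}^{\oplus}\B(\msf{H}_\pi)\otimes\I_{\ov{\msf{H}_\pi}}\md\mu(\pi)$, consistent with the first formula. A strict-continuity extension of Theorem~\ref{PlancherelL}.3 from the generators $(\omega\otimes\id)\mrW=\Lambda_{\whG}((\omega\otimes\id)\WW)$ to multipliers gives $\mc{Q}_L\Lambda_{\whG}(x)\mc{Q}_L^*=\int_{\IrrG}^{\oplus}\pi(x)\otimes\I_{\ov{\msf{H}_\pi}}\md\mu(\pi)$ for all $x\in\M(\CGDu)$; applied with $x=\hat{\delta}_u^{it}$ and compared with the first formula this forces $\pi(\hat{\delta}_u^{it})=\nu^{-\frac{it^2}{2}}D_\pi^{2it}E_\pi^{-2it}$ for $\mu$-almost every $\pi$. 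The commutation relation now follows from the one-parameter group identity $\pi(\hat{\delta}_u^{it})\pi(\hat{\delta}_u^{is})=\pi(\hat{\delta}_u^{i(t+s)})$: expanding both sides and cancelling the outer factors yields $E_\pi^{-2it}D_\pi^{2is}=\nu^{-ist}D_\pi^{2is}E_\pi^{-2it}$, which under $t\mapsto -t$ reads $\nu^{ist}D_\pi^{2is}E_\pi^{2it}=E_\pi^{2it}D_\pi^{2is}$. Using this with $s=t$ and $t\mapsto -t$ then rewrites $\pi(\hat{\delta}_u^{it})$ as $\nu^{\frac{it^2}{2}}E_\pi^{-2it}D_\pi^{2it}$.

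For the simplified $\tau_t$-formulas I substitute $\pi(\hat{\delta}_u^{-it})=\nu^{\frac{it^2}{2}}E_\pi^{2it}D_\pi^{-2it}$ into Lemma~\ref{lemat7}; the commutation relation collapses $D_\pi^{2it}\pi(\hat{\delta}_u^{-it})=\nu^{\frac{it^2}{2}}D_\pi^{2it}E_\pi^{2it}D_\pi^{-2it}$ to $\nu^{-\frac{it^2}{2}}E_\pi^{2it}$. Pulling this scalar through the antilinear first slot of $\omega_{\cdot,\cdot}$ (using $\nu>0$, so $\overline{\nu^{-\frac{it^2}{2}}}=\nu^{\frac{it^2}{2}}$) produces a factor exactly cancelling the $\nu^{-\frac{1}{2}it^2}$ prefactor of Lemma~\ref{lemat7}, leaving $\tau_t(M^L_{\xi,\eta})=\delta^{-it}M^L_{E^{2it}\xi,E^{2it}\eta}$; the remaining three $\tau_t$-identities follow identically from the other three formulas of Lemma~\ref{lemat7}. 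The main subtlety is in the second paragraph: the extension of the identification $\mc{Q}_L\Lambda_{\whG}(\cdot)\mc{Q}_L^*=\int\pi(\cdot)\otimes\I_{\ov{\msf{H}_\pi}}\md\mu(\pi)$ from the bounded elements supplied by Theorem~\ref{PlancherelL}.3 to the unbounded one-parameter family $\hat{\delta}_u^{it}\in\M(\CGDu)$ is standard but must be carried out carefully so that the pointwise reading of $\pi(\hat{\delta}_u^{it})$ off the first leg is legitimate for $\mu$-almost all $\pi$.
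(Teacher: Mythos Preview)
Your computations are correct line by line, but there is a circularity with the paper's logical structure that you need to be aware of. The paper proves Theorem~\ref{tw1} and Proposition~\ref{stw8} \emph{together} in a single joint argument, and in that argument every line of Theorem~\ref{tw1} is established only after the identification $\pi(\hat\delta_u^{it})=\nu^{it^2/2}E_\pi^{-2it}D_\pi^{2it}$ (which is part of Proposition~\ref{stw8}); moreover the first $\mc{Q}_L$-line of Theorem~\ref{tw1} that you invoke for your $\mc{Q}_L$-formula is derived only after the $\hat\delta^{it}$-formula itself, via Lemma~\ref{lemat19} whose proof uses that very formula. So within the paper you cannot cite Theorem~\ref{tw1} as input to the proof of Proposition~\ref{stw8}.

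The paper proceeds in the reverse order from yours. Starting from Lemma~\ref{lemat7} it computes $\mc{Q}_LP^{it}\Lvp(M^L_{\xi,\eta})$ directly and obtains equation~\eqref{eq8}, which expresses $J_\vp\delta^{it}J_\vp P^{it}$ as a direct integral whose first leg still contains the unknown $\pi(\hat\delta_u^{-it})$. The key step is then to observe that $J_{\hvp}$ commutes with $J_\vp\delta^{it}J_\vp P^{it}$ and to apply Lemma~\ref{lemat2}(3): since $\mc{Q}_LJ_{\hvp}\mc{Q}_L^*=\Sigma$, this forces the two legs to be mutually inverse one-parameter groups, giving $\pi(\hat\delta_u^{it})$ directly. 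From there the commutation relation, the simplified $\tau_t$-formulas, and equation~\eqref{eq18} (the third line of Theorem~\ref{tw1}) follow; the $\hat\delta^{it}$-formulas are then obtained from the factorisation $\hat\delta^{it}=(J_\vp P^{-it}\delta^{-it}J_\vp)(J_\vp\delta^{it}\nabla_\psi^{-it}J_\vp)$ using only equation~\eqref{eq18} and Proposition~\ref{stw6}. Your alternative route to $\pi(\hat\delta_u^{it})$ via a strict-continuity extension of the Plancherel intertwining to $\M(\CGDu)$ is a legitimate substitute for the paper's use of Lemma~\ref{lemat2}(3), but it does not remove the dependence of your first paragraph on Theorem~\ref{tw1}.
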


\begin{proof}
Let $\xi,\eta$ be vector fields satisfying conditions from the first point of Lemma \ref{lemat5}. Note that vector fields $(D^{-2it}_\pi\xi_\pi)_{\pi\in\IrrG},(E_\pi^{-2it}\pi(\hat{\delta}_u^{-it})\eta_\pi)_{\pi\in\IrrG}$ also satisfy conditions of this lemma. Using the second equation from Lemma \ref{lemat7} we get:
\[\begin{split}
&\quad\;
\mc{Q}_L P^{it} \Lvp(M^L_{\xi,\eta})=
\nu^{\frac{t}{2}}\mc{Q}_L \Lvp(\tau_t(M^L_{\xi,\eta}))\\
&=
\nu^{\frac{t-it^2}{2}}\mc{Q}_L \Lvp( \int_{\IrrG}
(\id\otimes\omega_{D_{\pi}^{-2it}\xi_\pi,E_{\pi}^{-2it} \pi(\hat{\delta}_u^{-it})\eta_\pi})(U^{\pi *})\md\mu(\pi)
\,\delta^{it})\\
&=
\nu^{\frac{t-it^2}{2}}\mc{Q}_L 
J_\vp \sigma^{\vp}_{i/2}(\delta^{it})^* J_\vp\Lvp( \int_{\IrrG}
(\id\otimes\omega_{D_{\pi}^{-2it}\xi_\pi,E_{\pi}^{-2it}\pi(\hat{\delta}_u^{-it})\eta_\pi})(U^{\pi *})\md\mu(\pi)
)\\
&=
\nu^{\frac{-it^2}{2}}\mc{Q}_L 
 J_\vp\delta^{-it}J_\vp\mc{Q}_L^*
\int_{\IrrG}^{\oplus} E_{\pi}^{-2it} \pi(\hat{\delta}_u^{-it})\eta_\pi\otimes
\ov{ D_{\pi} D_{\pi}^{-2it} \xi_\pi } \md\mu(\pi)\\
&=
\nu^{\frac{-it^2}{2}}\mc{Q}_L 
 J_\vp\delta^{-it}J_\vp\mc{Q}_L^*
\bigl( \int_{\IrrG}^{\oplus}
E_{\pi}^{-2it} \pi(\hat{\delta}_u^{-it})\otimes (D_{\pi}^{2it} )^{\msf T}
\md\mu(\pi)\bigr)
\mc{Q}_L \Lvp(M^L_{\xi,\eta}).
 \end{split}
\]
Since the set of $\Lvp(M^L_{\xi,\eta})$ with $\xi,\eta$ as above form a lineary dense set (Lemma \ref{lemat13}), we get
\begin{equation}\label{eq8}
J_\vp\delta^{it}J_\vp P^{it}=
\mc{Q}_L^*\bigl(\int_{\IrrG}^{\oplus}
(\nu^{-\frac{it^2}{2}}
E_{\pi}^{-2it} \pi(\hat{\delta}_u^{-it}))\otimes (
D_{\pi}^{2it})^{\msf T} \md\mu(\pi)\bigr)\mc{Q}_L.
\end{equation}
Since $(J_\vp\delta^{it} J_\vp P^{it})_{t\in\RR}$, $( (D_\pi^{2it})^{\msf T})_{t\in\RR}$ are strongly continuous groups (see equation \eqref{eq20}) the same is true for $( \nu^{-\frac{it^2}{2}} E_\pi^{-2it} \pi(\hat{\delta}_u^{-it}))_{t\in\RR}$ (see point $2)$ of Lemma \ref{lemat2}). Using relations gathered in equation \eqref{eq20} one easily checks that $J_{\hvp}$ commutes with $J_{\vp} \delta^{it} J_{\vp} P^{it}$. Since $J_{\hvp}=\mc{Q}_L^* \Sigma\mc{Q}_L$, point $3)$ of Lemma \ref{lemat2} implies
\begin{equation}\label{eq3}
\nu^{-\frac{it^2}{2}} E_\pi^{-2it} \pi(\hat{\delta}_u^{-it})=
D_\pi^{-2it}\quad\Rightarrow\quad
\pi(\hat{\delta}_u^{it})=
\nu^{\frac{it^2}{2}} E_\pi^{-2it} D_\pi^{2it}
\quad(\pi\in\IrrG, t\in\RR)
\end{equation}
Let us choose $s,t\in \RR$ and use the fact that $( \pi(\hat{\delta}^{ip}_u))_{p\in\RR}$ is a group: we have
\[
\nu^{\frac{i(t+s)^2}{2}}E^{-2i(t+s)}_\pi D_\pi^{2i(t+s)}=
\pi(\hat{\delta}_u^{i(t+s)})=
\pi(\hat{\delta}_u^{it})
\pi(\hat{\delta}_u^{is})=
\nu^{\frac{it^2}{2}}E^{-2it}_\pi D_\pi^{2it}
\nu^{\frac{is^2}{2}}E^{-2is}_\pi D_\pi^{2is},
\]
and formula $\nu^{ist} E_\pi^{-2is} D_\pi^{2it}=
D_\pi^{2it} E_\pi^{-2is}$ easily follows. 
Equations expressing the action of $(\tau_t)_{t\in\RR}$ on matrix coefficients follows from the equation $\pi(\hat{\delta}_u^{it})=\nu^{\frac{i}{2}t^2} E_\pi^{-2it} D_\pi^{2it}$, commutation relation between $E_\pi^{it}$ and $D_\pi^{is}$ and Lemma \ref{lemat7}.
Let us now plug in the above results to equation \eqref{eq8}:
\begin{equation}\begin{split}\label{eq18}
J_\vp\delta^{it}J_\vp P^{it}&=
\nu^{-\frac{it^2}{2}}\mc{Q}_L^*\bigl(\int_{\IrrG}^{\oplus}
E_{\pi}^{-2it} \pi(\hat{\delta}_u^{-it})\otimes (
D_{\pi}^{2it})^{\msf T} \md\mu(\pi)\bigr)\mc{Q}_L\\
&=
\nu^{-\frac{it^2}{2}}\mc{Q}_L^*\bigl(\int_{\IrrG}^{\oplus}
E_{\pi}^{-2it} \nu^{\frac{it^2}{2}}E_\pi^{2it}D_\pi^{-2it}
\otimes (
D_{\pi}^{2it})^{\msf T} \md\mu(\pi)\bigr)\mc{Q}_L\\
&=
\mc{Q}_L^*\bigl(\int_{\IrrG}^{\oplus}
D_\pi^{-2it}
\otimes (
D_{\pi}^{2it})^{\msf T} \md\mu(\pi)\bigr)\mc{Q}_L,
 \end{split}\end{equation}
which is the third equation of Theorem \ref{tw1}. If we use formula $\mc{Q}_R^*\mc{Q}_L =\nu^{-\frac{i}{8}} J_{\hvp} J_{\vp}$, we readily get the second equation. Now we can derive the first pair of equations of Proposition \ref{stw8}. Since for all $t\in\RR$ we have $\nabla_\psi^{it}=\hat{\delta}^{-it} P^{-it}$ and $J_\vp\hat{\delta}^{it}=\hat{\delta}^{it} J_\vp$, it follows that $
\hat{\delta}^{it}=J_\vp \hat{\delta}^{it} J_\vp=
(J_\vp P^{-it} \delta^{-it} J_\vp) (J_\vp \delta^{it}\nabla_\psi^{-it}J_\vp),$
which we can express using equation \eqref{eq18} and Proposition \ref{stw6}:
\[\begin{split}
\mc{Q}_L\hat{\delta}^{it}\mc{Q}_L^*&=
\bigl( \int_{\IrrG}^{\oplus} D_\pi^{2it}\otimes(D_\pi^{-2it})^{\msf T}
\md\mu(\pi)\bigr) \nu^{-\frac{i}{2}t^2}
\bigl(\int_{\IrrG}^{\oplus}
E_\pi^{-2it} \otimes (D_\pi^{2it})^{\msf T}\md\mu(\pi)\bigr)\\
&=
\nu^{-\frac{i}{2} t^2} \int_{\IrrG}^{\oplus}
D_\pi^{2it} E_\pi^{-2it}\otimes \I_{\ov{\msf{H}_\pi}}\md\mu(\pi).
\end{split}\]
On the other hand, we also have $\hat{\delta}^{it}=( \nabla_\psi^{-it}\delta^{it} )(\delta^{-it}P^{-it})$, hence
\[
\mc{Q}_R \hat{\delta}^{it} \mc{Q}_R^*=
\nu^{-\frac{i}{2}t^2}
\bigl(\int_{\IrrG}^{\oplus} D_\pi^{-2it}\otimes(E_\pi^{2it})^{\msf T}\md\mu(\pi)\bigr)
\bigl( \int_{\IrrG}^{\oplus} D_\pi^{2it}\otimes (D_\pi^{-2it})^{\msf T}\md\mu(\pi)\bigr),
\]
which implies the second equation for $\hat{\delta}^{it}$ and ends the proof of Proposition \ref{stw8}. In order to finish the proof of Theorem \ref{tw1} we have to derive a lemma concerning the function $f$ introduced in Corollary \ref{wn4}.
\begin{lemma}\label{lemat19}
For all $t\in\RR$ we have
\[\begin{split}
J_{\vp} \mc{Q}_L^*\bigl( \int_{\IrrG}^{\oplus} f(\pi)^{it}\, \I_{\HS(\msf{H}_\pi)}\md\mu(\pi)\bigr)^* \mc{Q}_L J_\vp&=
\mc{Q}_L^*\bigl( \int_{\IrrG}^{\oplus} f(\pi)^{-it}\, \I_{\HS(\msf{H}_\pi)}\md\mu(\pi)\bigr) \mc{Q}_L,\\
J_{\vp} \mc{Q}_R^*\bigl( \int_{\IrrG}^{\oplus} f(\pi)^{it}\, \I_{\HS(\msf{H}_\pi)}\md\mu(\pi)\bigr)^* \mc{Q}_R J_\vp&=
\mc{Q}_R^*\bigl( \int_{\IrrG}^{\oplus} f(\pi)^{-it}\, \I_{\HS(\msf{H}_\pi)}\md\mu(\pi)\bigr) \mc{Q}_R.
\end{split}\]
\end{lemma}

\begin{proof}[Proof of Lemma \ref{lemat19}]
Recall that $J_\vp \hat{\delta}^{it} J_\vp=\hat{\delta}^{it}$, hence
\[
\nu^{\frac{i}{2} t^2} J_\vp\mc{Q}_L^* \bigl(
\int_{\IrrG}^{\oplus} D_\pi^{2it} E_\pi^{-2it}\otimes\I_{\ov{\msf{H}_\pi}}\md\mu(\pi)\bigr)\mc{Q}_LJ_\vp=
\nu^{-\frac{i}{2} t^2} \mc{Q}_L^* \bigl(
\int_{\IrrG}^{\oplus} D_\pi^{2it} E_\pi^{-2it}\otimes\I_{\ov{\msf{H}_\pi}}\md\mu(\pi)\bigr)\mc{Q}_L.
\]
Using the above relation and the fourth equation of Corollary \ref{wn4} we get
\[\begin{split}
&\quad\;
J_\vp \mc{Q}_L^*\bigl( \int_{\IrrG}^{\oplus} D_\pi^{2it} \otimes \I_{\ov{\msf{H}_\pi}} \md\mu(\pi) \bigr)^*\mc{Q}_L J_\vp\\
&=
J_\vp \mc{Q}_L^*\bigl( \int_{\IrrG}^{\oplus} D_\pi^{-2it}E_\pi^{2it} \otimes \I_{\ov{\msf{H}_\pi}} \md\mu(\pi) \bigr)\mc{Q}_L J_\vp\,
J_\vp \mc{Q}_L^*\bigl( \int_{\IrrG}^{\oplus} E_\pi^{2it} \otimes \I_{\ov{\msf{H}_\pi}} \md\mu(\pi) \bigr)^*\mc{Q}_L J_\vp\\
&=
\nu^{-it^2}\mc{Q}_L^*\bigl( \int_{\IrrG}^{\oplus} D_\pi^{-2it}E_\pi^{2it} \otimes \I_{\msf{H}_\pi} \md\mu(\pi) \bigr)\mc{Q}_L \,
\mc{Q}_L^*\bigl( \int_{\IrrG}^{\oplus} f(\pi)^{it}D_\pi^{2it} \otimes \I_{\ov{\msf{H}_\pi}} \md\mu(\pi) \bigr)\mc{Q}_L\\
&=
\mc{Q}_L^*\bigl( \int_{\IrrG}^{\oplus} f(\pi)^{it} 
E_\pi^{2it}
\otimes \I_{\ov{\msf{H}_\pi}} \md\mu(\pi) \bigr)\mc{Q}_L,
\end{split}\]
consequently
\[\begin{split}
&\quad\;
\mc{Q}_L^* \bigl(
\int_{\IrrG}^{\oplus} E_\pi^{2it}\otimes\I_{\ov{\msf{H}_\pi}}
\md\mu(\pi)\bigr)\mc{Q}_L=
J_\vp\bigl( J_\vp \mc{Q}_L^* \bigl(
\int_{\IrrG}^{\oplus} E_\pi^{2it}\otimes\I_{\ov{\msf{H}_\pi}}
\md\mu(\pi)\bigr)^*\mc{Q}_L J_\vp \bigr)^*J_\vp\\
&=
J_\vp \mc{Q}_L^* \bigl(
\int_{\IrrG}^{\oplus} f(\pi)^{it} D_\pi^{2it}\otimes\I_{\ov{\msf{H}_\pi}}
\md\mu(\pi)\bigr)^*\mc{Q}_L J_\vp\\
&=
\mc{Q}_L^* \bigl(
\int_{\IrrG}^{\oplus} f(\pi)^{it} E_\pi^{2it}\otimes\I_{\ov{\msf{H}_\pi}}
\md\mu(\pi)\bigr)\mc{Q}_L
J_\vp\mc{Q}_L^* \bigl(
\int_{\IrrG}^{\oplus} f(\pi)^{it} \I_{\HS(\msf{H}_\pi)}
\md\mu(\pi)\bigr)^*\mc{Q}_L J_\vp
\end{split}\]
and
\[
J_\vp\mc{Q}_L^* \bigl(
\int_{\IrrG}^{\oplus} f(\pi)^{it} \I_{\HS(\msf{H}_\pi)}
\md\mu(\pi)\bigr)^*\mc{Q}_L J_\vp
=
\mc{Q}_L^*\bigl(
\int_{\IrrG}^{\oplus} f(\pi)^{-it} \I_{\HS(\msf{H}_\pi)} \md\mu(\pi)\bigr)\mc{Q}_L.
\]
The second equation can be proved analogously or using equation $\mc{Q}_R^*\mc{Q}_L=\nu^{-\frac{i}{8}} J_{\hvp} J_{\vp}$.
\end{proof}
Using the above lemma and Corollary \ref{wn4} we can derive the first equation of Theorem \ref{tw1} out of the third one:
\[\begin{split}
&\quad\;\delta^{it} P^{it}= J_\vp J_\vp \delta^{it} P^{it} J_\vp J_\vp=
J_\vp \mc{Q}_L^* \bigl( \int_{\IrrG}^{\oplus}
D_\pi^{-2it} \otimes (D_\pi^{2it})^{\msf T}\md\mu(\pi)\bigr) \mc{Q}_LJ_\vp\\
&=
J_\vp \mc{Q}_L^* \bigl( \int_{\IrrG}^{\oplus}
f(\pi)^{it}\I_{\HS(\msf{H}_\pi)}\md\mu(\pi)\bigr) \mc{Q}_LJ_\vp
J_\vp \mc{Q}_L^* \bigl( \int_{\IrrG}^{\oplus}
f(\pi)^{-it}D_\pi^{-2it} \otimes (D_\pi^{2it})^{\msf T}\md\mu(\pi)\bigr) \mc{Q}_LJ_\vp\\
&=
\mc{Q}_L^* \bigl( \int_{\IrrG}^{\oplus}
f(\pi)^{it} \I_{\HS(\msf{H}_\pi)}\md\mu(\pi)\bigr) \mc{Q}_L
 \mc{Q}_L^* \bigl( \int_{\IrrG}^{\oplus}
f(\pi)^{-it}E_\pi^{2it} \otimes (E_\pi^{-2it})^{\msf T}\md\mu(\pi)\bigr) \mc{Q}_L\\
&=
\mc{Q}_L^* \bigl( \int_{\IrrG}^{\oplus}
E_\pi^{2it} \otimes (E_\pi^{-2it})^{\msf T}\md\mu(\pi)\bigr) \mc{Q}_L.
\end{split}\]
Now, the last equation of Theorem \ref{tw1} follows as usual from the formula relating $\mc{Q}_L$ and $\mc{Q}_R$. This concludes the proof of Theorem \ref{tw1} and Proposition \ref{stw8}.
\end{proof}

The commutation relation $\nu^{ist} D_\pi^{2is}E_\pi^{2it}=E_\pi^{2it}D_\pi^{2is}\,(t,s\in\RR)$ derived in the previous proposition has the following consequence.

\begin{corollary}\label{wn1}
If $\nu\neq 1$ then for almost all $\pi\in\IrrG$, operators $D_\pi,E_\pi$ have empty point spectrum. In particular, if $\nu\neq 1$ then the set of finite dimensional irreducible representations is of measure zero.
\end{corollary}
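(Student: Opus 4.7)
The plan is to exploit the Weyl-type commutation relation
\[
\nu^{ist} D_\pi^{2is} E_\pi^{2it} = E_\pi^{2it} D_\pi^{2is} \qquad (s,t\in\RR)
\]
which by Proposition \ref{stw8} holds for almost every $\pi\in\IrrG$. Rewriting it as $D_\pi^{2is} E_\pi^{2it} = \nu^{-ist} E_\pi^{2it} D_\pi^{2is}$, the operators $D_\pi^{2is}$ and $E_\pi^{2it}$ satisfy a non-trivial Weyl relation precisely when $\nu\neq 1$. First I would pick a full-measure set $X\subseteq\IrrG$ on which the relation holds for all \emph{rational} $s,t$ (a countable condition) and on which $\msf{H}_\pi$ is separable; by strong continuity of the two one-parameter groups the relation then extends to all real $s,t$ on $X$.

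Next, fix $\pi\in X$ and suppose towards contradiction that $D_\pi$ has a unit eigenvector $\xi$ with eigenvalue $\lambda>0$. Then $D_\pi^{2is}\xi=\lambda^{2is}\xi$ and the Weyl relation yields, for every $t\in\RR$,
\[
D_\pi^{2is}\bigl(E_\pi^{2it}\xi\bigr) = \nu^{-ist}\lambda^{2is}\,E_\pi^{2it}\xi = \bigl(\nu^{-t/2}\lambda\bigr)^{2is}\,E_\pi^{2it}\xi,
\]
so each nonzero vector $E_\pi^{2it}\xi$ lies in the eigenspace of $D_\pi$ for the eigenvalue $\nu^{-t/2}\lambda$. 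Because $\nu\neq 1$, these eigenvalues are pairwise distinct as $t$ ranges over $\RR$, so the vectors $\{E_\pi^{2it}\xi\mid t\in\RR\}$ form an uncountable pairwise orthogonal family of unit vectors in $\msf{H}_\pi$, contradicting separability. Hence $D_\pi$ has empty point spectrum for every $\pi\in X$, and the argument for $E_\pi$ is symmetric (swap the roles of $s$ and $t$, or simply note that the inverse relation $E_\pi^{2it} D_\pi^{2is}=\nu^{ist}D_\pi^{2is}E_\pi^{2it}$ is of the same form).

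For the final assertion, recall that $D_\pi$ and $E_\pi$ are strictly positive selfadjoint operators on $\msf{H}_\pi$. If $\pi$ is finite dimensional, then $D_\pi$ is a positive invertible matrix and therefore certainly has eigenvalues, violating the conclusion of the previous paragraph. Consequently $\{\pi\in\IrrG\mid \dim \msf{H}_\pi<\infty\}$ must be contained in the complement of $X$, which has measure zero.

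The only delicate step, which I expect to be the main obstacle, is the interchange of quantifiers between ``for all $s,t$'' and ``for almost all $\pi$'': the raw statement of Proposition \ref{stw8} gives a null set depending on $(s,t)$, and one must upgrade this to a single null set valid simultaneously for all real $s,t$. This is handled by restricting first to rationals (countably many conditions give a common full-measure set) and then using strong continuity of $s\mapsto D_\pi^{2is}$ and $t\mapsto E_\pi^{2it}$ to extend to all real parameters; the separability of the fibres $\msf{H}_\pi$ on a full-measure set follows from the fact that $\int^\oplus_{\IrrG}\HS(\msf{H}_\pi)\md\mu(\pi)$ is separable.
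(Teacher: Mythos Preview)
Your proof is correct and is precisely the standard argument the paper has in mind; the paper itself offers no details beyond declaring the corollary a consequence of the Weyl relation $\nu^{ist}D_\pi^{2is}E_\pi^{2it}=E_\pi^{2it}D_\pi^{2is}$. One small remark: your careful discussion of the quantifier interchange is not strictly needed, since Proposition~\ref{stw8} already asserts the relation ``for all $s,t\in\RR$ and almost all $\pi$'' (the exchange of quantifiers is absorbed into the proof of that proposition via strong continuity), so you may simply invoke it as stated.
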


\section{Special cases}\label{secspecial}
In this section we show how the properties of operators $(E_\pi)_{\pi\in\IrrG},(D_\pi)_{\pi\in\IrrG}$ are related to the modular theory of a type I, second countable locally compact quantum group (i.e.~properties of the modular element, scaling group, modular automorphism groups etc.). First, let us mention three lemmas which are probably well known to experts and which hold for a general locally compact group.
 
\begin{lemma}\label{lemat11}
The following conditions are equivalent:
\begin{enumerate} [label=\arabic*)]
\item $P^{it}\in \Linf'$ for all $t\in \RR$,
\item the scaling group of $\GG$ is trivial,
\item $P^{it}=\I$ for all $t\in\RR$.
 \end{enumerate}
\end{lemma}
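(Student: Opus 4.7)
The implications $(3) \Rightarrow (1)$ and $(3) \Rightarrow (2)$ are immediate, and $(1) \Leftrightarrow (2)$ is a tautology given the definition $\tau_t(x) = P^{it} x P^{-it}$: the scaling automorphism is trivial iff $P^{it}$ commutes with every element of $\Linf$. The entire content of the lemma is therefore $(2) \Rightarrow (3)$.

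My plan is to exploit the standard Kustermans--Vaes implementation formula
\[
P^{it} \Lvp(x) = \nu^{t/2} \Lvp(\tau_t(x)) \quad (x \in \mf{N}_\vp,\, t \in \RR),
\]
which is built into the very definition of $P$. One sees it is consistent by checking norms: both sides have square $\vp(x^*x)$, since the $\nu^t$ on the right from squaring $\nu^{t/2}$ is cancelled by the scaling $\vp \circ \tau_t = \nu^{-t} \vp$, matching the unitarity of $P^{it}$. Under the assumption $\tau_t = \id$ the formula collapses to $P^{it}\Lvp(x) = \nu^{t/2}\Lvp(x)$ for every $x \in \mf{N}_\vp$, and density of $\Lvp(\mf{N}_\vp)$ in $\LdG$ forces $P^{it} = \nu^{t/2}\,\I$ as bounded operators on $\LdG$.

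Finally, since $P$ is strictly positive and selfadjoint, $P^{it}$ is a unitary for every real $t$. A positive multiple of the identity is unitary only when the multiple equals $1$, so $\nu^{t/2} = 1$ for every $t \in \RR$, whence $\nu = 1$ and $P^{it} = \I$; as a bonus, triviality of the scaling constant falls out of the argument as well.

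The only real obstacle is pinning down the correct sign and normalisation in the implementation formula for $P^{it}$; this is a standard piece of the Kustermans--Vaes theory that can simply be cited from \cite{KustermansVaes, Daele}. After that, the proof of the nontrivial implication is no more than a density argument combined with the remark that a positive scalar unitary must be $1$.
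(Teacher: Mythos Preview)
Your proof is correct and follows essentially the same route as the paper: both use the implementation formula $P^{it}\Lvp(x)=\nu^{t/2}\Lvp(\tau_t(x))$ to deduce $P^{it}=\nu^{t/2}\I$ from triviality of $\tau$, and then observe that unitarity of $P^{it}$ forces $\nu^{t/2}=1$.
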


\begin{proof}
Implications $1)\Leftrightarrow 2)\Leftarrow 3)$ follow from the equation $\tau_t(x)=P^{it} x P^{-it}\,(x\in\Linf)$. For all $x\in\mf{N}_{\vp}$ and $t\in\RR$ we have $P^{it}\Lvp(x)=\nu^{\frac{t}{2}}\Lvp(\tau_t(x))$, hence $2)$ implies $P^{it}=\nu^{\frac{t}{2}}\I$. Taking the norm of both sides gives us $1=\nu^{\frac{t}{2}}$ hence $\nu=1$.
\end{proof}

\begin{lemma}\label{lemat12}$ $
\begin{enumerate}[label=\arabic*)]
\item The Haar integrals on $\GG$ are tracial if, and only if $P=\hat{\delta}=\I$.
\item $\whG$ is unimodular if, and only if $\nabla_{\vp}^{it}=\nabla_{\psi}^{-it}\,(t\in\RR)$.
\end{enumerate}
\end{lemma}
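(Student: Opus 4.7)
The plan is to derive both statements from the identities collected in \eqref{eq20}, treating part 2 first and then using it to complete part 1.

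For part 2, I start from $\nabla_\psi^{it} = \hat\delta^{-it}P^{-it}$ and $\nabla_\psi^{it} = J_{\hvp}\nabla_\vp^{-it}J_{\hvp}$, so that
\[
\nabla_\vp^{it} = J_{\hvp}\nabla_\psi^{-it}J_{\hvp} = J_{\hvp}P^{it}\hat\delta^{it}J_{\hvp} = P^{it}(J_{\hvp}\hat\delta^{it}J_{\hvp}),
\]
using $[P^{it},J_{\hvp}]=0$ (the self-dual analogue of $J_\vp P^{it}=P^{it}J_\vp$). Comparing with $\nabla_\psi^{-it} = P^{it}\hat\delta^{it}$, the condition $\nabla_\vp^{it}=\nabla_\psi^{-it}$ is equivalent to $J_{\hvp}\hat\delta^{it}J_{\hvp}=\hat\delta^{it}$, i.e.\ $\hat\delta^{it}\in \Linfd\cap\Linfd'$. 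The implication $(\Leftarrow)$ is then immediate from $\hat\delta=\I$. For $(\Rightarrow)$, this centrality combined with the group-like property $\hat\Delta(\hat\delta^{it}) = \hat\delta^{it}\otimes\hat\delta^{it}$ and the explicit formula $\pi(\hat\delta_u^{it})=\nu^{it^2/2}E_\pi^{-2it}D_\pi^{2it}$ from Proposition \ref{stw8} forces $\pi(\hat\delta_u^{it})$ to be scalar for $\mu$-a.e.\ $\pi\in\IrrG$, and the one-parameter group normalization then pins this scalar down to $1$, yielding $\hat\delta=\I$.

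For part 1, the implication $(\Leftarrow)$ is direct: $P=\hat\delta=\I$ yields $\nabla_\psi^{it}=\hat\delta^{-it}P^{-it}=\I$, so $\sigma^\psi=\id$ and $\psi$ is tracial; then $\nabla_\vp^{it}=J_{\hvp}\nabla_\psi^{-it}J_{\hvp}=\I$ gives $\vp$ tracial. For $(\Rightarrow)$, assume both integrals are tracial, so $\nabla_\vp=\nabla_\psi=\I$. The commutation $\nabla_\vp^{is}\delta^{it}=\nu^{ist}\delta^{it}\nabla_\vp^{is}$ forces $\nu=1$, and $\nabla_\psi^{it}=\hat\delta^{-it}P^{-it}=\I$ combined with $[P^{it},\hat\delta^{is}]=0$ (the dual of $[P^{it},\delta^{is}]=0$) gives $P^{it}=\hat\delta^{-it}$. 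Since in particular $\nabla_\vp^{it}=\I=\nabla_\psi^{-it}$, part 2 applies and yields $\hat\delta=\I$; hence $P^{it}=\hat\delta^{-it}=\I$ as well.

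The main obstacle is the rigidity step in part 2's $(\Rightarrow)$: a central group-like one-parameter family of unitaries in $\Linfd$ can a priori be nontrivial (it corresponds to a central character of $\GG$), so the conclusion $\hat\delta=\I$ exploits the additional structure of $\hat\delta$ as the modular Radon--Nikodym cocycle. The proposed route via Proposition \ref{stw8} and the Plancherel direct integral reduces the problem to a pointwise question on $\IrrG$; the careful extraction of the exact normalization is where the real work lies.
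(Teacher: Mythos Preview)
Your reduction in part 2 is correct up to the point where you show that $\nabla_\vp^{it}=\nabla_\psi^{-it}$ is equivalent to $J_{\hvp}\hat\delta^{it}J_{\hvp}=\hat\delta^{it}$, i.e.\ $\hat\delta^{it}\in\mc{Z}(\Linfd)$. But the step you flag as ``the main obstacle'' is a genuine gap, and the route you sketch through Proposition \ref{stw8} does not close it. Even granting that $\pi(\hat\delta_u^{it})$ is scalar for almost every $\pi$, there is no ``one-parameter group normalization'' that forces the scalar to be $1$: a priori $\pi(\hat\delta_u^{it})=c_\pi^{it}$ for some measurable $c\colon\IrrG\to\RR_{>0}$, and nothing you have written excludes $c\not\equiv 1$. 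Moreover, the lemma is stated (and used) for a general locally compact quantum group, so invoking the type~I Plancherel machinery of Proposition \ref{stw8} is out of place here.

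The paper's argument avoids this entirely with a one-line Tomita--Takesaki fact you are missing: for any element $z$ in the \emph{center} of a von Neumann algebra, the modular conjugation acts by $JzJ=z^*$ (\cite[Proposition 1.23]{TakesakiII}). Once you know $\hat\delta^{it}\in\mc{Z}(\Linfd)$, this gives $J_{\hvp}\hat\delta^{it}J_{\hvp}=\hat\delta^{-it}$; combined with your equation $J_{\hvp}\hat\delta^{it}J_{\hvp}=\hat\delta^{it}$ you get $\hat\delta^{2it}=\I$ for all $t$, hence $\hat\delta=\I$. This is the entire ``rigidity step'', and it needs no group-like property, no direct integrals, and no type~I hypothesis. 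For part 1, your strategy of reducing to part 2 is fine once part 2 is fixed; the paper instead argues directly that $P^{it}=\hat\delta^{-it}\in\Linfd$ together with $J_{\hvp}P^{it}J_{\hvp}=P^{it}$ puts $P^{it}$ in $\Linfd'$, and then Lemma \ref{lemat11} (applied to $\whG$, using $\hat P=P$) gives $P=\I$.
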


\begin{proof}
We will use formulas gathered in equation \eqref{eq20}. Equality $\nabla_{\psi}^{it}=\hat{\delta}^{-it}P^{-it}\,(t\in\RR)$ shows that $P=\hat{\delta}=\I$ implies $\nabla_{\psi}^{it}=\I$ and the traciality of $\psi$. Let us prove the converse impliation. If $\nabla_\psi^{it}=\I$ then $P^{it}=\hat{\delta}^{-it}\in\LL^{\infty}(\whG)$ for all $t\in\RR$. Since $P^{it}$ commutes with $J_{\hvp}$, we have $P^{it}=J_{\hvp}P^{it}J_{\hvp}\in\LL^{\infty}(\whG)'$ and by the previous lemma $P^{it}=\I=\hat{\delta}^{-it}$.\\
If $\whG$ is unimodular, then we have $J_{\hvp} \nabla_\vp^{-it} J_{\hvp}=\nabla_\psi^{it}=P^{-it}$ for all $t\in\RR$. Since $P^{-it}$ commutes with $J_{\hvp}$, it follows that $\nabla_\psi^{it}=\nabla_\vp^{-it}$. On the other hand, if $\nabla_\psi^{it}=\nabla_\vp^{-it}$ for all $t\in\RR$, then
\[
\hat{\delta}^{-it} P^{-it}=
\nabla_\psi^{it}=\nabla_\vp^{-it}=J_{\hvp} \nabla_{\psi}^{it} J_{\hvp}=
J_{\hvp}\hat{\delta}^{-it}P^{-it}J_{\hvp}=
J_{\hvp}\hat{\delta}^{-it}J_{\hvp} P^{-it}
\]
and we get $\hat{\delta}^{it}=J_{\hvp} \hat{\delta}^{it} J_{\hvp}$. This in particular means that $\hat{\delta}^{it}\in \mc{Z}(\Linfd)$ and \cite[Proposition 1.23]{TakesakiII} implies $\hat{\delta}^{it}=J_{\hvp} \hat{\delta}^{-it} J_{\hvp}$, unimodularity of $\whG$ follows.
\end{proof}

Although we will not use this result, let us mention here that if $\GG$ is unimodular then $\sigma^{\hvp}_t(x)=\hat{\tau}_t(x)=\hat{\delta}^{-\frac{it}{2}} x \hat{\delta}^{\frac{it}{2}}$ and $\Delta_{\whG}(\sigma^{\hvp}_t(x))=(\sigma^{\hvp}_t\otimes\sigma^{\hvp}_t)\Delta_{\whG}(x)$ for all $t\in\RR,x\in\Linfd$. It is a consequence of the formula $P^{-2it}=\delta^{it} (J_{\vp} \delta^{it} J_{\vp}) \hat{\delta}^{it} (J_{\hvp} \hat{\delta}^{it} J_{\hvp})$ and $\Delta_{\whG}(\hat{\delta}^{it})=\hat{\delta}^{it}\otimes\hat{\delta}^{it}$ (see \cite[Theorem 5.20, Proposition 5.15]{Daele}.

\begin{lemma}\label{lemat18}
For all $t,s\in\RR$, if $\sigma^\vp_t=\sigma^\psi_s$ then $\nabla_\vp^{it}=\nabla_\psi^{is}$. If $(s,t)\neq (0,0)$ then also $\nu=1$.
\end{lemma}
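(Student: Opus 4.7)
The plan is to reduce both conclusions to the Radon--Nikodym-type identity $\sigma^{\psi}_r=\operatorname{Ad}(\delta^{ir})\circ\sigma^{\vp}_r$ on $\Linf$. This identity can be read off Proposition \ref{stw4}: the formulas for $\sigma^{\vp}_t(M^R_{\xi,\eta})$ and $\sigma^{\psi}_t(M^R_{\xi,\eta})$ differ precisely by conjugation with $\delta^{it}$, and the density of these matrix coefficients (Lemma \ref{lemat13}) extends the relation to all of $\Linf$.

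Given this identity, the hypothesis $\sigma^{\vp}_t=\sigma^{\psi}_s$ rearranges to $\sigma^{\vp}_{t-s}=\operatorname{Ad}(\delta^{is})$ on $\Linf$. To obtain $\nu=1$ when $(s,t)\neq(0,0)$, I evaluate both sides on $\delta^{ir}$: the Weyl-type commutation from \eqref{eq20} gives $\sigma^{\vp}_{t-s}(\delta^{ir})=\nu^{i(t-s)r}\delta^{ir}$, while $\operatorname{Ad}(\delta^{is})$ leaves $\delta^{ir}$ fixed. Hence $\nu^{i(t-s)r}=1$ for every $r\in\RR$, forcing $\nu=1$ whenever $t\neq s$. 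In the remaining subcase $t=s\neq 0$ the identity says $\delta^{is}$ is central in $\Linf$, so $\sigma^{\vp}_r(\delta^{is})=\delta^{is}$; comparing with $\sigma^{\vp}_r(\delta^{is})=\nu^{irs}\delta^{is}$ again forces $\nu=1$.

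For the equality of modular operators I will use the standard Kustermans--Vaes identification $\Lpsi(x)=\Lvp(x\delta^{1/2})$ on its natural core. Analytic continuation of the Weyl relation gives $\sigma^{\vp}_t(\delta^{1/2})=\nu^{t/2}\delta^{1/2}$, so for such $x$
\[
\nabla_{\vp}^{it}\Lpsi(x)=\Lvp\bigl(\sigma^{\vp}_t(x)\,\sigma^{\vp}_t(\delta^{1/2})\bigr)=\nu^{t/2}\Lpsi(\sigma^{\vp}_t(x))=\nu^{t/2}\Lpsi(\sigma^{\psi}_s(x))=\nu^{t/2}\nabla_{\psi}^{is}\Lpsi(x),
\]
and by density $\nabla_{\vp}^{it}=\nu^{t/2}\nabla_{\psi}^{is}$. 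Both sides are unitary, so taking operator norms gives $\nu^{t/2}=1$ and hence $\nabla_{\vp}^{it}=\nabla_{\psi}^{is}$, as required.

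The main technical obstacle is the bookkeeping behind the identification $\Lpsi(x)=\Lvp(x\delta^{1/2})$: one must verify that the relevant $x$ form a dense enough subspace (a common core on which both $\nabla_{\vp}^{it}$ and $\nabla_{\psi}^{is}$ can be compared) and carefully handle the analytic continuation of $\sigma^{\vp}_t$ applied to the unbounded $\delta^{1/2}$. This is standard Kustermans--Vaes material (cf.~\cite{KustermansVaes,Daele}), but it is the only nontrivial technical ingredient in the proof.
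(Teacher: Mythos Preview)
Your argument is correct, but it takes a longer route than the paper's. The paper invokes the single formula $\nabla_\psi^{is}\Lvp(x)=\nu^{-s/2}\Lvp(\sigma^\psi_s(x))$ for $x\in\mf{N}_\vp$ (from \cite[Remark~5.2~ii)]{Daele}), which immediately gives $\nabla_\psi^{-is}\nabla_\vp^{it}=\nu^{s/2}\I$ on $\Lvp(\mf{N}_\vp)$; taking norms yields $\nu^{s/2}=1$ and hence $\nabla_\vp^{it}=\nabla_\psi^{is}$ in one stroke, with the remaining $\nu=1$ case ($s=0$, $t\neq 0$) handled by the symmetric formula $\nabla_\vp^{it}\Lpsi(y)=\nu^{t/2}\Lpsi(\sigma^\vp_t(y))$. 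Your approach instead separates the two conclusions: you first extract $\nu=1$ algebraically from the Weyl relation evaluated on $\delta^{ir}$, then derive the modular-operator equality via the identification $\Lpsi(x)=\Lvp(x\delta^{1/2})$. This is more conceptual (it isolates exactly which commutation forces $\nu=1$) but carries more technical overhead, as you yourself note regarding cores and analytic continuation of $\sigma^\vp_t$ on $\delta^{1/2}$.

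One small point: your derivation of $\sigma^\psi_r=\operatorname{Ad}(\delta^{ir})\circ\sigma^\vp_r$ from Proposition~\ref{stw4} and Lemma~\ref{lemat13} has a gap, since Lemma~\ref{lemat13} only gives density of the $\Lpsi$-images of the $M^R_{\xi,\eta}$ in $\LdG$, not $\swot$-density of the $M^R_{\xi,\eta}$ themselves in $\Linf$. The identity is standard anyway---the paper quotes it directly from \cite[Theorem~3.11]{Daele} in the proof of Theorem~\ref{tw2}---so you can simply cite it rather than routing through the matrix coefficients.
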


\begin{proof}
For all $x\in \mf{N}_\vp$ we have $\nabla_\psi^{-is} \nabla_\vp^{it}\Lvp(x)=\nu^{\frac{1}{2}s}\Lvp(\sigma^{\psi}_{-s}(\sigma^{\vp}_t(x)))=\nu^{\frac{1}{2}s} \Lvp(x)$ (see \cite[Remark 5.2 ii)]{Daele}), hence $\nabla_\psi^{-is}\nabla_\vp^{it}=\nu^{\frac{1}{2}s} \I$. Taking the norm of both sides implies $\nu^{\frac{1}{2} s}=1$ and proves the first claim. If $s\neq 0$ then we get $\nu=1$, if $s=0$ and $(s,t)\neq (0,0)$ then $t\neq 0$ and we get $\nabla_\vp^{it}=\I$. Formula $\nabla_\vp^{it}\Lambda_\psi(y)= \nu^{\frac{t}{2}} \Lambda_\psi(\sigma^{\vp}_t(y))=\nu^{\frac{t}{2}} \Lambda_\psi(y)\,(y\in\mf{N}_\psi)$ implies $\nu=1$.
\end{proof}
The next theorem is the main result of this section. It presents a web of connections between various properties of a type I, second countable locally compact quantum group (and its dual).

\begin{theorem}\label{tw2}
Let $\GG$ be a second countable, type I locally compact quantum group. Consider the following conditions:
\begin{enumerate}[label=\arabic *)]
\item $D_\pi^{it}\in \CC\I_{\msf{H}_\pi}$ for all $t\in\RR$ and almost all $\pi\in \IrrG$,
\item $E_\pi^{it}\in \CC\I_{\msf{H}_\pi}$ for all $t\in\RR$ and almost all $\pi\in \IrrG$,
\item the Haar integrals on $\whG$ are tracial ( left $\Leftrightarrow$ right $\Leftrightarrow $ both),
\item the Haar integrals on $\GG$ are tracial ( left $\Leftrightarrow$ right $\Leftrightarrow $ both),
\item $\hat{\delta}^{it}\in \mc{Z}(\LL^{\infty}(\whG))$ for all $t\in\RR$,
\item $\GG$ is unimodular,
\item $E_\pi^{it}D_\pi^{-it}\in\CC \I_{\msf{H}_\pi}$ for all $t\in\RR$ and almost all $\pi$,
\item $E_\pi^{it} =D_\pi^{it}$ for all $t\in \RR$ and almost all $\pi\in \IrrG$,
\item $\whG$ is unimodular,
\item $E_\pi^{it} D_\pi^{it}\in \CC\I_{\msf{H}_\pi}$ for all $t\in\RR$ and almost all $\pi\in\IrrG$,
\item $\delta^{it}\in\mc{Z}(\Linf)$ for all $t\in\RR$,
\item $\sigma^{\vp}_t=\sigma^{\psi}_t$ for all $t\in\RR$.
\end{enumerate}
The following implications hold:
\[
\begin{tikzcd}
 1) \arrow[r,Leftrightarrow,""] 
\arrow[dd,Rightarrow,""]& 
2)
\arrow[d,Rightarrow,""]
 \arrow[r,Leftrightarrow,""]
 & 
3) 
&  && 4)
\arrow[dl,Rightarrow,""]
\arrow[d,Rightarrow,""] & 
\\
&
6) \arrow[r,Leftrightarrow,""]&
10) \arrow[r,Rightarrow,""]
& 11) \arrow[r,Leftrightarrow,""]&
12) &
8) \arrow[r,Leftrightarrow,""]
\arrow[d,Rightarrow,""] & 9)
&\\
7) \arrow[rrrrr,Leftrightarrow,""] &&&& &5)
& 
\end{tikzcd}
\]
Moreover, each of the above conditions implies $\nu=1$.
\end{theorem}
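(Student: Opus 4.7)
The plan is to exploit the structural results of Sections \ref{secpreliminaries}--\ref{secoperators} in three passes. First, I would cash in the equivalences that come essentially for free. The chain $1)\Leftrightarrow 2)\Leftrightarrow 3)$ is read off from point 4) of Theorems \ref{PlancherelL} and \ref{PlancherelR}. The equivalence $7)\Leftrightarrow 5)$ follows by comparing the two direct-integral formulas for $\hat{\delta}^{it}$ in Proposition \ref{stw8} with the description of $\Linfd$ and $\Linfd'$ in Proposition \ref{stw7}: centrality of $\hat{\delta}^{it}$ amounts to scalarity of the fiber operators $D_\pi^{2it} E_\pi^{-2it}$, which (after rescaling $t$ and passing to the adjoint) is exactly condition 7). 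Finally, $11)\Leftrightarrow 12)$ is standard: the Radon--Nikodym identity $\sigma^{\psi}_t(x)=\delta^{it}\sigma^{\vp}_t(x)\delta^{-it}$ shows that $\sigma^\vp_t=\sigma^\psi_t$ iff $\delta^{it}$ commutes with every $x\in\Linf$.

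Next I would handle the top-level downward implications. For $3)\Rightarrow 6)$ I would apply Lemma \ref{lemat12}.1 to the quantum group $\whG$: since $\hat{P}=P$ and $\hat{\hat{\delta}}=\delta$, traciality of the Haar integrals on $\whG$ is equivalent to $P=\delta=\I$, giving 6) immediately (and also $\nu=1$ via Lemma \ref{lemat11}). The implication $1)\Rightarrow 7)$ is trivial because both $D_\pi$ and $E_\pi$ are then scalars. For $4)\Rightarrow 12)$ note that traciality on $\GG$ gives $\sigma^\vp=\sigma^\psi=\id$; for $4)\Rightarrow 9)$ use Lemma \ref{lemat12}.2, since trivially $\nabla_\vp^{it}=\I=\nabla_\psi^{-it}$.

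The heart of the argument lies in the two equivalences $6)\Leftrightarrow 10)$ and $8)\Leftrightarrow 9)$. For $8)\Leftrightarrow 9)$ I would use Proposition \ref{stw8}: $\hat{\delta}=\I$ translates to $D_\pi^{2it}E_\pi^{-2it}=\nu^{it^2/2}\I$ for almost all $\pi$. Since $t\mapsto D_\pi^{2it}E_\pi^{-2it}$ must be a one-parameter group, the Gaussian factor $\nu^{it^2/2}$ is compatible with the group law only if $\nu=1$, and then $D_\pi^{2it}=E_\pi^{2it}$ forces $D_\pi=E_\pi$; the converse is symmetric. For $6)\Leftrightarrow 10)$, I would compare the two expressions
\[
\tau_t(M^L_{\xi,\eta})=\delta^{-it}M^L_{E^{2it}\xi,E^{2it}\eta}=M^L_{D^{-2it}\xi,D^{-2it}\eta}\,\delta^{it}
\]
from Proposition \ref{stw8}. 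Under $\delta=\I$ this becomes $M^L_{E^{2it}\xi,E^{2it}\eta}=M^L_{D^{-2it}\xi,D^{-2it}\eta}$, and Lemma \ref{lemat5} transports it to a fiberwise equality of rank-one operators in $\msf{H}_\pi\otimes\ov{\msf{H}_\pi}$, forcing $E_\pi^{2it}$ to be a scalar multiple of $D_\pi^{-2it}$ and hence giving 10). Conversely, 10) together with the commutation $\nu^{it^2}D^{2it}E^{2it}=E^{2it}D^{2it}$ of Proposition \ref{stw8} gives $\nu=1$; substituting $E^{2it}\xi=c_\pi(t)D^{-2it}\xi$ back into the two $\tau_t$-formulas yields $\delta^{-it}y\delta^{-it}=y$ for $y=M^L_{D^{-2it}\xi,D^{-2it}\eta}$, and by $\swot$-density of such matrix coefficients in $\Linf$ one extends the identity to all $y\in\Linf$; setting $y=\I$ gives $\delta^{2it}=\I$, i.e.~6). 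The remaining arrows $10)\Rightarrow 11)$ and $8)\Rightarrow 5)$ are then immediate.

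For the universal claim $\nu=1$: conditions 1)--4) yield $P=\I$ via Lemma \ref{lemat12} (applied to $\GG$ or $\whG$) and hence $\nu=1$ by Lemma \ref{lemat11}; conditions 5)--10) give $\nu=1$ either by the commutation relation $\nu^{ist}D^{2is}E^{2it}=E^{2it}D^{2is}$ once $D$ and $E$ become proportional, or by the Gaussian-group obstruction described above; and 11), 12) give $\nu=1$ by Lemma \ref{lemat18} with $s=t=1$. The main obstacle I anticipate is the $\swot$-density step in the converse $10)\Rightarrow 6)$: one must show that the linear span of the matrix coefficients $M^L_{\xi,\eta}$ (with $\xi,\eta$ satisfying the integrability constraints of Lemma \ref{lemat13}) is $\swot$-dense in $\Linf$ in a form strong enough to specialize $y=\I$. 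This should follow from Lemma \ref{lemat13} together with a standard approximation, but it is the only place where one leaves the purely algebraic manipulations of direct integrals.
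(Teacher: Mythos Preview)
Your outline is correct and follows the paper's argument almost everywhere; the one place where you genuinely diverge is the equivalence $6)\Leftrightarrow 10)$, and there your route has a soft spot that the paper's route avoids.

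For $6)\Rightarrow 10)$ the paper does not go through the $\tau_t$-formulas of Proposition \ref{stw8}. It uses Theorem \ref{tw1} directly: when $\delta=\I$ the first and third $\mc{Q}_L$-formulas for $\delta^{it}P^{it}$ and $J_\vp\delta^{it}P^{it}J_\vp$ both compute $P^{it}$, hence
\[
E_\pi^{2it}\otimes (E_\pi^{-2it})^{\msf T}=D_\pi^{-2it}\otimes (D_\pi^{2it})^{\msf T}
\]
for almost all $\pi$, and this immediately gives $D_\pi^{2it}E_\pi^{2it}\in\CC\I$. Your matrix-coefficient argument reaches the same conclusion, but through an extra transport step via Lemma \ref{lemat5}.

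For $10)\Rightarrow 6)$ the difference matters. The paper again stays at the operator level: once $E_\pi^{2it}=\lambda_{t,\pi}D_\pi^{-2it}$ (and $\nu=1$), the same two rows of Theorem \ref{tw1} coincide, giving $\delta^{it}P^{it}=J_\vp\delta^{it}P^{it}J_\vp$; since $J_\vp$ commutes with $P^{it}$ this yields $\delta^{it}=J_\vp\delta^{it}J_\vp$, so $\delta^{it}$ is central, and combining with the general identity $J_\vp\delta^{it}J_\vp=\delta^{-it}$ forces $\delta=\I$. No density argument is needed. Your approach instead produces $\delta^{-it}y\delta^{-it}=y$ for $y=M^L_{D^{-2it}\xi,D^{-2it}\eta}$ and then wants to specialise to $y=\I$ by $\swot$-density. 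Two remarks: first, Lemma \ref{lemat13} gives density of the $\Lvp$-images in $\LdG$, not $\swot$-density of the $M^L_{\xi,\eta}$ in $\Linf$; the latter is true, but it needs a separate argument (essentially injectivity of $\lambda$ combined with the fact that $\int^\oplus\pi\,\md\mu(\pi)$ realises the reduced representation of $\CGDu$), not a ``standard approximation'' from Lemma \ref{lemat13}. Second, even granting the density, the paper's route is strictly shorter.

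A small slip in your $8)\Leftrightarrow 9)$: the map $t\mapsto D_\pi^{2it}E_\pi^{-2it}$ is \emph{not} a one-parameter group a priori (the Weyl relation $\nu^{ist}D_\pi^{2is}E_\pi^{2it}=E_\pi^{2it}D_\pi^{2is}$ obstructs this unless $\nu=1$). What you actually have from $\hat\delta=\I$ is $D_\pi^{2it}=\nu^{it^2/2}E_\pi^{2it}$, and since both $t\mapsto D_\pi^{2it}$ and $t\mapsto E_\pi^{2it}$ \emph{are} groups, the Gaussian factor forces $\nu=1$; so your conclusion stands, but the justification should be phrased this way.
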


\begin{proof}
First, let us note that $\vp$ is tracial if and only $\psi$ is tracial: it is a consequence of the equation $\nabla_\psi^{it}=J_{\hvp} \nabla_{\vp}^{-it} J_{\hvp}\,(t\in\RR)$. Equivalence $1)\Leftrightarrow 2) \Leftrightarrow 3)$ is a part of the Desmedt's theorem, one can also deduce this from formulas for $\nabla_{\hvp},\nabla_{\hpsi}$ -- see Theorem \ref{tw1}. 
Equivalence $7)\Leftrightarrow 5)$ follows from the formula for $\hat{\delta}^{it}$ in Proposition \ref{stw8} and $\mc{Q}_L \Linfd \mc{Q}_L^*=\int_{\IrrG}^{\oplus} \B(\msf{H}_\pi)\otimes\I_{\ov{\msf{H}_\pi}}\md\mu(\pi)$ (see Proposition \ref{stw7}). Equivalence $8)\Leftrightarrow 9)$ is a straightforward consequence of Proposition \ref{stw8}.\\
Assume $6)$, i.e.~that $\GG$ is unimodular and let us derive $10)$. Fix $t\in\RR$. Theorem \ref{tw1} gives us
\[
P^{it}=
\mc{Q}_L^*\bigl(\int_{\IrrG}^{\oplus}
E_\pi^{2it}\otimes (E^{-2it}_\pi)^{\msf T}\md\mu(\pi)\bigr)\mc{Q}_L=
\mc{Q}_L^*\bigl(\int_{\IrrG}^{\oplus}
D_\pi^{-2it}\otimes (D^{2it}_\pi)^{\msf T}\md\mu(\pi)\bigr)\mc{Q}_L,
\]
which implies $E_\pi^{2it}\otimes (E^{-2it}_\pi)^{\msf T}=D_\pi^{-2it}\otimes
(D_\pi^{2it})^{\msf T}\,(\pi\in \IrrG)$. Consequently, $
D_\pi^{2it}E_\pi^{2it}S=SD_\pi^{2it}E_\pi^{2it}$ for all $S\in\HS(\msf{H}_\pi)$. This means that $D_\pi^{2it}E_\pi^{2it}=\lambda_t\I_{\msf{H}_\pi}$ for some $\lambda_t\in\CC$ and we arrive at the point $10)$. On the other hand, point $10)$ implies 
that there exists $\lambda_{t,\pi}\in \TT$ such that $E_\pi^{it} =\lambda_{t,\pi} D_\pi^{-it}$. It follows that $\nu=1$, moreover the first and the third row of Theorem \ref{tw1} implies $\delta^{it}=J_\vp \delta^{it} J_\vp$. This in particular means that $\delta^{it}$ belongs to the center of $\Linf$ -- we have $\delta^{it}=J_\vp (\delta^{it})^* J_\vp$ \cite[Proposition 1.23]{TakesakiII}. These two equations together imply $\delta=\I$.\\
The last equivalence, $11)\Leftrightarrow 12)$, follows easily from the formula $\sigma^{\psi}_t(x)=\delta^{it} \sigma^{\vp}_t(x)\delta^{-it}$ ($x\in\Linf,t\in\RR$, see \cite[Theorem 3.11]{Daele}).\\
The remaining implications are trivial. Let us now argue why all of the above conditions imply $\nu=1$. Clearly we only need to justify this for $7)$ and $11)$. If $E_\pi^{it} D_\pi^{-it}\in \CC\I_{\msf{H}_\pi}$ then $\nu^{ist}D_{\pi}^{2is}E_{\pi}^{2it}=E_{\pi}^{2it}D_{\pi}^{2is}$ forces $\nu=1$. If $\delta^{it}\in \mc{Z}(\Linf)$ then $\nu^{it^2}\delta^{it}=\sigma^\vp_t(\delta^{it})=\delta^{it}$ for all $t\in\RR$ (\cite[Proposition 1.23]{TakesakiII}), hence also in this case $\nu=1$.
\end{proof}

Let us now show how certain classes of quantum groups fit into the above diagram. In particular, these examples show that one-sided implications in the above theorem cannot be reversed.

\begin{proposition}
Let $\GG$ be a type I, second countable locally compact quantum group.
\begin{itemize}
\item If $\GG$ is classical and non-unimodular, then it satisfies $4)$ and does not satisfy $6)$.
\item If $\whG$ is classical and non-unimodular, then $\GG$ satisfies $3)$ and does not satisfy $9)$.
\item If $\GG$ is compact and not of Kac type, then it satisfies $6)$ and does not satisfy $5)$.
\item If $\GG$ is discrete and non-unimodular, then it satisfies $9)$ and does not satisfy $11)$.
\end{itemize}
\end{proposition}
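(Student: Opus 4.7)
The plan is to prove the four bullets by combining two ingredients: commutativity of $\Linf$ (resp.~$\Linfd$) in the classical setting, which automatically forces all n.s.f.~weights to be tracial; together with the standard description of the modular element in the compact/discrete duality in terms of the $\uprho$-matrices of the compact side, whose non-triviality controls whether the element lies in the centre.

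For the first bullet, $\GG$ classical means $\Linf$ is commutative, so every n.s.f.~weight on it is automatically a trace; in particular the Haar weights are tracial, yielding condition $4)$. Classical non-unimodularity is exactly non-triviality of the modular function, i.e.~$\delta\neq\I$, so $\GG$ is not unimodular and $6)$ fails. The second bullet is dual: $\Linfd$ commutative forces $\hvp$ and $\hpsi$ to be traces and gives $3)$, while classical non-unimodularity of $\whG$ means $\hat\delta\neq\I$, so $\whG$ is not unimodular and $9)$ fails.

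For the third bullet, $\GG$ compact gives $\vp=\psi$ and $\delta=\I$, so $\GG$ is unimodular and $6)$ holds. The dual $\whG$ is discrete, $\Linfd$ decomposes blockwise as $\prod_{\pi}\B(\msf{H}_\pi)$ with centre $\prod_\pi\CC\I_{\msf{H}_\pi}$, and the standard formula expresses $\hat\delta^{it}$ blockwise as a positive multiple of $\uprho_\pi^{2it}$. Hence $\hat\delta^{it}\in\mc{Z}(\Linfd)$ for all $t$ would require every $\uprho_\pi$ to be scalar, and the trace identity $\Tr(\uprho_\pi)=\Tr(\uprho_\pi^{-1})$ forces a positive scalar $\uprho_\pi$ to equal $\I$, contradicting the assumption that $\GG$ is not of Kac type; thus $5)$ fails. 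The fourth bullet is symmetric: $\GG$ discrete gives $\whG$ compact, hence $\hat\vp=\hat\psi$, $\hat\delta=\I$ and $9)$ holds; non-unimodularity of $\GG$ translates, via the analogous block formula for $\delta^{it}$ in $\Linf=\prod_\pi\B(\msf{H}_\pi)$, into some $\uprho_\pi$ of the compact dual being non-trivial, hence non-scalar, so $\delta^{it}\notin\mc{Z}(\Linf)$ and $11)$ fails.

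The main technical point is citing the explicit block decomposition of $\delta$ and $\hat\delta$ in terms of the $\uprho$-matrices for compact/discrete quantum groups (available from standard references such as \cite{NeshTu}); once this is in hand, each verification reduces to the elementary observation that a non-identity positive operator with $\Tr(\uprho)=\Tr(\uprho^{-1})$ cannot be scalar. A secondary point worth spelling out is the identification of ``classical non-unimodularity'' with $\delta\neq\I$ (resp.~$\hat\delta\neq\I$) in the quantum sense, which is by now standard but pins down precisely what the hypothesis of the first two bullets provides.
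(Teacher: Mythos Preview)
Your argument is correct. The paper itself gives no proof of this proposition at all; it simply states the result, remarks that the numbering refers to Theorem~\ref{tw2}, and lists concrete examples ($ax+b$, its dual, $\SUd$, its dual) showing each class is non-empty. Your write-up therefore supplies details the paper omits, and does so along the expected lines: commutativity of $\Linf$ (resp.~$\Linfd$) forces traciality in the classical bullets, while the block description of $\delta$ and $\hat\delta$ via the $\uprho$-matrices handles the compact/discrete bullets. One small wording slip: in the third bullet you write ``a positive multiple of $\uprho_\pi^{2it}$'', but $\uprho_\pi^{2it}$ is unitary; you mean that the $\pi$-block of $\hat\delta$ is a positive power of $\uprho_\pi$, so that $\hat\delta^{it}$ has $\pi$-block $\uprho_\pi^{\pm 2it}$. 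This does not affect the argument.
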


The numbering in the above proposition corresponds to the numbering introduced in Theorem \ref{tw2}. Clearly each of the above classes is non-empty: examples are given by the classical $ax+b$ group, its dual, the $\SUd$ group and its dual (see Example \ref{su2d}). At the end of this section let us derive a corollary of Theorem \ref{tw2}.

\begin{corollary}
Let $\GG$ be a type I, second countable locally compact quantum group. The Haar integrals on $\GG$ and $\whG$ are tracial if, and only if $\GG$ and $\whG$ are unimodular.
\end{corollary}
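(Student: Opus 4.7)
The forward implication is immediate from the web of implications collected in Theorem \ref{tw2}: $3)\Leftrightarrow 2)\Rightarrow 6)$ shows that traciality of the Haar integrals on $\whG$ implies $\GG$ unimodular, and $4)\Rightarrow 8)\Leftrightarrow 9)$ shows that traciality of the Haar integrals on $\GG$ implies $\whG$ unimodular.

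For the converse, assume $6)$ and $9)$, i.e.\ both $\GG$ and $\whG$ are unimodular. From $6)\Leftrightarrow 10)$ we obtain $E_\pi^{it}D_\pi^{it}\in\CC\I_{\msf{H}_\pi}$ for $\mu$-a.e.\ $\pi$, and from $9)\Leftrightarrow 8)$ we obtain $E_\pi^{it}=D_\pi^{it}$ for $\mu$-a.e.\ $\pi$. Combining these yields $D_\pi^{2it}\in\CC\I_{\msf{H}_\pi}$ for all $t\in\RR$ and almost every $\pi$. Since $t\mapsto D_\pi^{2it}$ is a strongly continuous one-parameter unitary group with every value scalar, the corresponding characters of $\RR$ are of the form $e^{ict}$, and an application of the spectral theorem forces $D_\pi$ to be a positive scalar multiple of $\I_{\msf{H}_\pi}$. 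Thus $D_\pi^{it}\in\CC\I_{\msf{H}_\pi}$, which is condition $1)$ of Theorem \ref{tw2}, so $3)$ holds.

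It remains to deduce $4)$. From $3)$ we know that $\hvp$ is tracial, hence $\nabla_{\hvp}=\I$. Theorem \ref{tw1} states $\nabla_{\hvp}^{it}=J_\vp\delta^{it}P^{it}J_\vp$, so together with $\delta=\I$ (from $6)$) this forces $P^{it}=\I$ for every $t\in\RR$, i.e.\ $P=\I$. Lemma \ref{lemat12} now asserts that the Haar integrals on $\GG$ are tracial precisely when $P=\hat{\delta}=\I$; we have $P=\I$ and $\hat{\delta}=\I$ (from $9)$), and so $4)$ holds, completing the argument. The only slightly nontrivial step is the short spectral argument identifying operators whose imaginary powers are all scalar; everything else is bookkeeping with Theorem \ref{tw2}, Theorem \ref{tw1}, and Lemma \ref{lemat12}.
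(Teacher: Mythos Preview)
Your proof is correct and follows essentially the same route as the paper: combine $6)\Leftrightarrow 10)$ and $8)\Leftrightarrow 9)$ to force $D_\pi$ (hence $E_\pi$) to be scalar, deduce $3)$, and then use the formula linking $\nabla_{\hvp}$ (or $\nabla_{\hpsi}$) to $\delta$ and $P$ together with Lemma \ref{lemat12} to obtain $4)$. The only cosmetic differences are that the paper handles the forward implication directly from Lemma \ref{lemat12} rather than via the diagram, and it uses $\nabla_{\hpsi}^{it}=\delta^{-it}P^{-it}$ in place of your $\nabla_{\hvp}^{it}=J_\vp\delta^{it}P^{it}J_\vp$; your added spectral remark making explicit why $D_\pi^{2it}\in\CC\I$ implies $D_\pi\in\CC\I$ is a detail the paper leaves implicit.
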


\begin{proof}
The right implication is an easy corollary of Lemma \ref{lemat12}. Assume that $\GG$ and $\whG$ are unimodular. Equivalences $8)\Leftrightarrow 9)$ and $6)\Leftrightarrow 10)$ of Theorem \ref{tw2} imply that $E_\pi=D_\pi\in\CC\I_{\msf{H}_\pi}$ for almost all $\pi\in\IrrG$. Then $2)\Leftrightarrow 3)$ of the same theorem implies that the Haar integrals on $\whG$ are tracial. Equalities $\nabla_{\hpsi}^{it}=\delta^{-it} P^{-it},\;
\nabla_{\psi}^{it}=\hat{\delta}^{-it} P^{-it}\,(t\in\RR)$ end the proof.
\end{proof}

\section{Examples}\label{secexamples}

\subsection{Group $\widehat{\mathrm{SU}_q(2)}$}\label{su2d}
Fix a real number $q\in\left]-1,1\right[\setminus\{0\}$. Let $\GG=\mathrm{SU}_q(2)$ be the compact quantum group introduced by Woronowicz in \cite{Woronowiczsu2} and let $\bbGamma$ be the dual discrete quantum group $\bbGamma=\widehat{\mathrm{SU}_q(2)}$. The \cst-algebra of continuous functions on the quantum space $\SUd$, $\mathrm{C}(\SUd)$ is the universal unital \cst-algebra generated by elements $\alpha,\gamma$ satisfying the following relations:
\[\begin{split}
\alpha^*\alpha + \gamma^*\gamma=\I,\quad\alpha\gamma&=q\gamma\alpha,\quad
\alpha\gamma^*=q\gamma^*\alpha,\\
\alpha\alpha^*+q^2\gamma\gamma^*=\I,\quad
\gamma\gamma^*&=\gamma^*\gamma.
\end{split}\]
The Haar integral of $\SUd$ is faithful on $\mathrm{C}(\SUd)$ and we have $\mathrm{C}^{u}(\SUd)=\mathrm{C}(\SUd)$ ($\SUd$ is coamenable, see \cite[Theorem 2.12]{bmt}). Furthermore,  the \cst-algebra $\mathrm{C}(\SUd)$ is separable and type I (see \cite[Theorem A2.3]{Woronowiczsu2}) hence $\bbGamma$ is an interesting example of a second countable, type I discrete quantum group\footnote{In this section $\bbGamma$ is the "main" group and $\GG$ is the "dual" one.}.  We will describe the Plancherel measure for this group and show how various operators related to $\bbGamma$ act on the level of direct integrals. Let us start with describing the measurable space $\IrrGamma$ (i.e.~the spectrum of $\mathrm{C}(\SUd)$). The following result is a reformulation of \cite[Theorem 3.2]{Vaksman}:
\begin{proposition}
Measurable space $\IrrGamma$ can be identified with the disjoint union of two circles $\TT\sqcup \TT=\{\psi^{1,\rho}\,|\,\rho\in\TT\}\cup \{\psi^{2,\lambda}\,|\,\lambda\in\TT\}$. Representations $\psi^{1,\rho}$ are one dimensional and given by
\[
\psi^{1,\rho}(\alpha)=\rho,\quad
\psi^{1,\rho}(\alpha^*)=\ov{\rho},\quad
\psi^{2,\rho}(\gamma)=0,\quad
\psi^{2,\rho}(\gamma^*)=0\quad(\rho\in\TT).
\]
Representations $\psi^{2,\lambda}$ act on a separable Hilbert space $\msf{H}_\lambda=\ell^2(\ZZ_+)$ with an orthonormal basis $\{\phi_k\,|\, k\in\ZZ_+\}$ via
\begin{alignat*}{5}
&\psi^{2,\lambda}(\alpha)\phi_k&&=\sqrt{1-q^{2k}} \phi_{k-1},\quad&&
\psi^{2,\lambda}(\alpha^*)\phi_k&&=\sqrt{1-q^{2(k+1)}} \phi_{k+1}, &&\\
&\psi^{2,\lambda}(\gamma)\phi_k&&=\lambda q^k \phi_{k},\quad&&
\psi^{2,\lambda}(\gamma^*)\phi_k&&=\ov{\lambda} q^k \phi_{k},&&\quad\quad (\rho\in\TT,k\in\ZZ_+),
\end{alignat*}
with the convention $\phi_{-n}=0\,(n\in\NN)$.
\end{proposition}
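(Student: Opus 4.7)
The plan is to classify all irreducible $*$-representations of $\mathrm{C}(\SUd)$ directly from the defining relations; the identification of $\IrrGamma$ with $\TT\sqcup\TT$ as a \emph{measurable} space then follows because both copies of $\TT$ carry their standard Borel structure, which matches the Mackey Borel structure on the spectrum of the separable type I \cst-algebra $\mathrm{C}(\SUd)$. I would organise everything around the spectral analysis of the normal element $\gamma$ in an arbitrary irreducible representation $\pi$.

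First, note the consequences of the relations: $\gamma$ is normal, $0\le \gamma^*\gamma\le \I$ (from $\alpha^*\alpha+\gamma^*\gamma=\I$), and the commutation rule $\alpha\gamma=q\gamma\alpha$ together with its adjoint gives $\alpha(\gamma^*\gamma)=q^2(\gamma^*\gamma)\alpha$. In any irreducible $\pi$ this forces $\mathrm{sp}(\pi(\gamma^*\gamma))\subseteq\{0\}\cup\{q^{2k}\,:\,k\in\ZZ_+\}$: a spectral value $s$ produces the spectral value $q^2 s$ (via conjugation by the partial isometry from the polar decomposition of $\pi(\alpha)$), and by iteration the spectrum must accumulate only at $0$. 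Two cases arise. If $\pi(\gamma)=0$, the remaining relations reduce to $\pi(\alpha)^*\pi(\alpha)=\pi(\alpha)\pi(\alpha)^*=\I$, so $\pi(\alpha)$ is unitary; irreducibility then forces $\dim\pi=1$ and $\pi(\alpha)=\rho\in\TT$, which is exactly $\psi^{1,\rho}$. If $\pi(\gamma)\neq 0$, then $1\in\mathrm{sp}(\pi(\gamma^*\gamma))$; pick a unit eigenvector $\phi_0$ with $\pi(\gamma^*\gamma)\phi_0=\phi_0$. From $\pi(\alpha^*\alpha)\phi_0=(\I-\pi(\gamma^*\gamma))\phi_0=0$ we obtain $\pi(\alpha)\phi_0=0$, and normality of $\gamma$ gives $\pi(\gamma)\phi_0=\lambda\phi_0$ for some $\lambda\in\TT$.

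Next, define $\phi_k=c_k\,\pi(\alpha^*)^k\phi_0$ with $c_k^{-1}=\prod_{j=1}^{k}\sqrt{1-q^{2j}}$ and check inductively, using $\alpha\alpha^*+q^2\gamma\gamma^*=\I$ together with $\alpha\gamma=q\gamma\alpha$, that $\{\phi_k\}_{k\in\ZZ_+}$ is an orthonormal system on which $\pi$ acts by the formulas defining $\psi^{2,\lambda}$; its closed linear span is a nonzero invariant subspace, hence equals the whole representation space. Finally, distinct values of $\lambda$ give inequivalent representations because $\lambda$ is recovered from $\pi$ as the unique eigenvalue of $\pi(\gamma)$ on the one-dimensional eigenspace $\ker\pi(\alpha)=\ker\pi(\alpha^*\alpha)$, which is a unitary invariant.

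The bulk of the calculation is routine but the genuine obstacle is the bookkeeping in the infinite-dimensional case: one must compute $\pi(\alpha)\phi_k$, $\pi(\gamma)\phi_k$, $\pi(\gamma^*)\phi_k$ directly from the relations and the recursive definition of $\phi_k$, and match the resulting scalars $\sqrt{1-q^{2k}}$, $\lambda q^k$, $\bar\lambda q^k$ exactly. Once this step is carried out the measurable-space identification is immediate: the parametrisations $\rho\mapsto\psi^{1,\rho}$ and $\lambda\mapsto\psi^{2,\lambda}$ are Borel bijections between $\TT\sqcup\TT$ and $\IrrGamma$, hence by the standard Borel isomorphism theorem they identify the two measurable structures.
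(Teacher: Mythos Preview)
The paper does not prove this proposition at all: it is stated as ``a reformulation of \cite[Theorem 3.2]{Vaksman}'' and simply cited. Your proposal therefore goes well beyond what the paper offers, giving a direct classification from the defining relations; this is the standard Vaksman--Soibelman argument and is essentially correct.

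There is one genuine gap. From ``$\pi(\gamma^*\gamma)\phi_0=\phi_0$'' and normality of $\gamma$ you conclude ``$\pi(\gamma)\phi_0=\lambda\phi_0$ for some $\lambda\in\TT$''. Normality only tells you that $\pi(\gamma)$ preserves the $1$-eigenspace $V_0=\ker\pi(\alpha)$ of $\pi(\gamma^*\gamma)$ and acts there as a unitary; it does \emph{not} force $\phi_0$ to be an eigenvector unless $\dim V_0=1$, which you have not yet shown (and a priori $V_0$ could be infinite dimensional with $\pi(\gamma)\restriction_{V_0}$ having no point spectrum). The fix is short: write $\pi(\alpha^*)=u\,(\I-q^2\pi(\gamma^*\gamma))^{1/2}$ with $u$ an isometry (the square root is invertible since $q^2<1$), check from $\gamma\alpha^*=q\alpha^*\gamma$ that $\pi(\gamma)u=qu\,\pi(\gamma)$, and observe that for any Borel $B\subseteq\TT$ the projection $\tilde Q_B=\sum_{k\ge0}u^k Q_B (u^*)^k$ (with $Q_B$ the spectral projection of $\pi(\gamma)\restriction_{V_0}$) commutes with $\pi(\alpha),\pi(\alpha^*),\pi(\gamma),\pi(\gamma^*)$. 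Irreducibility then gives $Q_B\in\{0,P_{V_0}\}$, so $\pi(\gamma)\restriction_{V_0}=\lambda\I_{V_0}$; now your invariant-subspace argument applies and yields $\dim V_0=1$ a posteriori. With this sentence added, your argument is complete.
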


In the next proposition we calculate the Plancherel measure of $\bbGamma$, the unitary operator $\mc{Q}_L$ and operators $(D_\pi)_{\pi\in\IrrGamma}$. In what follows, $\vp,\psi$ are the Haar integrals on $\bbGamma$ and $h$ is the Haar integral on $\GG=\SUd$.

\begin{proposition}
The Plancherel measure of $\bbGamma$ equals $0$ on $\{\psi^{1,\rho}\,|\,\rho\in\TT\}$ and the normalized Lebesgue measure on the second circle $\{\psi^{2,\lambda}\,|\,\lambda\in\TT\}$. Consequently, we will identify $\IrrGamma$ with $\TT$. Operators $\{D_\lambda\,|\, \lambda\in\TT\}$ are given by
\[
D_\lambda=(1-q^2)^{-\frac{1}{2}}\diag(1,|q|^{-1},|q|^{-2},\dotsc) \quad(\lambda\in\TT)
\]
with respect to the basis $\{\phi_k\,|\, k\in\ZZ_+\}$. Operator $\mc{Q}_L$ is given by
\[
\mc{Q}_L \colon\LdG\ni \Lambda_h(a)\mapsto\int_{\IrrGamma}^{\oplus}
\psi^{2,\lambda}(a) D_\lambda^{-1} \md\mu(\lambda)\in
\int_{\IrrGamma}^{\oplus} \HS(\msf{H}_\lambda) \md\mu(\lambda)\quad
(a\in \mathrm{C}(\SUd)).
\]
\end{proposition}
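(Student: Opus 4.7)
My plan is to propose the stated Plancherel data explicitly, define a candidate unitary $V$ by the formula in the proposition, and match it with the characterization of $\mc{Q}_L$ given by Theorem \ref{PlancherelL}.

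First, I would exploit coamenability of $\SUd$: since $\mathrm{C}^u(\SUd) = \mathrm{C}(\SUd)$, the associated reducing morphism $\Lambda_{\wh{\bbGamma}}$ is an isomorphism, and the defining relation $(\id \otimes \Lambda_{\wh{\bbGamma}})\WW = \mrW$ yields, for any $\alpha \in \LL^1(\bbGamma)$ and $\pi \in \IrrGamma$,
\[
(\alpha \otimes \id)(U^\pi) = (\alpha \otimes \pi)\WW = \pi\bigl((\alpha \otimes \id)\mrW\bigr) = \pi(\lambda(\alpha)).
\]
Hence the characterizing formula of Theorem \ref{PlancherelL}(2) becomes $\mc{Q}_L \Lambda_h(\lambda(\alpha)) = \int^{\oplus}_{\IrrGamma} \pi(\lambda(\alpha)) D_\pi^{-1} \md\mu(\pi)$, and by norm density of $\lambda(\LL^1(\bbGamma))$ in $\mathrm{C}(\SUd)$ this extends by continuity to $\mc{Q}_L \Lambda_h(a) = \int^{\oplus} \pi(a) D_\pi^{-1} \md\mu(\pi)$ for every $a \in \mathrm{C}(\SUd)$, whatever the chosen Plancherel data.

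Next I would invoke the classical Woronowicz formula for the Haar state,
\[
h(a) = (1-q^2)\int_\TT \Tr\bigl(\diag(q^{2k})_{k\geq 0}\,\psi^{2,\lambda}(a)\bigr)\md\lambda,
\]
with $\md\lambda$ normalized Lebesgue measure on $\TT$; this can be verified on the basis $\{\alpha^{*m}\alpha^n \gamma^{*p}\gamma^q\}$ of $\Pol(\SUd)$ by a direct computation using the explicit formulas for $\psi^{2,\lambda}$ and standard $q$-combinatorial identities. With $\mu$ and $D_\lambda$ defined as in the proposition, this formula gives
\[
\int_{\IrrGamma}\|\pi(a)D_\pi^{-1}\|_{\HS}^2 \md\mu(\pi) = (1-q^2)\int_\TT \Tr\bigl(\diag(q^{2k})\,\psi^{2,\lambda}(a^*a)\bigr)\md\lambda = h(a^*a),
\]
so the map $V\colon \Lambda_h(a)\mapsto \int^{\oplus} \psi^{2,\lambda}(a) D_\lambda^{-1}\md\mu(\lambda)$ is well-defined and isometric on the dense subspace $\Lambda_h(\mathrm{C}(\SUd))$, hence extends to an isometry on $\LdG$.

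Combining the two steps, $V$ agrees with the map characterizing $\mc{Q}_L$ on a dense subspace, so $V = \mc{Q}_L$ under these choices; in particular $V$ is unitary, which establishes the whole proposition. The main technical point is pinning down the Woronowicz formula with the correct $q$-dependent normalization; this is classical, but tracking the factors of $(1-q^2)$ and $q^{2k}$ requires care. A subtler conceptual issue is that the data of Theorem \ref{PlancherelL} is unique only up to the rescaling $(D_\pi, \md\mu) \mapsto (c(\pi) D_\pi, c(\pi)^2 \md\mu)$, so the proposition really asserts that a specific representative can be chosen; the vanishing of $\mu$ on the one-dimensional representations $\psi^{1,\rho}$ is part of this choice and is consistent precisely because the Woronowicz formula expresses $h$ entirely via the infinite-dimensional $\psi^{2,\lambda}$.
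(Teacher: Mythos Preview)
Your isometry computation is correct and essentially coincides with the paper's: both rely on Woronowicz's expression $h(a)=(1-q^2)\sum_{k\ge 0}q^{2k}\langle\phi_{k,0}|\pi_0(a)\phi_{k,0}\rangle$, which is equivalent to the integral formula you invoke. The reduction via coamenability in your first step is also valid and is implicitly used in the paper.

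The gap is in your third step. Writing ``$V$ agrees with the map characterizing $\mc{Q}_L$ on a dense subspace, so $V=\mc{Q}_L$ under these choices; in particular $V$ is unitary'' is circular. Theorem~\ref{PlancherelL} asserts the \emph{existence} of data $(\mu,D_\pi,\mc{Q}_L)$ with certain properties; it does not say that any isometry of the form $\Lambda_h(a)\mapsto\int^\oplus\pi(a)D_\pi^{-1}\md\mu(\pi)$ is automatically unitary, nor that any $(\mu,D_\pi)$ making this map isometric is a valid Plancherel pair. Your $V$ and the abstract $\mc{Q}_L$ from Desmedt's theorem do not even land in the same Hilbert space until you have shown the measures agree, so ``$V=\mc{Q}_L$'' has no meaning yet. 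To close the argument you must either cite a precise uniqueness/characterization theorem for Plancherel data (which you do not) or verify the remaining properties directly.

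The paper does the latter. It proves surjectivity of $V$ by exhibiting explicit matrix units $E_{n,l}\in\mathrm{C}(\SUd)$ with $\psi^{2,\lambda}(E_{n,l})\phi_k=\delta_{l,k}\phi_n$, and by twisting with $\gamma^m,\gamma^{*m}$ produces enough elements to show the image is dense in $\int^\oplus\HS(\msf{H}_\lambda)\md\mu(\lambda)$. It then checks both commutation relations of Theorem~\ref{PlancherelL}(3), establishes $\mc{Q}_L J_h\mc{Q}_L^*=\Sigma$ via the explicit action of $\sigma^h_t$ on $\alpha,\gamma$, and verifies the center condition~(5). These verifications, together with the cited characterization result, are what actually pin down the proposed $(\mu,D_\lambda)$ as Plancherel data; your proposal omits all of them.
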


\begin{proof}
Define $\mu$ to be the normalized Lebesgue measure on the second circle of $\IrrGamma=\TT\sqcup \TT$ and let $\mc{Q}_L$ be the operator given by the above formula. In order to show that these objects are the one given by Desmedt's theorem, we will use\footnote{This result is formulated only for type I quantum groups with finite dimensional irreducible representations. However, its proof is based on \cite[Lemma 3.2]{Krajczok} and proof of this lemma works just as well for more general groups with bounded operators $D_\pi^{-1}$, such as second countable, type I discrete quantum groups.}  point $7)$ of \cite[Theorem 3.3]{Krajczok}.  Let us start with showing that $\mc{Q}_L$ is well defined and unitary. First, it is clear that for $a\in \mathrm{C}(\SUd)$ the field of operators $(\psi^{2,\lambda}(a)D_\lambda^{-1})_{\lambda\in \TT}$ is measurable and square integrable. Consequently, we can introduce a densely defined linear map $\mc{Q}_L\colon \Lh(a)\mapsto \int_{\IrrGamma}^{\oplus}\psi^{2,\lambda}(a) D_\lambda^{-1}\md\mu(\lambda)$. Since $\|\mc{Q}_L \Lh(a)\|\le \|a\|\,(a\in \mathrm{C}(\SUd))$, the linear map $\mc{Q}_L \circ \Lh$ is bounded. Let us now show that $\mc{Q}_L$ is isometry, i.e.~$
\ismaa{\mc{Q}_L \Lh(a')}{\mc{Q}_L \Lh(a)}= \ismaa{\Lh(a')}{\Lh(a)}
$ for all $a,a'\in \mathrm{C}(\SUd)$. Since
\[\begin{split}
&\quad\;
\ismaa{\mc{Q}_L \Lh(a')}{\mc{Q}_L \Lh(a)}=
\bigl\langle \int_{\IrrGamma}^{\oplus} \psi^{2,\lambda}(a') D_\lambda^{-1}\md\mu(\lambda) \big|\int_{\IrrGamma}^{\oplus} \psi^{2,\lambda}(a) D_\lambda^{-1}\md\mu(\lambda)\bigr\rangle\\
&=
\bigl\langle \int_{\IrrGamma}^{\oplus} \psi^{2,\lambda}(\I) D_\lambda^{-1}\md\mu(\lambda) \big|\int_{\IrrGamma}^{\oplus} \psi^{2,\lambda}(a'^*a) D_\lambda^{-1}\md\mu(\lambda)\bigr\rangle=
\ismaa{\mc{Q}_L\Lh(\I)}{\mc{Q}_L \Lh(a'^*a)}
\end{split}\]
and $\ismaa{\Lh(a')}{\Lh(a)}=\ismaa{\Lh(\I)}{\Lh(a'^*a)}$, it is enough to consider the case $a'=\I$. Next, as maps $\mc{Q}_L\circ \Lh,\Lh$ are bounded and linear, it is enough to consider $a$ in a basis of $\Pol(\SUd)$, $\{\alpha^l\gamma^n\gamma^{*m},\alpha^{*l'}\gamma^n\gamma^{*m}\,|\, l,n,m\in \ZZ_+,l'\in\NN\}$ (see \cite[Theorem 1.2]{Woronowiczsu2}).\\
In order to calculate $\ismaa{\Lh(\I)}{\Lh(a)}$ we need to introduce a faithful representation $\pi_0$\\$\colon \mathrm{C}(\SUd)\rightarrow \B(\ell^2(\ZZ_+\times\ZZ))$ defined in \cite{Woronowiczsu2}. One can express the Haar integral $h$ as
\[
h(a)=(1-q^2)\sum_{k=0}^{\infty}q^{2k} \ismaa{\phi_{k,0}}{\pi_0(a)\phi_{k,0}}
\quad(a\in \mathrm{C}(\SUd)),
\]
where $\{\phi_{k,p}\,|\,(k,p)\in\ZZ_+\times\ZZ\}$ is the standard basis of $\ell^2(\ZZ_+\times\ZZ)$. Now, for $l,n,m\in\ZZ_+$ we have
\[
\ismaa{ \Lh(\I)}{\Lh(\alpha^l \gamma^n \gamma^{*m})}=
h(\alpha^l \gamma^n \gamma^{*m})=\delta_{l,0}(1-q^2)
\sum_{k=0}^{\infty}q^{2k}\delta_{n,m} q^{(n+m)k}=
\delta_{l,0}\delta_{n,m}\tfrac{1-q^2}{1-q^{2(1+n)}}
\]
and similarly $\ismaa{\Lh(\I)}{\Lh(\alpha^{*l} \gamma^n \gamma^{*m})}=\delta_{l,0}\delta_{n,m}\tfrac{1-q^2}{1-q^{2(1+n)}}$. On the other hand
\[\begin{split}
&\quad\;\ismaa{\mc{Q}_L \Lh(\I)}{\mc{Q}_L \Lh(\alpha^l\gamma^n \gamma^{*m})}=\bigl\langle \int_{\IrrGamma}^{\oplus} D_\lambda^{-1}\md\mu(\lambda) \big|
\int_{\IrrGamma}^{\oplus} \psi^{2,\lambda}(\alpha^l \gamma^n \gamma^{*m})D_\lambda^{-1}\md\mu(\lambda)\bigr\rangle\\
&=
\delta_{l,0}(1-q^2)\int_{\IrrGamma} \sum_{k=0}^{\infty}\ismaa{\phi_k}{
\lambda^{n-m}q^{(n+m)k}q^{2k} \phi_{k}} \md\mu( \lambda)\\
&=
\delta_{l,0}\delta_{n,m}(1-q^2)\sum_{k=0}^{\infty} q^{(n+m)k}q^{2k}=
\delta_{l,0}\delta_{n,m} \tfrac{1-q^2}{1-q^{2(1+n)}}.
\end{split}\]
In an analogous manner we check $
\ismaa{\mc{Q}_L \Lh(\I)}{\mc{Q}_L \Lh(\alpha^{*l}\gamma^n \gamma^{*m})}=
\delta_{l,0}\delta_{n,m} \tfrac{1-q^2}{1-q^{2(1+n)}}$. This shows that $\mc{Q}_L$ is isometry and consequently extends to the whole of $\LdG$. Let us now argue that $\mc{Q}_L$ is surjective. Fix $\lambda\in \TT$, $k,l\in\ZZ_+$. We have $\psi^{2,\lambda}(\gamma\gamma^*)\phi_k=q^{2k} \phi_k$, hence $
\psi^{2,\lambda}(\chi_{\{q^{2l}\}}(\gamma\gamma^*)) \phi_k=
\delta_{k,l}\phi_k$
(note that operator $\chi_{\{q^{2l}\}}(\gamma\gamma^*)$ belongs to $\mathrm{C}(\SUd)$ because $q^{2l}$ is an isolated point in the spectrum of $\gamma\gamma^*$). Next, for $n \in\ZZ_+$ the following holds
\[
\psi^{2,\lambda}( \alpha^n \chi_{q^{2l}}(\gamma\gamma^*)) \phi_k=
\delta_{k,l}(\prod_{a=0}^{n-1} (1-q^{2(k-a)})^{\frac{1}{2}})\phi_{k-n}=
\delta_{k,l} (\prod_{a=0}^{n-1} (1-q^{2(k-a)})^{\frac{1}{2}})\phi_{l-n}
\]
which (together with a similar reasoning for $\alpha^*$) implies that for all $l,n\in\ZZ_+$ there exists an operator $E_{n,l}\in\mathrm{C}(\SUd)$ such that $\psi^{2,\lambda}(E_{n,l})\phi_k=\delta_{l,k}\phi_n\;(k\in\ZZ_+,\lambda\in\TT)$. Next, for $m\in\ZZ_+$ we have
\[
\psi^{2,\lambda}(q^{-lm} E_{n,l}\gamma^m)\phi_k=\delta_{l,k}\lambda^m \phi_n,\quad
\psi^{2,\lambda}(q^{-lm} E_{n,l}\gamma^{*m})\phi_k=\delta_{l,k}\lambda^{-m} \phi_n\quad(k\in\ZZ_+,\lambda\in\TT)
\]
and consequently for any polynomial function $P$ in $\lambda,\ov{\lambda}$ and $n,l\in\ZZ_+$ an operator\\$
\int_{\IrrGamma}^{\oplus} P(\lambda) \psi^{2,\lambda}(E_{n,l})
\md\mu(\lambda)$ belongs to the image of $\mc{Q}_L$. By density of such polynomials in $\LL^2(\TT)$ it follows that for all $f\in\LL^{2}(\TT)$ 
\begin{equation}\label{eq21}
\int_{\IrrGamma}^{\oplus} f(\lambda) \psi^{2,\lambda}(E_{n,l}) \md\mu(\lambda)\in \mc{Q}_L (\LdG).
\end{equation}
We have an isomorphism (given by choice of bases) $
\int_{\IrrGamma}^{\oplus} \HS(\msf{H}_\lambda)\md\mu(\lambda)\simeq
\LL^2(\TT)\otimes \HS(\ell^2(\ZZ_+)),$
hence it is clear that operators as in \eqref{eq21} span a dense subspace in $\int_{\IrrGamma}^{\oplus} \HS(\msf{H}_\lambda) \md\mu(\lambda)$, and consequently $\mc{Q}_L$ is unitary. Let us now check the first commutation relation. We have
\[\begin{split}
&\quad\;
\mc{Q}_L \lambda^{\bbGamma}(\omega) \mc{Q}_L^* (\mc{Q}_L \Lh(a))=
\mc{Q}_L \Lh(\lambda^{\bbGamma}(\omega) a)=
\int_{\IrrGamma}^{\oplus} \psi^{2,\lambda}(\lambda^{\bbGamma}(\omega)a)D_\lambda^{-1} \md\mu(\lambda)\\
&=\int_{\IrrGamma}^{\oplus} \psi^{2,\lambda}(\lambda^{\bbGamma}(\omega))\psi^{2,\lambda}(a) D_\lambda^{-1}\md\mu(\lambda)=
\bigl(\int_{\IrrGamma}^{\oplus} \psi^{2,\lambda}(\lambda^{\bbGamma}(\omega))\otimes\I_{\ov{\msf{H}_\lambda}}\md\mu(\lambda)\bigr)\mc{Q}_L \Lh(a),
\end{split}\]
for all $\omega\in\ell^1(\bbGamma),a\in\mathrm{C}(\SUd)$ where $\lambda^{\bbGamma}(\omega)=(\omega\otimes\id)\mrW^{\bbGamma}$, hence
\begin{equation}\label{eq9}
\mc{Q}_L \lambda^{\bbGamma}(\omega)\mc{Q}_L^*=
\int_{\IrrGamma}^{\oplus} \psi^{2,\lambda}(\lambda^{\bbGamma}(\omega))\otimes\I_{\ov{\msf{H}_\lambda}}\md\mu(\lambda)\quad(\omega\in\ell^1(\bbGamma)).
\end{equation}
In order to show the second commutation relation, let us show that $\mc{Q}_L$ transports $J_{h}$ to the direct integral of adjoints. For $a\in \Pol(\SUd)$ we have
\[
\mc{Q}_L J_{h} \Lh(a)=\mc{Q}_L \Lh(\sigma^h_{-i/2}(a^*))=
\int_{\IrrGamma}^{\oplus}\psi^{2,\lambda}(\sigma^h_{-i/2}(a^*))
D_\lambda^{-1}\md\mu(\lambda).
\]
Next, observe that  $\psi^{2,\lambda}(\sigma^h_t(a))=D_\lambda^{-2it} \psi^{2,\lambda}(a) D_\lambda^{2it}$ for all $\lambda\in\TT,t\in\RR,a\in\Pol(\SUd)$. Indeed, we have $\sigma^h_t(\alpha)=|q|^{-2it} \alpha,\sigma^h_t(\gamma)=\gamma\,(t\in\RR)$ (\cite[Example 1.7.4]{NeshTu}) and consequently
\[
\psi^{2,\lambda}(\sigma^h_t(\gamma))=\psi^{2,\lambda}(\gamma)=D_\lambda^{-2it} \psi^{2,\lambda}(\gamma) D_\lambda^{2it}\quad(t\in\RR)
\]
and similarly for all $k\in\ZZ_+,t\in\RR$
\[\begin{split}
&\quad\;
D_\lambda^{-2it} \psi^{2,\lambda}(\alpha) D_\lambda^{2it}\phi_k=
(1-q^{2k})^{\frac{1}{2}} |q|^{-2ikt} |q|^{2i(k-1)t} \phi_{k-1}
=
|q|^{-2it} \psi^{2,\lambda}(\alpha) \phi_k
=\psi^{2,\lambda}(\sigma^h_t(\alpha))\phi_k.
\end{split}\]
It follows that for all $a\in \Pol(\SUd)$
\[
\mc{Q}_L J_{h}\Lh(a)=
\int_{\IrrGamma}^{\oplus}
D_\lambda^{-1}\psi^{2,\lambda}(a^*) D_\lambda D_\lambda^{-1} \md\mu(\lambda)=
\int_{\IrrGamma}^{\oplus}
(\psi^{2,\lambda}(a) D_\lambda^{-1})^*\md\mu(\lambda),
\]
hence $\mc{Q}_L J_{h} \mc{Q}_L^*$ equals $\Sigma=\int^{\oplus}_{\IrrGamma} J_{\msf{H}_\lambda}\md\mu(\lambda)$. Now we can show the second commutation relation. Formula $
\chi(\mrV^{\bbGamma})=(J_{h}\otimes J_{h})(\mrW^{\bbGamma})^*(J_{h}\otimes J_{h})$ (\cite[Proposition 5.9]{Daele}) implies that for all $\omega\in \ell^1(\bbGamma)$ we have $
(\omega\otimes\id)\chi(\mrV^{\bbGamma})=J_{h} ((\omega\circ R^{\bbGamma}\otimes\id)\mrW^{\bbGamma})^* J_{h}$ and consequently
\[\begin{split}
&\quad\;
\mc{Q}_L (\omega\otimes\id)\chi(\mrV^{\bbGamma}) \mc{Q}_L^*=
\mc{Q}_L J_{h} \mc{Q}_L^* \bigl(
\int_{\IrrGamma}^{\oplus} \psi^{2,\lambda}(\lambda^{\bbGamma}(\omega\circ R^{\bbGamma}))\otimes\I_{\ov{\msf{H}_\lambda}}\md\mu(\lambda)\bigr)^*
\mc{Q}_L J_{h} \mc{Q}_L^*\\
&=
\int_{\IrrGamma}^{\oplus} \I_{\msf{H}_\lambda}\otimes
\psi^{2,\lambda}(\lambda^{\bbGamma}(\omega\circ R^{\bbGamma}))^{\msf T}
\md\mu(\lambda)=
\int_{\IrrGamma}^{\oplus} \I_{\msf{H}_\lambda}\otimes
(\psi^{2,\lambda})^c(\lambda^{\bbGamma}(\omega))
\md\mu(\lambda),
\end{split}\]
which is the second commutation relation. We are left to show
\[
\mc{Q}_L (\Linf\cap\Linf') \mc{Q}_L^*=
\Diag(\int_{\IrrGamma}^{\oplus}\HS(\msf{H}_\lambda)\md\mu(\lambda)),
\]
let us first argue that
\begin{equation}\label{eq10}
\mc{Q}_L \Linf \mc{Q}_L^*=
\int_{\IrrGamma}^{\oplus}\B(\msf{H}_\lambda)\otimes\I_{\ov{\msf{H}_\lambda}}\md\mu(\lambda)).
\end{equation}
Inclusion $\subseteq$ follows from the commutation relation \eqref{eq9}. On the other hand, equation \eqref{eq9} and reasoning similar to the one showing that $\mc{Q}_L$ is unitary, implies that for any polynomial $P$ in $\lambda,\ov{\lambda}$ and $n,l\in\ZZ_+$ we have
\[
\int_{\IrrGamma}^{\oplus} P(\lambda) \psi^{2,\lambda}(E_{n,l})\otimes\I_{\ov{\msf{H}_\lambda}}\md\mu(\lambda)\in \mc{Q}_L \Linf \mc{Q}_L^*.
\]
$\swot$ density of polynomials in $\LL^{\infty}(\TT)$ and isomorphism $
\int_{\IrrGamma}^{\oplus} \B(\msf{H}_\lambda)\otimes\I_{\ov{\msf{H}_\lambda}}\md\mu(\lambda)\simeq \LL^{\infty}(\TT)\bar\otimes \B(\ell^2(\ZZ_+))
$
gives us \eqref{eq10}. Consequently
\[\begin{split}
\mc{Q}_L (\Linf\cap\Linf')\mc{Q}_L^*&=
\bigl( \int_{\IrrGamma}^{\oplus} \B(\msf{H}_\lambda)\otimes\I_{\ov{\msf{H}_\lambda}}\md\mu(\lambda)\bigr)\cap
\bigl( \int_{\IrrGamma}^{\oplus} \I_{\msf{H}_\lambda}\otimes
\B(\ov{\msf{H}_\lambda})\md\mu(\lambda)\bigr)\\
&=
\Diag(\int_{\IrrGamma}^{\oplus}\HS(\msf{H}_\lambda)\md\mu(\lambda)).
\end{split}\]
\end{proof}

In the next proposition we find an action of the operator $P^{it}$ on the level of direct integrals.

\begin{proposition}\label{stw9}
For each $t\in\RR$, operator $\mc{Q}_L P^{it} \mc{Q}_L^*$ acts on $\int_{\IrrGamma}^{\oplus} \HS(\msf{H}_\lambda) \md\mu(\lambda)$ as follows:
\[
\mc{Q}_LP^{it}\mc{Q}_L^*\colon
 \int_{\IrrGamma}^{\oplus} T_\lambda \md\mu(\lambda)\mapsto 
\int_{\IrrGamma}^{\oplus}T_{\lambda\, |q|^{2it}}\md\mu(\lambda).
\]
\end{proposition}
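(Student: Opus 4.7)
The strategy is to combine three ingredients: (a) the formula $P^{it}\Lh(a)=\Lh(\tau_t(a))$ for the scaling group of $\SUd$, (b) the covariance identity $\psi^{2,\lambda}\circ\tau_t=\psi^{2,\lambda|q|^{2it}}$, and (c) the fact that the operators $D_\lambda$ do not depend on $\lambda$.

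For (a), I would first identify $\tau_t$ on the generators of $\mathrm{C}(\SUd)$. Since $\SUd$ is compact, the scaling constant $\nu$ equals $1$, so $P^{it}\Lh(a)=\Lh(\tau_t(a))$ for $a\in\mathrm{C}(\SUd)$. Using $\tau_{-i}=S^2$ together with the Woronowicz antipode (determined by the unitarity of the fundamental corepresentation), $S(\alpha)=\alpha^*$, $S(\gamma)=-q\gamma$, $S(\gamma^*)=-q^{-1}\gamma^*$, $S(\alpha^*)=\alpha$, a direct computation gives $S^2(\alpha)=\alpha$ and $S^2(\gamma)=q^2\gamma$. As $\tau_t$ is a continuous one-parameter group of $\ast$-automorphisms preserving the $\cst$-norm, $\tau_t(\gamma)$ must be a unimodular scalar multiple of $\gamma$, and matching with $\tau_{-i}(\gamma)=q^2\gamma$ forces $\tau_t(\alpha)=\alpha$ and $\tau_t(\gamma)=|q|^{2it}\gamma$.

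For (b), I would check the identity on generators. On $\alpha$ it is immediate since $\psi^{2,\mu}(\alpha)$ is independent of $\mu$. On $\gamma$, one computes $\psi^{2,\lambda|q|^{2it}}(\gamma)\phi_k=(\lambda|q|^{2it})q^k\phi_k=|q|^{2it}\psi^{2,\lambda}(\gamma)\phi_k=\psi^{2,\lambda}(\tau_t(\gamma))\phi_k$, and analogously for $\gamma^*$ by taking adjoints. The identity then extends to $\Pol(\SUd)$ by multiplicativity and to $\mathrm{C}(\SUd)$ by continuity.

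Combining (a) and (b), for any $a\in\mathrm{C}(\SUd)$ we obtain
\[
\mc{Q}_L P^{it}\Lh(a)=\mc{Q}_L\Lh(\tau_t(a))=\int_{\TT}^{\oplus}\psi^{2,\lambda|q|^{2it}}(a)D_\lambda^{-1}\md\mu(\lambda).
\]
Writing $T_\lambda:=\psi^{2,\lambda}(a)D_\lambda^{-1}$ for the $\lambda$-component of $\mc{Q}_L\Lh(a)$, the equality $D_\lambda=D_{\lambda|q|^{2it}}$ from (c) gives $\psi^{2,\lambda|q|^{2it}}(a)D_\lambda^{-1}=T_{\lambda|q|^{2it}}$, whence $\mc{Q}_L P^{it}\Lh(a)=\int_{\TT}^{\oplus}T_{\lambda|q|^{2it}}\md\mu(\lambda)$. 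The vectors $\mc{Q}_L\Lh(a)$ span a dense subspace of $\int_{\TT}^{\oplus}\HS(\msf{H}_\lambda)\md\mu(\lambda)$, and the prescription $(T_\lambda)\mapsto(T_{\lambda|q|^{2it}})$ extends to a unitary operator on this space by rotation-invariance of the normalized Lebesgue measure $\mu$; the claim follows by density. The only mildly subtle point is the identification of $\tau_t(\gamma)$, which requires the antipode computation; everything else is a routine substitution.
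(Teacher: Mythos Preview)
Your proof is correct and follows essentially the same route as the paper: both use $P^{it}\Lambda_h(a)=\Lambda_h(\tau_t(a))$ (with $\nu=1$), the covariance $\psi^{2,\lambda}\circ\tau_t=\psi^{2,\lambda|q|^{2it}}$, and the $\lambda$-independence of $D_\lambda$, then conclude by density. The paper verifies the covariance by direct computation on the basis monomials $\alpha^l\gamma^n\gamma^{*m}$ and $\alpha^{*l}\gamma^n\gamma^{*m}$, whereas you state it as an identity of representations checked on generators---a slightly cleaner packaging of the same calculation. The paper simply cites \cite{NeshTu} for $\tau_t(\gamma)=|q|^{2it}\gamma$; your derivation via $\tau_{-i}=S^2$ is fine, though the assertion that $\tau_t(\gamma)$ must be a scalar multiple of $\gamma$ is not a consequence of norm-preservation alone---it relies on the fact that matrix coefficients in the $\uprho$-eigenbasis are eigenvectors of the scaling group, which you might want to make explicit.
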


Note that the above result implies that $\mc{Q}_L P^{it}\mc{Q}_L^*$ is not decomposable.

\begin{proof}
Let $\tilde{P}^{it}$ be the operator in the claim, i.e.~$\tilde{P}^{it}\colon \int_{\IrrGamma}^{\oplus} T_\lambda\md\mu(\lambda)\mapsto \int_{\IrrGamma}^{\oplus} T_{\lambda |q|^{2it} }\md\mu(\lambda)$. Clearly it is well defined and bounded. The scaling group of $\GG=\SUd$ acts as follows (\cite[Example 1.7.8]{NeshTu})
\[\begin{split}
&\tau^{\GG}_t(\alpha)=\alpha,\quad \tau^{\GG}_t(\alpha^*)=\alpha^*,
\quad\tau^{\GG}_t(\gamma)=|q|^{2it}\gamma,\quad
\tau^{\GG}_t(\gamma^*)=|q|^{-2it}\gamma^*\quad(t\in\RR).
\end{split}\]
Recall that $P^{it}$ satisfies $
P^{it} \Lh(a)=\Lh(\tau^{\GG}_t(a))$ for all $t\in\RR,a\in\mathrm{C}(\GG)$.
Fix $l,k,n,m\in\ZZ_+,\lambda\in\TT$ and corresponding operator $\alpha^{l}\gamma^n\gamma^{*m}$ in the basis of $\Pol(\GG)$. We have
\[\begin{split}
&\quad\;\psi^{2,\lambda}(\alpha^l \gamma^n \gamma^{*m})\phi_k=
 (\prod_{a=0}^{l-1} (1-q^{2(k-a)})^{\frac{1}{2}})
\lambda^{n-m} q^{k(n+m)}\phi_{k-l}\\
&=
|q|^{-2it(n-m)}
(\prod_{a=0}^{l-1} (1-q^{2(k-a)})^{\frac{1}{2}})
(\lambda\,|q|^{2it})^{n-m} q^{k(n+m)}\phi_{k-l}\\
&=
|q|^{-2it(n-m)} \psi^{2,\lambda |q|^{2it}} (
\alpha^l\gamma^n\gamma^{*m})\phi_k,
\end{split}\]
(recall that we use convention $\phi_{-p}=0$ for $p\in \NN$) and consequently
\[\begin{split}
&\quad\;
\mc{Q}_L P^{it}\Lh(\alpha^{l}\gamma^n \gamma^{*m})=
|q|^{2it(n-m)}\mc{Q}_L \Lh(\alpha^l \gamma^n \gamma^{*m})\\
&=
\int_{\IrrGamma}^{\oplus} \psi^{2,\lambda |q|^{2it}}(\alpha^{l}\gamma^n\gamma^{*m})
D_\lambda^{-1}\md\mu(\lambda)=
\tilde{P}^{it} \mc{Q}_L \Lh(\alpha^l\gamma^n\gamma^{*m}).
\end{split}\]
In a similar manner we check $\mc{Q}_L P^{it} \Lh(\alpha^{*l} \gamma^n \gamma^{*m})=
\tilde{P}^{it}\mc{Q}_L \Lh(\alpha^{*l} \gamma^n \gamma^{*m})$. The claim follows because $ \Lh(\Pol(\GG))$ is dense in $\LdG$.
\end{proof}

The last result of this section describes the action of an operator $\mc{Q}_L J_{\vp}\mc{Q}_L^*$.

\begin{proposition}\label{stw10}
Operator $\mc{Q}_L J_{\vp}\mc{}Q_L^*$ acts on $\int_{\IrrGamma}^{\oplus} \HS(\msf{H}_\lambda) \md\mu(\lambda)$ as follows:
\[
\mc{Q}_L J_{\vp}\mc{Q}_L^*\colon
 \int_{\IrrGamma}^{\oplus} T_\lambda \md\mu(\lambda)\mapsto 
\int_{\IrrGamma}^{\oplus}j_\lambda T_{-\sgn(q)\lambda} j_\lambda\md\mu(\lambda),
\]
where $j_\lambda$ is the antilinear operator on $\msf{H}_\lambda=\ell(\ZZ_+)$ given by $j_\lambda \phi_k=\phi_k\,(\lambda\in\TT,k\in\ZZ_+)$.
\end{proposition}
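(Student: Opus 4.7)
The plan is to verify the formula on the dense subset $\mc{Q}_L\Lh(\Pol(\SUd))$; by antilinear boundedness of both sides, density then finishes the argument. The reduction rests on the explicit formula
\[
J_\vp\Lh(a)=\Lh(\hat R(a)^{*})\qquad(a\in\Pol(\SUd)),
\]
where $\hat R$ is the unitary antipode of $\wh{\bbGamma}=\SUd$. This is obtained from the Kustermans--Vaes characterization $\hat R(y)=J_\vp y^{*}J_\vp$ for $y\in\LL^\infty(\SUd)$, combined with $J_\vp\Lh(1)=\Lh(1)$: writing $\Lh(a)=a\Lh(1)$,
\[
J_\vp\Lh(a)=(J_\vp a J_\vp)\bigl(J_\vp\Lh(1)\bigr)=\hat R(a^{*})\Lh(1)=\Lh(\hat R(a)^{*}).
\]
The values on generators are $\hat R(\alpha)=\alpha^{*}$ and $\hat R(\gamma)=-\sgn(q)\gamma$, read off from $S(\alpha)=\alpha^{*}$, $S(\gamma)=-q\gamma$ (via $S(u_{ij})=u_{ji}^{*}$ applied to the fundamental corepresentation $u=\bigl(\begin{smallmatrix}\alpha & -q\gamma^{*}\\ \gamma & \alpha^{*}\end{smallmatrix}\bigr)$) combined with $\hat R=S\circ\tau^{\SUd}_{i/2}$ and the given $\tau^{\SUd}_t(\gamma)=|q|^{2it}\gamma$.

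The second ingredient is the purely representation-theoretic identity
\[
j_\lambda\,\psi^{2,-\sgn(q)\lambda}(a)\,j_\lambda=\psi^{2,\lambda}(\hat R(a)^{*})\qquad(a\in\mathrm{C}(\SUd),\ \lambda\in\TT),
\]
which, since both sides are $*$-homomorphisms $\mathrm{C}(\SUd)\to\B(\ell^{2}(\ZZ_+))$, reduces to a direct check on $\alpha,\gamma$: both sides act on $\phi_k$ as $\sqrt{1-q^{2k}}\,\phi_{k-1}$ on $\alpha$, and as $-\sgn(q)\overline{\lambda}q^{k}\phi_k$ on $\gamma$. Since $D_\lambda$ is real and diagonal in $\{\phi_k\}$ (hence commutes with $j_\lambda$) and $D_{-\sgn(q)\lambda}=D_\lambda$, one assembles
\[
\mc Q_L J_\vp\Lh(a)=\mc Q_L\Lh(\hat R(a)^{*})=\int_{\IrrGamma}^{\oplus}\psi^{2,\lambda}(\hat R(a)^{*})\,D_\lambda^{-1}\md\mu(\lambda)=\int_{\IrrGamma}^{\oplus}j_\lambda\,\psi^{2,-\sgn(q)\lambda}(a)\,D_\lambda^{-1}\,j_\lambda\,\md\mu(\lambda),
\]
which is exactly the candidate operator applied to $\mc Q_L\Lh(a)=\int^{\oplus}\psi^{2,\lambda}(a)D_\lambda^{-1}\md\mu$.

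The main obstacle is to justify $J_\vp\Lh(1)=\Lh(1)$, used to extract the explicit formula for $J_\vp$. It is nontrivial because $\SUd$ is non-Kac and because the cyclic vector $\Lh(1)$ is attached to the $\SUd$-side GNS rather than the $\bbGamma$-side on which $J_\vp$ is originally defined. The shortest route is to observe that $\Lh(1)$ is a unit eigenvector of $P^{it}$, $\hat\delta^{it}$, and $\nabla_h^{it}$ for all $t\in\RR$ with eigenvalue one (since $\SUd$ is compact and unimodular, and its scaling group fixes $1$), which together with the explicit, manifestly self-adjoint shape $\mc Q_L\Lh(1)=\int^{\oplus}D_\lambda^{-1}\md\mu$ (hence $\Sigma$-invariant) forces $J_\vp\Lh(1)=\Lh(1)$ through the polar-decomposition identities of the Tomita operator $T'$ in Proposition \ref{stw11} together with the commutation relations \eqref{eq20} and the fact that $\nu=1$.
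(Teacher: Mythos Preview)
Your argument is correct and follows essentially the same route as the paper: compute the unitary antipode $\hat R$ of $\SUd$ on the generators, use $\hat R(a)=J_\vp a^{*}J_\vp$, and verify the claimed formula on the dense subspace $\Lh(\Pol(\SUd))$. The paper does this by a direct computation on the basis monomials $\alpha^{l}\gamma^{n}\gamma^{*m}$, whereas you package the same content as the $*$-homomorphism identity $j_\lambda\,\psi^{2,-\sgn(q)\lambda}(a)\,j_\lambda=\psi^{2,\lambda}(\hat R(a)^{*})$ and check it only on $\alpha,\gamma$; this is a slightly cleaner organisation but the substance is identical.

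One remark: both proofs hinge on $J_\vp\Lh(\I)=\Lh(\I)$. The paper simply uses it (writing $J_\vp\Lh(\alpha^{l}\gamma^{n}\gamma^{*m})=\alpha^{l}(-\sgn q)^{n}\gamma^{*n}(-\sgn q)^{m}\gamma^{m}J_\vp\Lh(\I)$ and then dropping the $J_\vp$), while you rightly flag it as the only nontrivial point. Your proposed justification via Proposition~\ref{stw11} and \eqref{eq20} does yield $P^{it}\Lh(\I)=\delta^{it}\Lh(\I)=\nabla_\vp^{it}\Lh(\I)=\Lh(\I)$, but the final step to $J_\vp\Lh(\I)=\Lh(\I)$ is not quite closed by those identities alone. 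The fact is nevertheless standard for compact quantum groups (it is equivalent to the formula $\hat J\Lh(a)=\Lh(R(a)^{*})$ on $\Pol(\GG)$), so this is a matter of citation rather than a genuine gap.
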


Note that this result implies that operator $\mc{Q}_L J_\vp \mc{Q}_L^*$ is not decomposable if $q>0$.

\begin{proof}
Using formula $R^{\GG}=S^{\GG}\tau_{i/2}^{\GG}$ and \cite[Equation 1.14]{Woronowiczsu2} one easily checks that
\[\begin{split}
&R^{\GG}(\alpha)=\alpha^*,\quad
R^{\GG}(\alpha^*)=\alpha,\quad
R^{\GG}(\gamma)=-\operatorname{sgn}(q)\gamma,\quad
R^{\GG}(\gamma^*)=-\operatorname{sgn}(q)\gamma^*.
\end{split}\]
On the other hand we have $R^{\GG}(a)=J_{\vp}a^* J_{\vp}$ for all $a\in \mathrm{C}(\SUd)$, hence
\[
J_{\vp}\alpha=\alpha J_{\vp},\quad
J_{\vp}\alpha^*=\alpha^* J_{\vp},\quad
J_{\vp}\gamma=-\operatorname{sgn}(q)\gamma^* J_{\vp},\quad
J_{\vp} \gamma^*=-\sgn(q) \gamma J_{\vp}.
\]
Denote by $\tilde{J}_{\vp}$ the operator from the claim and fix $\lambda\in\TT,k,n,m,l\in\ZZ_+$. We have
\[\begin{split}
&\quad\;
\psi^{2,\lambda}(\alpha^l \gamma^m \gamma^{* n}) \phi_k=
\lambda^{m-n} q^{(m+n)k} (\prod_{a=0}^{l-1}(1-q^{2(k-a)})^{\frac{1}{2}}) \phi_{k-l}\\
&=
(-\sgn(q))^{m+n}
(-\sgn(q)\lambda)^{m-n} q^{(m+n)k} (\prod_{a=0}^{l-1}(1-q^{2(k-a)})^{\frac{1}{2}}) \phi_{k-l}\\
&=
(-\sgn(q))^{m+n}
j_\lambda \psi^{2,-\sgn(q)\lambda} (\alpha^l \gamma^n \gamma^{* m})j_\lambda\phi_k,
\end{split}\]
consequently
\[\begin{split}
&\quad\;
\mc{Q}_L J_{\vp}  \Lh(\alpha^l\gamma^n\gamma^{*m})=
\mc{Q}_L\alpha^l (-\sgn(q))^n \gamma^{* n} (-\sgn(q))^m \gamma^m
J_{\vp}\Lh(\I)\\
&=
(-\sgn(q))^{n+m}
\int_{\IrrGamma}^{\oplus} \psi^{2,\lambda}(\alpha^l \gamma^{m} \gamma^{*n} )D_\lambda^{-1} \md\mu(\lambda)
\\
&=
\int_{\IrrGamma}^{\oplus}
j_\lambda \psi^{2,-\sgn(q)\lambda}(\alpha^l \gamma^n \gamma^{*m})
j_\lambda
D_\lambda^{-1}\md\mu(\lambda)
\\&=
\tilde{J}_{\vp} \int_{\IrrGamma}^{\oplus} \psi^{2,\lambda}(\alpha^l \gamma^n \gamma^{*m}) D_\lambda^{-1}\md\mu(\lambda)=
\tilde{J}_{\vp} \mc{Q}_L \Lh(\alpha^l \gamma^n \gamma^{*m}).
\end{split}\]
Equation $\mc{Q}_L J_\vp \Lh(\alpha^{* l}\gamma^n \gamma^{* m})=\tilde{J}_{\vp} \mc{Q}_L \Lh(\alpha^{* l}\gamma^n \gamma^{* m})$ can be checked similarly.
\end{proof}

\begin{remark}\label{uwaga1}
In propositions \ref{stw9}, \ref{stw10} we have expressed operators $P^{it}\,(t\in\RR)$ and $J_{\vp}$ on $\int_{\IrrGamma}^{\oplus} \HS(\msf{H}_\lambda)\md\mu(\lambda)$. Theorem \ref{tw1} and Proposition \ref{stw6} allow us to do the same for $\delta^{it},\nabla_\vp^{it},\nabla_\psi^{it}\,(t\in\RR)$ -- operators obtained in this way are not decomposable.
\end{remark}

\subsection{Quantum group $az+b$}
In this section we will describe some aspects of the theory of the quantum $az+b$ group. We begin by introducing a complex number $q$ and an abelian group $\Gamma_q\subseteq \CC^{\times}$. We will consider three cases:
\begin{enumerate}[label=\arabic*)]
\item $q=e^{\frac{2\pi i}{N}}$ for a natural number $N\in 2\NN\setminus\{2\}$ and $\Gamma_q=\{q^k r\,|\, k\in\ZZ,r\in\RR_{>0}\}$,
\item $q$ is a real number in $\left]0,1\right[$ and $\Gamma_q=\{q^{i\theta+k}\,|\, \theta\in\RR,k\in\ZZ\}$,
\item $q=e^{\frac{1}{\rho}}$, where $\operatorname{Re}(\rho)<0,\operatorname{Im}(\rho)=\frac{N}{2\pi}$ and $N\in 2\ZZ\setminus\{0\}$. In this case \\$\Gamma_q=\{e^{\frac{k+it}{\rho} }\,|\, k\in\ZZ,t\in\RR\}$.
\end{enumerate}

It will be more convenient for us to work in the dual picure\footnote{In fact, $\GG$ is isomorphic to the quantum group opposite to quantum $az+b$.}: let $\whG$ be the quantum $az+b$ group associated with the parameter $q$. We refer the reader to papers \cite{Woronowiczazb, Soltanazb, WoronowiczHaar} for construction of these groups, here we will recall only necessary properties.\\
We treat all three cases simultaneously. The group $\Gamma_q$ has closure given by $\ov{\Gamma}_q=\Gamma_q\cup\{0\}$ and is selfdual. This duality is implemented by a certain bicharacter $\chi\colon \Gamma_q\times\Gamma_q\rightarrow \TT$. We choose a Haar measure on $\Gamma_q$ in such a way that the Fourier transform $\mc{F}(f)(\gamma)=\int_{\Gamma_q} \chi(\gamma,\gamma')f(\gamma')\md\mu(\gamma')$ is a unitary operator on $\LL^2(\Gamma_q)$. Next, the group $\Gamma_q$ acts on $\mathrm{C}_0(\ov{\Gamma}_q)$ by translations: $\sigma_\gamma(f)(\gamma')=f(\gamma\gamma')\,(f\in \mathrm{C}_0(\ov{\Gamma}_q),\gamma\in\Gamma_q,\gamma'\in\ov{\Gamma}_q)$. Let $\mathrm{C}_0(\ov{\Gamma}_q)\rtimes_\sigma \Gamma_q\subseteq\B(\LL^2(\Gamma_q))$ be the associated crossed product \cst-algebra (note that since $\Gamma_q$ is abelian, the reduced crossed product is universal). It turns out that the \cst-algebra $\CGD$ is isomorphic to the crossed product $\mathrm{C}_0(\ov{\Gamma}_q)\rtimes_\sigma \Gamma_q$. Furthermore, it is known that $\whG$ is coamenable. Indeed, it was pointed in \cite{Soltanazb,SoltanBohr}. It follows from an easy observation that the universal property of $\mathrm{C}_0(\ov{\Gamma}_q)\rtimes_\sigma \Gamma_q$ together with the trivial representation of $\Gamma_q$ and the character $\mathrm{C}_0(\ov{\Gamma}_q)\ni f \mapsto f(0)\in \CC$ give rise to a character of $\mathrm{C}_0(\ov{\Gamma}_q)\rtimes_\sigma \Gamma_q\simeq \CGD$. Then \cite[Theorem 3.1]{coamenability} implies that $\whG$ is coamenable.\\

One easily checks that the quotient space $\ov{\Gamma}_q/\Gamma_q$ consists of two points and is not antidiscrete. Consequently, \cite[Proposition 7.30]{Williams} implies that $\GG$ is second countable and type I. Using \cite[Theorem 8.39]{Williams} one can describe the spectrum of $\CGD\simeq \mathrm{C}_0(\ov{\Gamma}_q)\rtimes_\sigma \Gamma_q$: there is a family of one dimensional representations indexed by $\widehat{\Gamma}_q$ and one faithful irreducible infinite dimensional representation given by the inclusion into $\B(\LL^2(\Gamma_q))$. Denote this representation by $\pi$. 

\begin{proposition}
The Plancherel measure of $\GG$ equals the Dirac measure at $\pi$, a representation corresponding to the inclusion $\pi\colon \CGD\xrightarrow[]{\simeq} \mathrm{C}_0(\ov{\Gamma}_q)\rtimes_\sigma\Gamma_q\hookrightarrow \B(\LL^2(\Gamma_q))$.
Consequently we have $\mc{Q}_L,\mc{Q}_R\colon \LdG\rightarrow \HS(\LL^2(\Gamma_q))$.
\end{proposition}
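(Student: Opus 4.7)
The plan is to pin down the Plancherel measure through the factor/center correspondence supplied by Desmedt's theorem. Specifically, Theorem \ref{PlancherelL}(5) says that $\mc{Q}_L$ intertwines the center $\Linfd\cap\Linfd'$ with the diagonalisable operators on $\int^{\oplus}_{\IrrG}\HS(\msf{H}_\sigma)\,\md\mu(\sigma)$, which is the algebra $\LL^{\infty}(\IrrG,\mu)$. Hence showing that $\mu$ is a Dirac mass reduces to showing that $\Linfd$ is a factor, and identifying the support then amounts to ruling out the one-dimensional characters.

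The first step is to show that $\Linfd$ is a type $\mathrm{I}_\infty$ factor. Because $\whG$ is coamenable, the reducing morphism is an isomorphism and $\Linfd$ is the $\swot$-closure of the natural action of $\CGD\simeq\mathrm{C}_0(\ov{\Gamma}_q)\rtimes_\sigma\Gamma_q$ on $\LdG$. The translation action of $\Gamma_q$ on $\ov{\Gamma}_q$ is free and ergodic outside the single Haar-null point $\{0\}$, so the associated von Neumann crossed product $\Linf(\ov{\Gamma}_q)\rtimes_\sigma\Gamma_q$ is a type $\mathrm{I}_\infty$ factor, canonically isomorphic to $\B(\LL^2(\Gamma_q))$ and faithfully represented on this space via $\pi$. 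One then identifies $\Linfd$ on $\LdG$ with this factor by invoking the description of the Haar weight on the $az+b$ quantum group from \cite{WoronowiczHaar, Soltanazb}: the GNS representation of $\hvp$ puts $\Linfd$ in standard form, and unfolding the construction of $\hvp$ exhibits the generators of $\mathrm{C}_0(\ov{\Gamma}_q)\rtimes_\sigma\Gamma_q$ acting on $\LdG$ in the same way as on $\LL^2(\Gamma_q)$, up to multiplicity coming from the standard form.

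Once $\Linfd$ is known to be a factor, Theorem \ref{PlancherelL}(5) forces $\LL^{\infty}(\IrrG,\mu)=\CC$, so $\mu$ is concentrated on a single atom. This atom cannot lie in $\widehat{\Gamma}_q$, because a one-dimensional $\sigma$ contributes $\B(\msf{H}_\sigma)\otimes\I=\CC$ to the direct integral $\mc{Q}_L\Linfd\mc{Q}_L^*=\int^{\oplus}\B(\msf{H}_\sigma)\otimes\I_{\ov{\msf{H}_\sigma}}\,\md\mu(\sigma)$, producing an abelian direct summand and contradicting the type $\mathrm{I}_\infty$ status. Hence $\mu$ is supported on $\{\pi\}$; since the triple $(\mu,D_\pi,\mc{Q}_L)$ is unique only up to rescaling $D_\pi$ by a positive scalar, after absorbing this scalar we obtain $\mu=\delta_\pi$. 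The identification of the codomains of $\mc{Q}_L$ and $\mc{Q}_R$ with $\HS(\LL^2(\Gamma_q))$ is then immediate, since $\msf{H}_\pi=\LL^2(\Gamma_q)$ and $\mc{Q}_R$ is obtained from $\mc{Q}_L$ via the relation $\mc{Q}_R^*\mc{Q}_L=\nu^{-\frac{i}{8}}J_{\hvp}J_\vp$ of Corollary \ref{wn3}.

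The main obstacle is the identification of $\Linfd$ as $\B(\LL^2(\Gamma_q))$: the crossed product analysis makes it plausible at the algebraic level, but reconciling the abstract von Neumann algebra with its concrete action on $\LdG$ requires careful use of the construction of the Haar weight in the cited literature. Everything else is a routine application of the Plancherel machinery and of the facts that $\whG$ is coamenable, type I, and second countable, which are all established before the statement.
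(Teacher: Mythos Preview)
Your strategy is correct and essentially matches the paper's treatment of the case $q\in\left]0,1\right[$: there the paper cites \cite{DaeleHaar} to conclude that $\Linfd$ is isomorphic to $\B(\ell^2(\ZZ))$, and then invokes Proposition~\ref{stw7} (the amplified form of Theorem~\ref{PlancherelL}(5)) to force the Plancherel measure to be a point mass at $\pi$. Your crossed-product argument is a direct substitute for that citation, and the step you flag as delicate---identifying $\Linfd$ with the von Neumann crossed product---is exactly what the reference to \cite{DaeleHaar} supplies.

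For the other two cases ($q$ a root of unity or $q=e^{1/\rho}$ with $\operatorname{Re}\rho<0$), the paper takes a different and shorter route that you may find instructive. Here $\nu=|q^{-4i}|\neq 1$, and Corollary~\ref{wn1} (a consequence of the commutation relation $\nu^{ist}D_\sigma^{2is}E_\sigma^{2it}=E_\sigma^{2it}D_\sigma^{2is}$ established in Proposition~\ref{stw8}) forces $D_\sigma,E_\sigma$ to have empty point spectrum for almost every $\sigma$, so finite-dimensional representations carry zero Plancherel mass. Since $\pi$ is the unique infinite-dimensional irreducible, the conclusion follows without computing $\Linfd$ at all. Your uniform factor argument still works in these cases, but the scaling-constant trick bypasses the need to unpack the Haar weight construction.
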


\begin{proof}
It is observed in \cite{WoronowiczHaar} that we have $\hpsi\circ \tau^{\whG}_t=|q^{-4it}| \hpsi$ for all $t\in\RR$, hence the scaling constant of $\GG$ equals $\nu={\hat{\nu}}^{-1}=|q^{-4i}|$. 
In the first and the third case $q$ is not real and it follows that $\nu$ is nontrivial. Corollary \ref{wn1} implies that the set of one dimensional representations is of measure zero, and the claim follows\footnote{We remark that it was already observed in \cite{DaeleHaar} that in the first case, $\Linfd$ is isomorphic to the algebra of bounded operators on a separable Hilbert space.}. Let us now consider the second case, i.e.~$q\in \left]0,1\right[$.
It is argued in \cite[Section 5, Proposition A.3]{DaeleHaar} that the von Neumann algebra $\Linfd$ is isomorphic to the von Neumann algebra $\M$ associated with a pair $(a,b)$ of admissible normal operators (see \cite[Definition 5.1]{DaeleHaar}). Moreover, up to an isomorphism $\M$ does not depend on the choice of $(a,b)$, in particular we can take a pair $(a,b)$ introduced in \cite[Proposition 5.2]{DaeleHaar}. In this case one easily sees that the resulting von Neumann algebra equals the whole $\B(\ell^2(\ZZ))$. In particular it is a factor, hence Proposition \ref{stw7} implies that the Plancherel measure of $\GG$ must be the Dirac measure at $\pi$.
\end{proof}

Now we turn to the problem of identifying operators $D_\pi,E_\pi$. To simplify the notation, we will call these operators respectively $D$ and $E$. Let us start with introducing two normal (unbounded) operators on $\LL^2(\Gamma_q)$: $a$ and $b$. Operator $b$ acts by multiplication: $(b f)(\gamma)=\gamma f(\gamma)\,(f\in \Dom(b),\gamma\in\Gamma_q)$ and has the obvious domain. The second operator $a$ can be defined as $a=\mc{F} b \mc{F}^*$.\\
Note that there exists an isomorphism of von Neumann algebras $\Phi_R \colon \Linfd \rightarrow \B(\LL^2(\Gamma_q))$ induced by $\mc{Q}_R J_{\hvp} J_{\vp}$, such that $\Phi_R(x)=\pi(x)$ for $x\in\CGD$ (see Theorem \ref{PlancherelR} and Proposition \ref{stw7}). Under this isomorphism, the right Haar integral $\hpsi$ is transformed to $\Tr(E^{-1} \cdot E^{-1})$ -- it follows from the construction of the Plancherel measure in \cite{Desmedt}. On the other hand, we have $\hpsi(x)=\Tr(|b| \pi(x) |b|)$ for all $x\in \CGD^+$ (\cite[Theorem 3.1]{WoronowiczHaar}). This means that the weights $\Tr(E^{-1}\cdot E^{-1}),\,\Tr(|b| \cdot |b|)$ are equal on $\Phi_R(\CGD)$. Let $\theta$ be the restriction of these weights to $\Phi_R(\CGD)$. The modular automorphism group of $\Tr(E^{-1}\cdot E^{-1})$ is given by $\sigma_t^{\Tr_{E^{-1}}}(A)=E^{-2it} A E^{2it}$, similarly $\sigma^{\Tr_{|b|}}_t(A)=|b|^{2it} A |b|^{-2it}\,(A\in \B(\LL^2(\Gamma_q)),t\in\RR)$. Next, the weight $\theta$ satisfies the KMS condition for both groups $(\sigma_t^{\Tr_{E^{-1}}}|_{\Phi_R(\CGD)})_{t\in\RR}$ and $(\sigma_t^{\Tr_{|b|}}|_{\Phi_R(\CGD)})_{t\in\RR}$ and as this weight is faithful, \cite[Corollary 6.35]{KustermansKMS} implies $E^{-2it} A E^{2it} = |b|^{2it} A |b|^{-2it}$ for all $A\in \Phi_R(\CGD),t\in\RR$. By the $\swot$ density of $\Phi_R(\CGD)$ in $\B(\LL^2(\Gamma_q))$ we get $ E=c |b|^{-1}$ for some $c>0$. Equality $\Tr(E^{-1}\cdot E^{-1})=\Tr( |b|\cdot |b|)$ on $\Phi_R(\CGD)$ forces $c=1$ and consequently $E=|b|^{-1}$.\\
The next step is to identify the operator $D$. Observe that Lemma \ref{lemat19} implies $f(\pi)=1$. Recall (\cite[Section 6.2]{Soltanazb}, \cite[Equation 3.18]{Woronowiczazb}) that operator $a^{-1} \circ b$ is closable and its closure $a^{-1}b$ is normal. Moreover, we have $R^{\whG}(\pi^{-1}(b))=\pi^{-1}(-q a^{-1}b)$. If we combine this information together with Corollary \ref{wn4} and the equality $E=|b|^{-1}$ we arrive at
\[\begin{split}
&\quad\;
\mc{Q}_L^* ( D^{2it} \otimes \I_{\ov{\LL^2(\Gamma_q)}})\mc{Q}_L=
R^{\whG}( \mc{Q}_L^* (
E^{2it}\otimes\I_{\ov{\LL^2(\Gamma_q)}} )\mc{Q}_L)=
R^{\whG}( \mc{Q}_L^* (
|b|^{-2it}\otimes\I_{\ov{\LL^2(\Gamma_q)}} )\mc{Q}_L)\\
&=
\mc{Q}_L^*( |-qa^{-1} b|^{-2it}\otimes \I_{\ov{\LL^2(\Gamma_q)}} )\mc{Q}_L=
\mc{Q}_L^*( |qa^{-1} b|^{-2it}\otimes \I_{\ov{\LL^2(\Gamma_q)}} )\mc{Q}_L,
\end{split}\]
which implies $D=|qa^{-1} b|^{-1}$.

\begin{proposition}
We have $D=|qa^{-1} b|^{-1}$ and $E=|b|^{-1}$.
\end{proposition}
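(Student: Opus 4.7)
The plan is to identify $E$ first and then obtain $D$ from $E$ by invoking the unitary antipode $R^{\whG}$. Since the Plancherel measure is the Dirac mass at the single infinite-dimensional representation $\pi$, the fields $(D_\pi),(E_\pi)$ reduce to two single positive selfadjoint operators $D,E$ on $\LL^2(\Gamma_q)$, which simplifies the bookkeeping considerably.

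To pin down $E$, I would transport the right Haar weight $\hpsi$ along the isomorphism $\Phi_R\colon \Linfd\to \B(\LL^2(\Gamma_q))$ induced by $\mc{Q}_R J_{\hvp}J_\vp$. By the construction of $\mc{Q}_R$ in Theorem \ref{PlancherelR} the image of $\hpsi$ equals $\Tr(E^{-1}\cdot E^{-1})$, whereas Woronowicz's explicit formula $\hpsi(x)=\Tr(|b|\pi(x)|b|)$ from \cite{WoronowiczHaar} realizes the same weight as $\Tr(|b|\cdot|b|)$ on the $\swot$-dense C$^*$-subalgebra $\Phi_R(\CGD)$. Both weights agree on this subalgebra, so the uniqueness statement of the KMS theorem (in the form of \cite[Corollary 6.35]{KustermansKMS}) forces the two modular groups $\operatorname{Ad}(E^{-2it})$ and $\operatorname{Ad}(|b|^{2it})$ to coincide on $\Phi_R(\CGD)$. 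Taking $\swot$-closures gives $E^{-2it}|b|^{-2it}\in \mc{Z}(\B(\LL^2(\Gamma_q)))=\CC\I$ for every $t$, hence $E=c|b|^{-1}$ for some $c>0$; comparing the two trace normalizations finally fixes $c=1$.

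For $D$, I would use the fourth equation of Corollary \ref{wn4}, which relates $J_\vp\mc{Q}_L^*(E^{2it}\otimes\I)\mc{Q}_L J_\vp$ to $\mc{Q}_L^*(f(\pi)^{it}D^{2it}\otimes\I)\mc{Q}_L$. Because the Plancherel measure is a point mass, $f$ reduces to a single positive real number, and Lemma \ref{lemat19} applied to itself (the formula must be an involution) forces $f(\pi)=1$. Identifying the conjugation $J_\vp(\cdot)^*J_\vp$ on $\Linfd$ with the unitary antipode $R^{\whG}$, this yields
\[
\mc{Q}_L^*(D^{2it}\otimes\I)\mc{Q}_L=R^{\whG}\bigl(\mc{Q}_L^*(E^{2it}\otimes\I)\mc{Q}_L\bigr).
\]
Plugging in $E=|b|^{-1}$ and applying the structural identity $R^{\whG}(\pi^{-1}(b))=\pi^{-1}(-qa^{-1}b)$ from \cite{Woronowiczazb} together with Borel functional calculus, the right hand side becomes $\mc{Q}_L^*(|qa^{-1}b|^{-2it}\otimes\I)\mc{Q}_L$, and the conclusion $D=|qa^{-1}b|^{-1}$ follows.

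The main technical hurdle is justifying that $R^{\whG}$ intertwines with the Borel functional calculus of the closed normal operator $a^{-1}b$; this rests on the closability and normality results of \cite[Section 6.2]{Soltanazb} together with \cite[Equation 3.18]{Woronowiczazb}. A secondary but more delicate point is ensuring that $f(\pi)=1$ rather than merely a unimodular constant: this follows because $f$ takes values in $\RR_{>0}$ by construction, and the involutive character of the identity in Lemma \ref{lemat19} forces $f^2=1$, hence $f=1$.
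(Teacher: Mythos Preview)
Your proposal is correct and follows essentially the same route as the paper: first identify $E$ by comparing the transported weight $\Tr(E^{-1}\cdot E^{-1})$ with Woronowicz's formula $\Tr(|b|\cdot|b|)$ via KMS uniqueness, then obtain $D$ from the fourth equation of Corollary~\ref{wn4} together with $f(\pi)=1$ (from Lemma~\ref{lemat19}) and the formula $R^{\whG}(\pi^{-1}(b))=\pi^{-1}(-qa^{-1}b)$. The paper's argument for $f(\pi)=1$ is exactly the one you describe: in the single-point case Lemma~\ref{lemat19} reduces to $J_\vp f(\pi)^{-it}J_\vp=f(\pi)^{-it}$, and antilinearity of $J_\vp$ forces $f(\pi)^{2it}=1$ for all $t$.
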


\section{Appendix}

\begin{lemma}\label{lemat2}
Let $\msf{H}$ be a Hilbert space and $J\colon \msf{H}\otimes \ov{\msf{H}}\rightarrow \msf{H}\otimes\ov{\msf{H}}$ an antilinear map given by $J\colon \xi\otimes\ov\eta\mapsto\eta\otimes\ov\xi\;(\xi,\eta\in\msf{H})$.
\begin{enumerate}[label=\arabic*)]
\item If $x,y\in\B(\msf{H})$ are unitaries such that $x\otimes(y^*)^{\msf T}=y\otimes (x^*)^{\msf T}\in\B(\msf{H}\otimes\ov{\msf{H}})$, then operators $xy^*,y^*x$ are selfadjoint.
\item Let $(a_t)_{t\in \RR},(b_t)_{t\in\RR}$ be families of unitary operators on $\msf{H}$. Define $c_t=a_t\otimes b_t^{\msf T}\,(t\in\RR)$. If $(b_t)_{t\in\RR}$ and $(c_t)_{t\in\RR}$ are strongly continuous groups, then $(a_t)_{t\in\RR}$ is also a strongly continuous group.
\item Let $(a_t)_{t\in \RR},(b_t)_{t\in\RR}$ be strongly continuous groups of unitary operators on $\msf{H}$. Define $c_t=a_t\otimes b_t^{\msf T}\,(t\in\RR)$. If $J c_t=c_t J$ for all $t\in\RR$ then $a_t=b_{-t}\,(t\in\RR)$.
\end{enumerate}
\end{lemma}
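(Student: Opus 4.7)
For part $1)$, the plan is to exploit uniqueness of the factorisation of a nonzero elementary tensor: from $x\otimes (y^*)^{\msf T}=y\otimes (x^*)^{\msf T}$ there must exist $\lambda\in\CC^{\times}$ with $x=\lambda y$ and $(y^*)^{\msf T}=\lambda^{-1}(x^*)^{\msf T}$. A direct check using $A^{\msf T}\ov\xi=\ov{A^*\xi}$ shows that $(\mu A)^{\msf T}=\mu A^{\msf T}$ for every scalar $\mu$ (the complex conjugation hidden in $A^*$ cancels the complex conjugation from $\ov{\msf{H}}$), so $x^*=\bar\lambda y^*$ gives $(x^*)^{\msf T}=\bar\lambda(y^*)^{\msf T}$. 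Plugging this into $(y^*)^{\msf T}=\lambda^{-1}(x^*)^{\msf T}$ produces $\lambda^{-1}\bar\lambda=1$, so $\lambda\in\RR$; combined with $|\lambda|=1$ (from unitarity of $x,y$), this forces $\lambda=\pm 1$. Hence $x=\pm y$ and both $xy^*$ and $y^*x$ equal $\pm\I$, which is selfadjoint.

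For part $2)$, I would first derive the group law for $(a_t)_{t\in\RR}$ algebraically. A short computation shows $b_s^{\msf T}b_t^{\msf T}=(b_t b_s)^{\msf T}=b_{s+t}^{\msf T}$, so $c_s c_t=(a_s a_t)\otimes b_{s+t}^{\msf T}$; comparing with $c_{s+t}=a_{s+t}\otimes b_{s+t}^{\msf T}$ and using that $b_{s+t}^{\msf T}\neq 0$ gives $a_{s+t}=a_s a_t$. For strong continuity, fix a unit vector $\eta_0\in\ov{\msf{H}}$ and observe that $(b_t^{\msf T})_{t\in\RR}$ is strongly continuous because $(b_t^*)_{t\in\RR}$ is. For any $\xi\in\msf{H}$ and $s\in\RR$ the identity
\[
c_t(\xi\otimes\eta_0)-c_s(\xi\otimes\eta_0)=(a_t\xi-a_s\xi)\otimes b_t^{\msf T}\eta_0+a_s\xi\otimes(b_t^{\msf T}\eta_0-b_s^{\msf T}\eta_0)
\]
combined with strong continuity of $(c_t)$ and of $(b_t^{\msf T})$ (and $\|b_t^{\msf T}\eta_0\|=1$) yields $\|a_t\xi-a_s\xi\|\to 0$ as $t\to s$.

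For part $3)$, the cleanest route is the canonical identification $\msf{H}\otimes\ov{\msf{H}}\simeq\HS(\msf{H})$ sending $\xi\otimes\ov\eta$ to $\sima{\xi}{\eta}$. Under this identification $c_t$ acts by $T\mapsto a_t T b_t$, while the antiunitary $J$ is precisely the adjoint operation $T\mapsto T^*$. The relation $Jc_t=c_t J$ therefore becomes $(a_t T b_t)^*=a_t T^* b_t$ for every $T\in\HS(\msf{H})$, equivalently $b_t^* S a_t^*=a_t S b_t$ for all $S$. Multiplying on the left by $b_t$ and on the right by $a_t$ yields $(b_t a_t)S(b_t a_t)=S$ for every $S\in\HS(\msf{H})$. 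A rank-one test (the only $u\in\B(\msf{H})$ with $uSu=S$ for all $S$ are $u=\pm\I$) forces $b_t a_t=\epsilon_t\I$ with $\epsilon_t\in\{+1,-1\}$, and joint strong continuity of $(a_t),(b_t)$ with $b_0 a_0=\I$ pins down $\epsilon_t=1$ for all $t$, giving $a_t=b_t^{-1}=b_{-t}$.

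The main conceptual obstacle throughout is bookkeeping for the antilinear transposition $\cdot^{\msf T}$ and the conjugate-Hilbert-space conventions. In part $1)$, it is the identity $(\mu A)^{\msf T}=\mu A^{\msf T}$ (not $\bar\mu A^{\msf T}$) that produces the crucial real-scalar constraint $\lambda=\bar\lambda$. In part $3)$, one must correctly identify $J$ with the HS-adjoint operation — only with this identification does the commutation collapse to the algebraic equation $(b_t a_t)S(b_t a_t)=S$. Once these two points are handled, the remaining arguments are routine.
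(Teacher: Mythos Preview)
Your proof is correct in all three parts, but parts $1)$ and $3)$ follow a genuinely different route from the paper.

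For $1)$, the paper works via the Hilbert--Schmidt identification: the hypothesis reads $xSy^*=ySx^*$ for all $S\in\HS(\msf{H})$, and approximating $\I$ by Hilbert--Schmidt operators yields $xy^*=yx^*$ directly. Your argument instead invokes uniqueness of elementary-tensor factorisations in $\B(\msf{H})\otimes\B(\ov{\msf{H}})$ and the linearity of $A\mapsto A^{\msf T}$ to force $x=\pm y$; this is more elementary and in fact proves a strictly stronger conclusion ($xy^*=\pm\I$ rather than merely selfadjoint).

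For $3)$, the paper takes a longer path: it first shows via a commutator-type computation that $a_{-s}b_ta_sb_{-t}=\lambda_{t,s}\I$, then appeals to part $1)$ to obtain selfadjointness of $a_tb_t$ and $b_ta_t$, and finally uses continuity of $t\mapsto\lambda_{t,-t}\in\{\pm 1\}$ to conclude. Your approach is cleaner: after identifying $J$ with the adjoint on $\HS(\msf{H})$ and $c_t$ with $T\mapsto a_tTb_t$, the commutation relation collapses immediately to $(b_ta_t)S(b_ta_t)=S$ for all $S$, whence $b_ta_t=\pm\I$ by a rank-one test, and continuity finishes. This bypasses the auxiliary two-parameter family $\lambda_{t,s}$ and does not need part $1)$ at all. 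Part $2)$ is essentially the same in both treatments.
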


\begin{proof}
$1)$ Equality from the assumption gives us $x S y^*=ySx^*$ for all $S\in\HS(\msf{H})$. We can approximate the unit by Hilbert-Schmidt operators hence $xy^*=yx^*$, i.e.~$xy^*$ is selfadjoint. Multiplying this equation from the left by $x^*$ and from the right by $x$ gives us $y^*x=x^*y$, i.e.~$y^*x$ is selfadjoint.\\
$2)$
For all $t,s\in\RR$ we have $
a_{t+s}\otimes b_{t+s}^{\msf T}=c_{t+s}=c_t c_s=a_t a_s \otimes b_t^{\msf T} b_s^{\msf T}$ hence $a_{t+s}=a_ta_s$, i.e.~$(a_t)_{t\in\RR}$ is a group. Equation $a_t\otimes\I_{\ov{\msf{H}}}=c_t (\I_{\msf{H}}\otimes b_{-t}^{\msf T})$ implies that $\RR\ni t\mapsto a_t\in\B(\msf{H})$ is strongly continuous.\\
$3)$ We have $Jc_tJ=b_{-t}\otimes a_{-t}^{\msf T}$ for all $t\in\RR$. Consequently, for  $s,t\in\RR,x\in\B(\msf{H})$ we have
\[\begin{split}
&\quad\;
c_s J c_t J (x\otimes\I_{\ov{\msf{H}}}) J c_{-t} J c_{-s}=
(a_s b_{-t}\otimes b_s^{\msf T} a_{-t}^{\msf T})(x\otimes\I_{\ov{\msf{H}}})
(b_t a_{-s}\otimes a_{t}^{\msf T} b_{-s}^{\msf T})= 
a_s b_{-t} x b_t a_{-s}\otimes \I_{\ov{\msf{H}}},
\end{split}\]
and on the other hand
\[\begin{split}
&\quad\;
J c_t J c_s (x\otimes \I_{\ov{\msf{H}}}) c_{-s} J c_{-t} J=
(b_{-t} a_s\otimes a_{-t}^{\msf T} b_s^{\msf T})(x\otimes\I_{\ov{\msf{H}}})
(a_{-s} b_t\otimes b_{-s}^{\msf T}a_t^{\msf T})=
b_{-t} a_s x a_{-s} b_t \otimes \I_{\ov{\msf{H}}},
\end{split}\]
hence
\[
a_s b_{-t} x b_t a_{-s}=b_{-t} a_s x a_{-s} b_t\quad\quad\Rightarrow\quad\quad
a_{-s} b_t a_s b_{-t} x =
xa_{-s} b_t a_s b_{-t}.
\]
The above equation holds for all $x\in\B(\msf{H})$, hence there exists $\lambda_{t,s}\in \CC$ such that $a_{-s} b_t a_s b_{-t} =\lambda_{t,s}\I_{\msf{H}}$ and consequently $
a_{-s} b_t =\lambda_{t,s} b_t a_{-s}\,(t,s\in\RR)$.
Clearly we have $|\lambda_{t,s}|=1$. Since $
a_t\otimes b_t^{\msf T}=c_t = JJc_t=Jc_t J=b_{-t}\otimes (a_{-t})^{\msf T},
$
for all $t\in\RR$, the first point implies that $a_t b_t,b_ta_t$ are selfadjoint. For $s=-t$ we have $a_{t} b_t=\lambda_{t,-t} b_t a_t$, and since $a_tb_t,b_ta_t$ are selfadjoint we have $\lambda_{t,-t}\in\RR\cap \TT=\{-1,1\}$. As the function $t\mapsto \lambda_{t,-t}\in\RR$ is continuous and $\lambda_{0,0}=1$, we have $\lambda_{t,-t}=1$ for all $t\in\RR$. Consequently $
b_ta_t=a_tb_t=(a_tb_t)^*=b_{-t} a_{-t}$
and $b_{2t}=a_{-2t}\quad(t\in\RR).$
\end{proof}

\section*{Acknowledgements}
The author would like to express his gratitude towards Piotr M.~Sołtan for many helpful discussions and suggestions.\\
The author was partially supported by the Polish National Agency for
the Academic Exchange, Polonium grant PPN/BIL/2018/1/00197, FWO–PAS
project VS02619N: von Neumann algebras arising from quantum symmetries and NCN (National Centre of Science) grant 2014/14/E/ST1/00525.

\bibliographystyle{plain}
\bibliography{bibliografia}
\end{document}